\numberwithin{equation}{section}
\numberwithin{figure}{section}
\theoremstyle{plain}
\newtheorem{thm}{\protect\theoremname}
  \theoremstyle{definition}
  \newtheorem{defn}[thm]{\protect\definitionname}
  \theoremstyle{plain}
  \newtheorem{prop}[thm]{\protect\propositionname}
  \theoremstyle{remark}
  \newtheorem{rem}[thm]{\protect\remarkname}
  \theoremstyle{plain}
  \newtheorem{lem}[thm]{\protect\lemmaname}
  \theoremstyle{plain}
  \newtheorem{cor}[thm]{\protect\corollaryname}
\date{\today}
  \providecommand{\corollaryname}{Corollary}
  \providecommand{\definitionname}{Definition}
  \providecommand{\lemmaname}{Lemma}
  \providecommand{\propositionname}{Proposition}
  \providecommand{\remarkname}{Remark}
\providecommand{\theoremname}{Theorem}
\begin{document}

\title{Stochastic Scalar conservation laws driven by rough paths}
\begin{abstract}
We prove the existence and uniqueness of solutions to a class of stochastic scalar conservation laws with joint space-time transport noise and affine-linear noise driven by a geometric $p$-rough path. In particular, stability of the solutions with respect to the driving rough path is obtained, leading to a robust approach to stochastic scalar conservation laws. As immediate corollaries we obtain support theorems, large deviation results and the generation of a random dynamical system. 
\end{abstract}

\author{Peter K. Friz}

\address{Institut für Mathematik \\
Technische Universität Berlin \\
\&\\
Weierstrass\textendash{}Institut für Angewandte Analysis und Stochastik\\
Berlin, Germany\\
}

\email{friz@math.tu-berlin.de}

\author{Benjamin Gess}

\address{Department of Mathematics \\
University of Chicago \\
USA }

\email{gess@math.tu-berlin.de}

\keywords{Stochastic scalar conservation laws, rough paths, random dynamical systems, stability, Kružkov entropy solutions.}

\subjclass[2000]{H6015, 35R60, 35L65.}

\thanks{\textbf{Acknowledgements}: P.K.F. has received funding from the European Research Council under the European Union's Seventh Framework Programme (FP7/2007-2013) / ERC grant agreement nr. 258237. B.G. has been partially supported by the RTG 1845 ``Stochastic Analysis with Applications in Biology, Finance and Physics'' and by the research project ``Random dynamical systems and regularization by noise for stochastic partial differential equations'' funded by the German Research Foundation. }

\maketitle

\section{Introduction}

We develop a rough path approach to a class of stochastic scalar conservation laws of the type
\begin{align}
du+\Div f(t,x,u)dt & =F(t,x,u)+\sum_{k=1}^{N}\L_{k}(x,u,\nabla u)\circ d\b_{t}^{k},\label{eq:full_smooth_noise-intro}\\
u(0) & =u_{0},\nonumber 
\end{align}
on $[0,T]\times\R^{d}$, where $f,F$ are continuous, $\L_{k}=\L_{k}(x,r,p)$ is affine-linear in $r,p$, that is
\[
\L_{k}(x,r,p)=p\cdot H_{k}(x)+r\nu_{k}+g_{k}(x)
\]
and $\b^{k}$ are real-valued Brownian motions. More generally, we will give meaning to \eqref{eq:full_smooth_noise-intro} when $\b$ is replaced by a general geometric $p$-rough path $\mathbf{z}$. The Stratonovich type solution to \eqref{eq:full_smooth_noise-intro} is then obtained by applying this to Brownian motion enhanced to a rough path. Further justification for the Stratonovich notation in \eqref{eq:full_smooth_noise-intro} is provided by a Wong-Zakai type limit theorem which becomes an immediate consequence of our main Theorem \ref{thm:RP_limit} (part iii) together with well-known rough paths convergence of piecewise linear (and many other) approximations to (enhanced) Brownian motion. For background on rough paths we refer to \cite{L98,LQ02,LCL07,FV10,HF12}. Roughly speaking the main results reads
\begin{thm}
\label{thm:RP_limit-1}Given sufficient regularity of $u_{0}$,$f$,$F$,$\L_{k}$ there exists a unique solution to
\begin{align}
du+\Div f(t,x,u)dt & =F(t,x,u)+\sum_{k=1}^{N}\L_{k}(x,u,\nabla u)\circ d\mathbf{z}_{t}^{k},\label{eq:full_smooth_noise-intro-1}\\
u(0) & =u_{0},\nonumber 
\end{align}
for every geometric rough path $\mathbf{z}$, in the following sense: There exists a unique $u=u^{\mathbf{z}}\in L^{\infty}([0,T]\times\R^{d})$ such that for every sequence $z^{n}\in C^{1}([0,T])$ with $z^{n}\to\mathbf{z}$ in rough path metric the (unique) weak entropy solutions to 
\[
\partial_{t}u^{n}+\Div f(t,x,u^{n})=F(t,x,u^{n})+\sum_{k=1}^{N}\L_{k}(x,u^{n},\nabla u^{n})\dot{z}_{t}^{n,k}
\]
converge to $u$ in in $L^{\infty}([0,T];L_{loc}^{1}(\R^{d}))$. The solution map $(\mathbf{z},u_{0})\mapsto u^{\mathbf{z}}$ is continuous in appropriate norms.
\end{thm}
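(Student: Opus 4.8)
The plan is to remove the rough driver by an explicit, affine-in-$u$ change of variables, turning \eqref{eq:full_smooth_noise-intro-1} into a deterministic scalar conservation law whose coefficients depend pathwise and continuously on $\mathbf{z}$, and then to conclude from Kruzhkov's $L^{1}$-theory together with continuity of the It\^o--Lyons map.

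\textbf{Step 1 (characteristics and transformation).} For a $C^{1}$ driver $z$ let $\psi_{t}=\psi^{z}_{t}$ be the ODE flow of $\dot\psi=-\sum_{k}H_{k}(\psi)\,\dot z^{k}$, $\psi_{0}=\mathrm{id}$, with Jacobian $J_{t}=\det D\psi_{t}>0$, set $E_{t}=\exp(\sum_{k}\nu_{k}z^{k}_{t})$, and let $c^{z}_{t}$ be the Duhamel correction solving, pathwise in $y$, the linear ODE $\partial_{t}(c_{t}\circ\psi_{t})=\sum_{k}\nu_{k}\,(c_{t}\circ\psi_{t})\,\dot z^{k}+\sum_{k}(g_{k}\circ\psi_{t})\,\dot z^{k}$, $c_{0}=0$. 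Under the regularity assumed on the $H_{k}$ (in the $\mathrm{Lip}^{\gamma}$ sense with $\gamma>p$, plus one more derivative for the Jacobian flow) the universal limit theorem extends $\psi,D\psi,\psi^{-1},J,c$ to the rough level and makes $\mathbf{z}\mapsto(\psi^{\mathbf{z}},D\psi^{\mathbf{z}},(\psi^{\mathbf{z}})^{-1},J^{\mathbf{z}},c^{\mathbf{z}})$ continuous from $p$-rough paths into the relevant spaces of flows of $C^{1}$-diffeomorphisms. Define the affine transformation $u=\Gamma^{z}_{t}w$ with inverse $w(t,y)=E_{t}^{-1}\big(u(t,\psi_{t}(y))-c^{z}_{t}(\psi_{t}(y))\big)$; note $\psi_{0}=\mathrm{id}$, $E_{0}=1$, $c_{0}=0$, so $w(0)=u_{0}$.

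\textbf{Step 2 (transformed equation and preservation of entropy solutions).} A direct computation shows that, for $C^{1}$ drivers, $u$ is the weak entropy solution of \eqref{eq:full_smooth_noise-intro-1} if and only if $w$ is the weak entropy solution of a deterministic conservation law
\[
\partial_{t}w+\Div\!\big(\tilde f^{\,z}(t,y,w)\big)=\tilde F^{\,z}(t,y,w),\qquad w(0)=u_{0},
\]
where $\tilde f^{\,z},\tilde F^{\,z}$ are assembled from $f,F,\psi,D\psi,J,E,c^{z}$: the transport term $H_{k}\!\cdot\!\nabla u\,\dot z^{k}$ is cancelled by the time-dependence of $\psi_{t}$, the reaction $\nu_{k}u\,\dot z^{k}$ by $E_{t}$, the source $g_{k}\,\dot z^{k}$ by $c^{z}_{t}$, and the divergence structure survives up to the Jacobian since, $\psi_{t}$ being a diffeomorphism, $(\Div_{x}V)\circ\psi_{t}=J_{t}^{-1}\Div_{y}\big(\mathrm{cof}(D\psi_{t})\,(V\circ\psi_{t})\big)$. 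For fixed $\mathbf{z}$ the coefficients $\tilde f^{\,\mathbf z},\tilde F^{\,\mathbf z}$ inherit the regularity required by the Kruzhkov theory for $x$-dependent fluxes, and $\mathbf{z}\mapsto(\tilde f^{\,\mathbf z},\tilde F^{\,\mathbf z})$ is continuous into, say, $C_{loc}$. One checks that the Kruzhkov entropy inequalities transform exactly: since $\Gamma^{z}_{t}$ is affine in $w$ with positive factor $E_{t}$ and $\psi_{t}$ is a diffeomorphism with $J_{t}>0$, the functions $|\cdot-c|$ and $\mathrm{sgn}(\cdot-c)$ are respected up to these positive factors, and the $x$-dependent corrector in the entropy inequality for $u$ transforms into the one associated with $\tilde f^{\,z}$.

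\textbf{Step 3 (deterministic well-posedness, stability, conclusion).} For each fixed $\mathbf{z}$ the transformed problem has a unique entropy solution $w^{\mathbf z}\in L^{\infty}\cap C([0,T];L^{1}_{loc})$, with the $L^{1}$-contraction/comparison estimate (doubling of variables) bounding $\|w^{\mathbf z}-w^{\tilde{\mathbf z}}\|_{C([0,T];L^{1}(B_{R}))}$ by $\|u_{0}-\tilde u_{0}\|_{L^{1}}$ plus moduli of $\|\tilde f^{\,\mathbf z}-\tilde f^{\,\tilde{\mathbf z}}\|_{C(\overline{B_{R}}\times[-M,M])}$ and of the corresponding difference of the $\tilde F$'s. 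Hence $z^{n}\to\mathbf{z}$ in rough-path metric forces $\tilde f^{\,z^{n}}\to\tilde f^{\,\mathbf z}$, $\tilde F^{\,z^{n}}\to\tilde F^{\,\mathbf z}$ locally uniformly and $w^{z^{n}}\to w^{\mathbf z}$ in $C([0,T];L^{1}_{loc})$. Define $u^{\mathbf z}:=\Gamma^{\mathbf z}w^{\mathbf z}$. By Step 2 the approximating entropy solutions satisfy $u^{n}=\Gamma^{z^{n}}w^{z^{n}}$, and since $\Gamma^{z^{n}}\to\Gamma^{\mathbf z}$ (Step 1) while $w^{z^{n}}\to w^{\mathbf z}$ (just shown), $u^{n}\to u^{\mathbf z}$ in $L^{\infty}([0,T];L^{1}_{loc})$, independently of the chosen sequence; this gives existence, and uniqueness of $u^{\mathbf z}$ with the stated property is immediate since any such limit must equal $\Gamma^{\mathbf z}w^{\mathbf z}$ along one sequence. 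Continuity of $(\mathbf{z},u_{0})\mapsto u^{\mathbf z}$ is the composition of the continuous maps $\mathbf{z}\mapsto(\psi^{\mathbf z},\dots)$, $(\text{coefficients},u_{0})\mapsto w^{\mathbf z}$ (via $L^{1}$-contraction), and $(\mathbf{z},w)\mapsto\Gamma^{\mathbf z}w$.

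\textbf{Main difficulty.} The heart of the argument is Step 2: carrying out the change of variables cleanly enough that (i) \emph{all} $\dot z$-terms genuinely cancel -- which forces the flow $\psi^{\mathbf z}$ and its spatial Jacobian to exist and depend continuously on $\mathbf{z}$, whence the regularity hypotheses on $H_{k}$ and the restriction to geometric rough paths -- and (ii) the notion of Kruzhkov entropy solution is preserved \emph{exactly} under $\Gamma^{\mathbf z}$, including the behaviour of the genuinely $x$-dependent flux $f(t,x,u)$ after composition with $\psi_{t}$ and the attendant corrector terms in the entropy inequalities. Everything else is either classical PDE ($L^{1}$-contraction and well-posedness for conservation laws with $x$-dependent flux) or standard rough path theory (continuity of the It\^o--Lyons map and of its Jacobian flow), used in a modular way; in particular the corollaries on support, large deviations and random dynamical systems then follow from the continuity of $(\mathbf{z},u_{0})\mapsto u^{\mathbf z}$ by transfer from the corresponding statements for (enhanced) Brownian motion.
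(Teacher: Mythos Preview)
Your strategy is exactly the paper's: remove the noise by the affine change of variables built from the flow $\psi^{\mathbf z}$ of $-H$, the exponential factor $E_t$, and the Duhamel-type integral $c^{\mathbf z}$ (the paper's $\psi,\phi,\varrho$), obtain a deterministic conservation law with $\mathbf z$-dependent coefficients, and conclude by combining rough-path continuity of the flows with an $L^1$-stability estimate for entropy solutions under perturbation of the flux.

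Two points you treat as routine are in fact the technical heart of the paper. First, the stability estimate in Step~3 is \emph{not} the standard Kruzhkov $L^1$-contraction (same flux); it is a flux-stability estimate of the type in \cite{LM11}, which involves $\TV(u_0)$ and requires the global integrability condition $(H2)$. The transformed coefficients $\tilde f^{\mathbf z},\tilde F^{\mathbf z}$ fail $(H2)$ (e.g.\ $\nabla\varrho$ is only locally bounded), so the paper must develop a localized version (Theorem~\ref{thm:loc_entropy_stability}) to make your Step~3 go through. Second, the uniform $L^\infty$ bound on $w^{z^n}$ is not free: the bound inherited from $u^{z^n}$ via $(H3)$ depends on $\|g\dot z^{n}\|_\infty$ and blows up. The paper instead derives the bound directly from the robust equation (Lemma~\ref{lem:linfty_bound} applied to $\tilde f^{\mathbf z}$), which is why $\nu$ is taken constant and, when $g\not\equiv 0$, $\|f''\|_\infty<\infty$ is assumed. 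You should flag both as genuine ingredients rather than off-the-shelf facts. A minor structural difference: you keep a general Jacobian $J_t$ via the Piola identity, whereas the paper assumes $\operatorname{div}H=0$ throughout, so $J_t\equiv 1$ and the divergence transformation reduces to Proposition~\ref{prop:div_transf}; your formulation is more general in principle but introduces extra terms that the paper never has to handle.
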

As immediate benefits of taking a rough paths approach to stochastic scalar conservation laws and the resulting continuity of the solution map $ $$(\mathbf{z},u_{0})\mapsto u^{\mathbf{z}}$ one obtains support results, large deviation results and the generation of a random dynamical system as simple consequences (cf. \cite{FO13,CFO11} for details). Moreover, we should note that the range of driving signals covered by Theorem \ref{thm:RP_limit-1} goes far beyond Brownian motion. In particular, this includes fractional Brownian motion with Hurst parameter $H\in(\frac{1}{4},\frac{1}{2})$. 

In the construction of solutions we combine stability results from the theory of rough paths with stability of weak entropy solutions to space-time inhomogeneous scalar conservation laws. Due to the irregularity of the driving rough path $\mathbf{z}$, the coefficients of the corresponding inhomogeneous scalar conservation laws only satisfy little regularity (especially in the time variable) and related stability results have only recently been developed in \cite{LM11} in an $L^{1}$ framework. In order to combine such stability estimates with the $L^{\infty}$-stability estimates from rough paths theory we prove localized versions of the estimates derived in \cite{LM11}, thus leading to an $L_{loc}^{1}$ stability theory applicable to the situation at hand.

In the case of pure transport noise, i.e. 
\begin{align}
du+\Div f(u)dt & =\sum_{k=1}^{N}\L_{k}(x,\nabla u)\circ d\mathbf{z}_{t}^{k},\label{eq:rough_transport-1}\\
u(0) & =u_{0}\nonumber 
\end{align}
with $\L_{k}(x,p)=p\cdot H_{k}(x)$ we derive a rate on the convergence $u^{n}\to u$ proven in Theorem \ref{thm:RP_limit-1}. Roughly speaking, as a second main result we obtain
\begin{thm}
\label{thm:rate-1}For two rough paths \textbf{$\mathbf{z}^{1},\mathbf{z}^{2}$} let $u^{1},u^{2}$ be the corresponding solutions to \eqref{eq:rough_transport-1} with initial data $u_{0}^{1},u_{0}^{2}$ respectively. Then 
\begin{align*}
\sup_{t\in[0,T]}\|u^{1}(t)-u^{2}(t)\|_{L^{1}(\R^{d})}\le & \|u_{0}^{1}-u_{0}^{2}\|_{L^{1}(\R^{d})}+K\TV(u_{0}^{1})\rho(\mathbf{z}^{1},\mathbf{z}^{2}),
\end{align*}
where $K$ can be chosen locally uniformly with respect to $\mathbf{z}^{1},\mathbf{z}^{2}$ in rough path metric $\rho$.
\end{thm}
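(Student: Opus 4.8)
\emph{Proof plan.} The plan is to remove the rough driver by the method of rough characteristics, reducing \eqref{eq:rough_transport-1} to a deterministic, space-time inhomogeneous scalar conservation law whose coefficients are built from the flow of $\mathbf{z}$; the rate then follows by combining the locally Lipschitz dependence of this flow on $\mathbf{z}$ (continuity of the It\^{o}--Lyons map) with a quantitative $L^{1}$ flux-and-coefficient stability estimate for such conservation laws.

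For $i=1,2$, let $\Phi^{i}_{t}$ be the flow of diffeomorphisms solving the rough differential equation $d\Phi=-\sum_{k}H_{k}(\Phi)\,d\mathbf{z}^{i,k}_{t}$, $\Phi^{i}_{0}=\mathrm{id}$, which is well defined under the standing regularity assumptions on the $H_{k}$. Setting $v^{i}(t,y):=u^{i}(t,\Phi^{i}_{t}(y))$, the transport term cancels and $v^{i}$ is the weak entropy solution of
\begin{equation*}
\partial_{t}v^{i}+\Div_{y}\!\big(b^{i}(t,y)\,f(v^{i})\big)=\big(\Div_{y}b^{i}(t,y)\big)\cdot f(v^{i}),\qquad v^{i}(0)=u_{0}^{i},
\end{equation*}
with $b^{i}(t,y):=\big(D\Phi^{i}_{t}(y)\big)^{-1}$ and $b^{i}(0,\cdot)=\mathrm{Id}$; that a Lipschitz-in-space change of variables maps entropy solutions to entropy solutions is part of the construction underlying Theorem \ref{thm:RP_limit-1}. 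The time regularity of $b^{i}$ is only that inherited from $\mathbf{z}^{i}$, which is precisely the regime addressed by the localized refinement of \cite{LM11} used in this paper.

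Next I would use that $\Phi^{i}$, its inverse $(\Phi^{i})^{-1}$, the Jacobian $D\Phi^{i}$ (which itself solves a linear RDE) and their first spatial derivatives all depend locally Lipschitz continuously on the driving rough path, so that, on a fixed $\rho$-bounded set of rough paths,
\begin{equation*}
\sup_{t\in[0,T]}\Big(\|b^{1}_{t}-b^{2}_{t}\|_{W^{1,\infty}}+\|(\Phi^{1}_{t})^{-1}-(\Phi^{2}_{t})^{-1}\|_{\infty}\Big)\le K\,\rho(\mathbf{z}^{1},\mathbf{z}^{2}).
\end{equation*}
Feeding this into the $L^{1}$ coefficient-stability estimate for the above inhomogeneous conservation laws (the localized \cite{LM11}-type bound, whose right-hand side is linear in the $BV$ seminorm of the data) together with the propagation bound $\sup_{t\in[0,T]}\TV(v^{1}(t))\le C\,\TV(u_{0}^{1})$, with $C$ locally uniform in $\mathbf{z}^{1}$, yields
\begin{equation*}
\sup_{t\in[0,T]}\|v^{1}(t)-v^{2}(t)\|_{L^{1}}\le\|u_{0}^{1}-u_{0}^{2}\|_{L^{1}}+C\,\TV(u_{0}^{1})\,\sup_{t\in[0,T]}\|b^{1}_{t}-b^{2}_{t}\|_{W^{1,\infty}},
\end{equation*}
and hence, by the previous display, the right-hand side is $\le\|u_{0}^{1}-u_{0}^{2}\|_{L^{1}}+K\,\TV(u_{0}^{1})\,\rho(\mathbf{z}^{1},\mathbf{z}^{2})$.

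Finally I would transform back. From $u^{i}(t,\cdot)=v^{i}(t,(\Phi^{i}_{t})^{-1}(\cdot))$ and
\begin{equation*}
u^{1}(t)-u^{2}(t)=\big(v^{1}(t)-v^{2}(t)\big)\circ(\Phi^{2}_{t})^{-1}+\Big(v^{1}(t)\circ(\Phi^{1}_{t})^{-1}-v^{1}(t)\circ(\Phi^{2}_{t})^{-1}\Big),
\end{equation*}
the first summand has $L^{1}$ norm $\le C\,\|v^{1}(t)-v^{2}(t)\|_{L^{1}}$ (bounded Jacobian of $\Phi^{2}_{t}$), and the second $\le\TV(v^{1}(t))\,\|(\Phi^{1}_{t})^{-1}-(\Phi^{2}_{t})^{-1}\|_{\infty}\le K\,\TV(u_{0}^{1})\,\rho(\mathbf{z}^{1},\mathbf{z}^{2})$; collecting the terms gives the claim, with $K$ locally uniform in $\rho$ throughout. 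The main difficulty I anticipate is the third step: establishing the quantitative, \emph{localized} $L^{1}$ flux-and-coefficient stability estimate, with the sharp linear dependence on $\TV(u_{0}^{1})$, for scalar conservation laws whose coefficients are merely of finite $p$-variation in time, together with the uniform-in-time $BV$ bound for $v^{1}$; once these are in hand, the remaining steps are routine applications of rough path continuity and of $BV$ estimates under Lipschitz changes of variables.
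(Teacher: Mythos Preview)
Your overall strategy is exactly the paper's: transform via the rough flow, apply the (localized) \cite{LM11}-type $L^{1}$ stability estimate at the level of the transformed solutions $v^{i}$, then transform back and control the extra term by a $BV$ bound. A few comments.

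First, a minor point: under the standing hypothesis $\div H=0$ the flow $\Phi^{i}_{t}$ is volume preserving, so your source term $(\Div_{y}b^{i})\cdot f(v^{i})$ vanishes identically (this is Proposition~\ref{prop:div_transf}); hence $\kappa^{*}=0$ and the Jacobian factor in the first back-transformed term is exactly $1$, which is what gives the sharp coefficient $1$ in front of $\|u_{0}^{1}-u_{0}^{2}\|_{L^{1}}$.

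The genuine gap is in the back-transformation. You assert
\[
\big\|v^{1}(t)\circ(\Phi^{1}_{t})^{-1}-v^{1}(t)\circ(\Phi^{2}_{t})^{-1}\big\|_{L^{1}}\le \TV\big(v^{1}(t)\big)\,\big\|(\Phi^{1}_{t})^{-1}-(\Phi^{2}_{t})^{-1}\big\|_{\infty}
\]
and call this a routine $BV$-under-change-of-variables estimate. It is not. The standard $BV$ translation bound $\|w(\cdot+h)-w\|_{L^{1}}\le|h|\,\TV(w)$ is for \emph{constant} $h$. For two diffeomorphisms $\phi_{1},\phi_{2}$ one writes, for smooth $w$,
\[
w(\phi_{1}(x))-w(\phi_{2}(x))=\int_{0}^{1}\nabla w\big(\lambda\phi_{1}(x)+(1-\lambda)\phi_{2}(x)\big)\cdot(\phi_{1}(x)-\phi_{2}(x))\,d\lambda,
\]
and to pass to $\TV(w)\,\|\phi_{1}-\phi_{2}\|_{\infty}$ one needs the interpolated map $x\mapsto\lambda\phi_{1}(x)+(1-\lambda)\phi_{2}(x)$ to be a diffeomorphism with Jacobian bounded below. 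This fails in general when $\|\phi_{1}-\phi_{2}\|_{C^{1}}$ is not small, and here $\rho(\mathbf{z}^{1},\mathbf{z}^{2})$ need not be small---only bounded. The paper resolves this by exploiting the \emph{time} structure: setting $\Phi_{t}=(\psi^{\mathbf{z}^{2}}_{t})^{-1}\circ\psi^{\mathbf{z}^{1}}_{t}$ (so $\Phi_{0}=\mathrm{Id}$), telescoping over a partition $0=t_{0}<\cdots<t_{N}=t$ with mesh chosen via the H\"older regularity $\|\Phi\|_{C^{1/p\text{-H\"ol}}([0,T];C^{1})}\le K\rho(\mathbf{z}^{1},\mathbf{z}^{2})$ so that $\|D(\Phi_{t_{i+1}}-\Phi_{t_{i}})\|_{\infty}\le\tfrac12$, and then checking that the convex combinations $\lambda\Phi_{t_{i+1}}+(1-\lambda)\Phi_{t_{i}}$ are genuine diffeomorphisms with $\det\ge\tfrac12$. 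Together with an intermediate-convergence approximation of $v^{1}(t)\in BV$ by smooth functions (to make $\int|\nabla v^{1,n}|\to\TV(v^{1}(t))$) this yields the desired bound. This argument is the technical core of the rate estimate, not a bookkeeping step; your plan needs to incorporate it.
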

As it is well-known, scalar conservation laws of the general type \eqref{eq:full_smooth_noise-intro} do not belong to the class of (fully-)nonlinear PDE that may be treated by the theory of viscosity solutions. In particular, \eqref{eq:full_smooth_noise-intro} is out of reach of the results developed in \cite{CF09,CFO11,FO13,LS98,LS98-2,LS00,LS00-2}. Notably, our results are based on the notion of weak entropy solutions to \eqref{eq:full_smooth_noise-intro} rather than viscosity solutions. We should also point out that \eqref{eq:full_smooth_noise-intro} is of quasilinear type, so that the methods developed in \cite{DGT12,GLS12} and applicable to semilinear SPDE do not apply.

Many works have been devoted to the study of stochastic and random scalar conservation laws. Noise entering scalar conservation laws via randomness in the initial condition has been studied for example in \cite{AE95,S92-2,R98,B74}. For stochastic scalar conservation laws driven by additive noise, also including boundary value problems, we refer to \cite{N82,EKMS00,SS12,K03,VW09} and the references therein. The case of multiplicative noise, i.e. SPDE of the form
\[
du+\partial_{x}f(u)dt=g(x,u)dW_{t},
\]
has attracted considerable interest in recent years (cf. e.g. \cite{HR97,FN08,DV10,H13,CDK12,BVW13,DHV13}). All of the above mentioned works consider \textit{semilinear} stochastic scalar conservation laws in the sense that the diffusion coefficients do not depend on the derivative(s) of the solution. In contrast, in the recent works \cite{LPS12,LPS13} stochastic perturbations of the flux $f$ are considered, which in general leads to SPDE of the type
\[
du=\sum_{k=1}^{N}\partial_{k}f_{k}(u)\circ d\b_{t}^{k}
\]
and well-posedness to such SPDE is proven by a kinetic approach. This corresponds to \eqref{eq:full_smooth_noise-intro} with nonlinear, spatially homogeneous $\L_{k}(x,r,p)=f'_{k}(r)p_{k}$. We emphasize that for the results obtained in \cite{LPS13} it is crucial that the random flux $\L=(\L_{k})_{k=1}^{N}$ is spatially homogeneous (i.e. does not depend on $x$), which would correspond to $H=(H_{k})_{k=1}^{N}$ being a constant matrix in our framework \eqref{eq:full_smooth_noise-intro}. Very recently, in the case of one driving Brownian motion, i.e.
\begin{equation}
du=\sum_{k=1}^{N}\partial_{k}f_{k}(x,u)\circ d\b_{t},\label{eq:SSCL_single_BM}
\end{equation}
where $\b$ is a real-valued Brownian motion, a generalization of the results from \cite{LPS13} to the spatially dependent case has been obtained in \cite{LPS14}. Due to the restriction to one-dimensional noise no rough paths techniques are required to handle \eqref{eq:SSCL_single_BM}.

\subsection{Notation}

We will now very briefly recall the elements of rough paths theory used in this paper. For more details we refer to \cite{FV10}. Let $T^{N}(\R^{d})=\R\oplus\R^{d}\oplus(\R^{d}\otimes\R^{d})\oplus\ldots\oplus(\R^{d})^{\otimes N}$ be the truncated step-$N$ tensor algebra. For paths in $T^{N}(\R^{d})$ starting at the fixed point $e:=1+0+\ldots+0$, one may define $\beta$-Hölder and $p$-variation metrics, extending the usual metrics for paths in $\R^{d}$ starting at zero: The \textit{homogeneous} $\beta$-Hölder and $p$-variation metrics will be denoted by $\ensuremath{d_{\beta-\text{Höl}}}$
 resp. $d_{p-\text{var}}$, the \textit{inhomogeneous} ones by $\ensuremath{\rho_{\beta-\text{Höl}}}$ resp. $\rho_{p-\text{var}}$ respectively. Note that both $\beta$-Hölder and $p$-variation metrics induce the same topology on the path spaces. Corresponding norms are defined by 
$\ensuremath{\|\cdot\|_{\beta-\text{Höl}}=d_{\beta-\text{Höl}}(\cdot,0)}$ and $\|\cdot\|_{p-\text{var}}=d_{p-\text{var}}(\cdot,0)$ where $0$ denotes the constant $e$-valued path.

A geometric $\beta$-Hölder rough path $\mathbf{x}$ is a path in $T^{\lfloor1/\beta\rfloor}(\R^{d})$ which can be approximated by lifts of smooth paths in the $\ensuremath{d_{\beta-\text{Höl}}}$ metric; geometric $p$-rough paths are defined similarly. Given a rough path $\mathbf{x}$, the projection on the first level is an $\R^{d}$-valued path and will be denoted by $\pi_{1}(\mathbf{x})$. It can be seen that rough paths actually take values in the smaller set $G^{N}(\R^{d})\subset T^{N}(\R^{d})$, where $G^{N}(\R^{d})$ denotes the free step-$N$ nilpotent Lie group with $d$ generators. The Carnot-Caratheodory metric turns $(G^{N}(\R^{d}),d)$ into a metric space. Consequently, we denote by 
\begin{align*} C^{0,\beta-\text{H\"ol}}_0(I,G^{\lfloor 1/\beta \rfloor}(\R^d))\quad \text{ and} \quad C^{0,p-\text{var}}_0(I,G^{\lfloor p \rfloor}(\R^d)) \end{align*} the rough paths spaces where $\beta\in(0,1]$ and $p\in[1,\infty)$. Note that both spaces are Polish spaces.

\section{Definitions and Notation\label{sec:def_not}}

For a matrix $A=(a_{i,j})_{i,j=1,...,d}$ we write $A_{i}^{j}=a_{i,j}$, $A^{j}=(a_{i,j})_{i=1,...,d}$ and $A_{i}=(a_{i,j})_{j=1,...,d}^{t}$. Let $H=(H^{1},...,H^{N_{1}})$ be a collection of $C^{1}(\R^{d};\R^{d})$ vector fields. We define
\[
\div H:=(\div H^{1},...,\div H^{N_{1}})
\]
and assume  $\div H=0$. In the following we let $\Div$ denote the total divergence, i.e. for a vector-valued function $f=f(x,u)\in C^{1}(\R^{d}\times\R)$ and for $u\in C^{1}(\R^{d})$ we set 
\[
\Div f(x,u)=(\div f)(x,u)+(\partial_{u}f)(x,u)\cdot\nabla u,
\]
while $\div f(x,u)=\sum_{k=1}^{d}(\partial_{x_{k}}f^{k})(x,u)$. Moreover, we let $\nabla f$ denote the partial gradient, that is $\nabla f(x,u)=\left((\partial_{x_{i}}f^{j})(x,u)\right)_{i,j,=1,\dots,d}$. For all $R,M>0$, $t\in[0,T]$, $x_{0}\in\R^{d}$ we define time-space cones by
\[
K_{R,M}(t,x_{0}):=\{(r,x)|\ x\in B_{R+M(t-r)}(x_{0})\}.
\]
We let $C^{k}(\R^{d})$ be the usual spaces of $k$-times continuously differentiable functions on $\R^{d}$ and let $C_{b}^{k}(\R^{d})$ denote the subset of bounded functions. Analogously, we define $\Lip_{b}^{\g}$ to be the bounded $\g$-Lipschitz continuous functions.

\subsection{Definition of a weak entropy solution}

The replacement of Brownian motion in \eqref{eq:full_smooth_noise-intro} by a continuously differentiable path $z$ leads us to the study of the following evolution equation
\begin{align}
\partial_{t}u+\Div f(t,x,u) & =F(t,x,u)+\nabla u\cdot H(x)\dot{z}_{t}^{1}+u\nu(t,x)\dot{z}_{t}^{2}+g(t,x)\dot{z}_{t}^{3}\label{eqn:full_noise}\\
u(0) & =u_{0}\in L^{\infty}(\R^{d})\nonumber 
\end{align}
on $[0,T]\times\R^{d}$ with $f,F$  continuous, $d,N_{1},N_{2},N_{3}\in\N$,
\begin{align*}
z=(z^{1},z^{2},z^{3})\in & C^{1}([0,T];\R^{N_{1}+N_{2}+N_{3}}),\\
H\in & (C_{b}^{2}\cap\Lip)(\R^{d};\R^{d\times N_{1}}),\\
\nu\in & C^{0}([0,T];(C_{b}^{2}\cap\Lip)(\R^{d};\R^{N_{2}})),\\
g\in & C^{0}([0,T];(C_{b}^{2}\cap\Lip)(\R^{d};\R^{N_{3}}))
\end{align*}
 and assuming $\div(H)=0$. Since (informally)
\[
\nabla u\cdot H\dot{z}_{t}^{1}=\Div(uH\dot{z}_{t}^{1}),
\]
we may rewrite \eqref{eqn:full_noise} as
\begin{equation}
\partial_{t}u+\Div\td f(t,x,u)=\td F(t,x,u)\label{eqn:stoch_3-2}
\end{equation}
with%
\begin{comment}
\$\$ f\_1(t,x,u) = f(t,x,u) - uH \textbackslash{}dot z, \textbackslash{}quad F\_1(t,x,u) = F(t,x,u) - u \textbackslash{}dot z \textbackslash{}div H.\$\$
\end{comment}
{} 
\begin{align}
\td f(t,x,u) & =f(t,x,u)-uH(x)\dot{z}_{t}^{1}\label{eq:f_td}\\
\td F(t,x,u) & =F(t,x,u)+u\nu(t,x)\dot{z}_{t}^{2}+g(t,x)\dot{z}{}_{t}^{3}.\nonumber 
\end{align}
Thus, \eqref{eqn:full_noise} may be rewritten in terms of an inhomogeneous scalar conservation law
\begin{align}
\partial_{t}u+\Div f(t,x,u) & =F(t,x,u)\label{eq:inhomogen_SCL}\\
u(0) & =u_{0}\in L^{\infty}(\R^{d})\nonumber 
\end{align}
for which the well-developed deterministic theory of entropy solutions and their stability may be applied, provided $z\in C^{1}([0,T];\R^{N_{1}+N_{2}+N_{3}})$. The removal of this regularity assumption on the driving signal $z$ is the main point of this paper.
\begin{defn}
\label{def:entropy_for_full_noise}We call $u\in L^{\infty}([0,T]\times\R^{d};\R)$ a weak entropy solution to \eqref{eq:inhomogen_SCL}  if
\begin{enumerate}
\item For all $k\in\R$, $\vp\in C_{c}^{\infty}((0,T)\times\R^{d};\R_{+})$ 
\begin{align*}
\int_{0}^{T}\int_{\R^{d}} & |u-k|\partial_{t}\vp+\sgn(u-k)(f(t,x,u)-f(t,x,k))\nabla\vp\\
 & +\sgn(u-k)(F(t,x,u)-\div f(t,x,k))\vp dxdt\ge0.
\end{align*}

\item There exists a zero set $\mcE\subseteq[0,T]$ such that for $t\in[0,T]\setminus\mcE$ the function $u(t,x)$ is defined for a.e. $x\in\R^{d}$ and for all $r>0$ 
\[
\lim_{t\to0,t\in\R_{+}\setminus\mcE}\int_{B_{r}(0)}|u(t,x)-u_{0}(x)|dx=0.
\]

\end{enumerate}
Moreover, a function $u$ is said to be a weak entropy solution to \eqref{eqn:full_noise} if $u$ is a weak entropy solution to \eqref{eqn:stoch_3-2}
\end{defn}
As concerning the well-posedness of \eqref{eqn:full_noise} we will work with the following set of assumptions

\begin{hypothesis}\label{hyp:H0-H2}
\begin{enumerate}
\item [($H1$)] $f,F$ are continuous, $\partial_{u}f,\partial_{u}\nabla f,\nabla^{2}f,\partial_{u}F,\nabla F$ exist continuously and
\begin{align*}
\partial_{u}f & \in L^{\infty}([0,T]\times\R^{d}\times[-U,U]),\\
F-\div f,\partial_{u}(F-\div f) & \in L^{\infty}([0,T]\times\R^{d}\times[-U,U]),
\end{align*}
 for all $U,T>0$.
\item [($H2$)] For all $U,T>0$: $\nabla\partial_{u}f\in L^{\infty}([0,T]\times\R^{d}\times[-U,U])$, $\partial_{u}F\in L^{\infty}([0,T]\times\R^{d}\times[-U,U])$ and 
\[
\int_{0}^{T}\int_{\R^{d}}\|\nabla(F-\div f)(t,x,\cdot)\|_{L^{\infty}([-U,U])}dxdt<\infty.
\]

\item [($H2^*$)] For all $U,R,T>0$: $\nabla\partial_{u}f\in L^{\infty}([0,T]\times\R^{d}\times[-U,U])$, $\partial_{u}F\in L^{\infty}([0,T]\times\R^{d}\times[-U,U])$ and  
\[
\int_{0}^{T}\int_{B_{R}(0)}\|\nabla(F-\div f)(t,x,\cdot)\|_{L^{\infty}([-U,U])}dxdt<\infty.
\]

\item [($H3$)] $(\div f-F)(\cdot,\cdot,0)\in L^{\infty}([0,T]\times\R^{d})$ and $\partial_{u}(\div f-F)\in L^{\infty}([0,T]\times\R^{d}\times\R)$.
\end{enumerate}
\end{hypothesis}

We recall 
\begin{defn}
Let $u\in L_{loc}^{1}(\R^{d})$. Define
\begin{align*}
\TV(u) & =\sup{\left\{ \int_{\R^{d}}u\div\psi dx|\ \psi\in C_{c}^{1}(\R^{d};\R^{d})\text{ and }\|\psi\|_{\infty}\le1\right\} }\\
BV(\R^{d}) & =\{u\in L_{loc}^{1}(\R^{d})|\ \TV(u)<\infty\}.
\end{align*}

\end{defn}
From \cite{K70,LM11} and Appendix \ref{sec:app_det_SCL} we obtain
\begin{prop}
\label{prop:general well posedness}Let $u_{0}\in L^{\infty}(\R^{d})$.
\begin{enumerate}
\item Suppose that $f$, $F$ satisfy $(H1),(H2)$ and $u_{0}\in(L^{\infty}\cap L^{1}\cap BV)(\R^{d})$.  Then weak entropy solutions to \eqref{eq:inhomogen_SCL} are unique. 
\item Suppose that $f$, $F$ satisfy $(H1),(H3)$. Then there exists a weak entropy solution $u$ to \eqref{eq:inhomogen_SCL}. Moreover, $u$ may be chosen such that $t\mapsto u(t)$ is right-continuous in $L_{loc}^{1}(\R^{d})$.
\item Suppose that $f$, $F$ satisfy $(H1)$, $(H2^{*})$, $(H3)$ and $u_{0}\in(L^{\infty}\cap L^{1}\cap BV)(\R^{d})$. Then there exists a unique weak entropy solution to \eqref{eq:inhomogen_SCL}.
\end{enumerate}
\end{prop}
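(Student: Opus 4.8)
The plan is to read off parts (1) and (2) essentially from the literature — Kruzhkov's classical $BV$/$L^\infty$ theory \cite{K70} and the recent low-time-regularity stability theory of \cite{LM11} — and to obtain part (3) by a localization of the uniqueness estimate carried out in Appendix \ref{sec:app_det_SCL}.

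For existence (part (2)) I would run the vanishing-viscosity scheme: after mollifying $f,F$ in $x$ if necessary, solve the quasilinear parabolic regularization $\partial_t u^\varepsilon + \Div f(t,x,u^\varepsilon) = F(t,x,u^\varepsilon) + \varepsilon\Delta u^\varepsilon$. Hypothesis $(H3)$, namely $(\div f - F)(\cdot,\cdot,0)\in L^\infty$ together with $\partial_u(\div f - F)\in L^\infty$, gives via a Gronwall estimate on $\|u^\varepsilon(t)\|_{L^\infty}$ a bound uniform in $\varepsilon$. Multiplying the regularized equation by $\eta'(u^\varepsilon - k)$ for convex $\eta$ and letting $\varepsilon\to 0$ produces the Kruzhkov entropy inequalities of Definition \ref{def:entropy_for_full_noise}; the compactness needed to pass to the limit comes from a uniform $BV$ bound when $u_0\in BV$ and, in general (since here $u_0$ is merely in $L^\infty$), from a compensated-compactness / Panov-type strong-precompactness argument. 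The right-continuous representative is then extracted from the fact that, for each entropy pair, the distributional time derivative of $t\mapsto\int\eta(u(t,x)-k)\psi(x)\,dx$ is a measure, so $t\mapsto u(t)$ admits one-sided limits in $L^1_{loc}(\R^d)$ and one selects the right-continuous version.

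For uniqueness (part (1)) the difficulty is that the coefficients of \eqref{eq:inhomogen_SCL} are only of $L^1$-type in the time variable, so Kruzhkov's classical doubling of variables does not apply directly. I would instead follow the refined doubling-of-variables argument of \cite{LM11}: double both the space and the time variables, mollify in each, and track the resulting error terms. The key term, arising because the flux depends on $x$, is controlled precisely by $\int_0^T\int_{\R^d}\|\nabla(F-\div f)(t,x,\cdot)\|_{L^\infty([-U,U])}\,dx\,dt<\infty$ from $(H2)$, while the translation terms are controlled using that the $BV$-norm of $u_0$ propagates in time (again via $(H1)$, $(H2)$). This yields the $L^1$-contraction $\|u(t)-\tilde u(t)\|_{L^1(\R^d)}\le\|u_0-\tilde u_0\|_{L^1(\R^d)}$ up to an exponential factor coming from $\partial_u(F-\div f)$, whence uniqueness.

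For part (3) I would combine (1) and (2) with finite speed of propagation. Existence is immediate from part (2), since $(H1)$, $(H3)$ are assumed. For uniqueness, since $\partial_u\td f$ is bounded the equation has finite propagation speed, so the doubling-of-variables comparison of part (1) can be localized to the time-space cones $K_{R,M}(t,x_0)$: one uses test functions supported in such a cone, whereupon only the localized integrability $(H2^*)$ over balls enters rather than the global $(H2)$, and the boundary contributions produced on $\partial K_{R,M}$ have the correct sign thanks to the choice $M\ge\|\partial_u\td f\|_{L^\infty}$. Covering $\R^d$ by balls then upgrades this to $L^1_{loc}$-uniqueness, hence well-posedness. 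The delicate point — and the one I expect to be the main obstacle, which is precisely what is deferred to Appendix \ref{sec:app_det_SCL} — is this localization: verifying that every boundary term generated on $\partial K_{R,M}$ by the doubled-variable computation either vanishes or carries a favorable sign, so that a genuinely local $L^1$-estimate survives and the global integrability condition $(H2)$ can be replaced by its localized counterpart $(H2^*)$.
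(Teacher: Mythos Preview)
Your proposal is correct and matches the paper's strategy: part (i) is cited from \cite[Theorem 2.5]{LM11}, part (ii) from \cite{K70}, and part (iii) is deduced from the localized stability estimate of Appendix~\ref{sec:app_det_SCL}. Your elaboration of the underlying mechanisms (vanishing viscosity plus $(H3)$ for the $L^\infty$ bound, refined doubling of variables for uniqueness) is accurate.

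One point deserves comment. For part (iii) you propose that, by taking test functions supported in the cone $K_{R,M}(t,x_0)$, ``only the localized integrability $(H2^*)$ over balls enters rather than the global $(H2)$.'' This is not quite how the paper proceeds, and the reason is instructive: the doubling-of-variables estimate of \cite{LM11} involves the global quantity $\TV(u(t))$, whose control (Theorem~\ref{thm:BV-bound}) itself requires the \emph{global} integrability condition $(H2)$. Cone-supported test functions localize the comparison of $u$ and $v$, but do not by themselves localize the $\TV$ bound on $u$. The paper's Appendix therefore argues in two steps. First (Theorem~\ref{thm:loc_entropy_stability}(i)) it derives a localized $L^1$-estimate on balls, still under the global hypothesis $(H2)$, by the cone-test-function modification you describe. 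Then (Theorem~\ref{thm:loc_entropy_stability}(ii)) it passes from $(H2^*)$ to $(H2)$ by multiplying $f,F$ with a smooth cutoff $\eta^\varepsilon$ supported in a slightly enlarged cone; the resulting $(f^\varepsilon,F^\varepsilon)$ satisfy $(H2)$ globally (being compactly supported in $x$), so step (i) applies, and finite speed of propagation gives $u^\varepsilon\equiv u$ on $B_R(x_0)$. This cutoff-of-coefficients device is what actually circumvents the global $\TV$ obstruction, and is the substantive content hidden behind your deferral to the Appendix.
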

\begin{proof}
(i): Follows from \cite[Theorem 2.5]{LM11}. (ii): Proven in \cite{K70}. (iii): Follows from Theorem \ref{thm:loc_entropy_stability} in Appendix \ref{sec:app_det_SCL} below.
\end{proof}
For simplicity we will assume weak entropy solutions to be right-continuous in $L_{loc}^{1}(\R^{d})$. Due to Proposition \ref{prop:general well posedness} (ii) this does not restrict the applicability of our results.

Note that $(H2)$ for $f$, $F$ does not imply $(H2)$ for $\td f$, $\td F$ defined in \eqref{eq:f_td} while this is the case for $(H2^{*})$. In order to have well-posedness for \eqref{eqn:full_noise} it is thus important to work with the localized condition $(H2^{*})$ instead, as in Proposition \ref{prop:general well posedness} (ii).

\section{Transformation for smooth noise\label{sec:transformation}}

In this section we consider
\begin{align}
\partial_{t}u+\Div f(t,x,u) & =F(t,x,u)+\nabla u\cdot H(x)\dot{z}_{t}^{1}+u\nu(x)\dot{z}_{t}^{2}+g(x)\dot{z}_{t}^{3},\label{eq:full_smooth_noise}\\
u(0) & =u_{0},\nonumber 
\end{align}
on $[0,T]\times\R^{d}$ with $d,N_{1},N_{2},N_{3}\in\N$, $f,F$ satisfying $(H1)$, $(H2^{*})$, $(H3)$, 
\begin{align*}
z=(z^{1},z^{2},z^{3})\in & C^{1}([0,T];\R^{N_{1}+N_{2}+N_{3}}),\\
H\in & (C_{b}^{3}\cap\Lip)(\R^{d};\R^{d\times N_{1}}),\\
\nu,g\in & (C_{b}^{2}\cap\Lip)(\R^{d}),
\end{align*}
and $\div(H)=0$.

We emphasize that Proposition \ref{prop:general well posedness} fails when $z=(z^{1},z^{2},z^{3})$ ceases to be $C^{1}([0,T];\R^{N_{1}+N_{2}+N_{3}})$. In particular, the case of $z$ being Brownian motion is not covered. In the following we will show how to transform \eqref{eq:full_smooth_noise} into a scalar conservation law in ``robust'' form, which will in turn allow the development of a rough pathwise theory for \eqref{eq:full_smooth_noise}. The point is to find a view on \eqref{eq:full_smooth_noise} which (to the extend possible) does not involve derivatives of the driving noise $z$. 

In order to do so, we split the presentation into two parts, first dealing with pure transport noise $\nabla u\cdot H\dot{z}_{t}^{1}$ then with affine-linear noise $u\nu(x)\dot{z}_{t}^{2}+g(x)\dot{z}_{t}^{3}$. Finally, in Section \ref{sub:full_transf} below, both of these transformations will be applied to \eqref{eq:full_smooth_noise} to yield its robust form.

\subsection{Transport noise}

In this section we consider
\begin{equation}
\partial_{t}u+\Div f(t,x,u)=F(t,x,u)+\nabla u\cdot H(x)\dot{z}_{t}^{1},\label{eq:linear_mult_smooth}
\end{equation}
on $\R^{d}$ with $z^{1}\in C^{1}([0,T];\R^{N_{1}})$, $H\in(C_{b}^{2}\cap\Lip)(\R^{d};\R^{d\times N_{1}})$, $\div(H)=0$ and $f,F$ satisfying $(H1)$. Let $\psi$ be the flow of $C^{2}$-diffeomorphisms induced by 
\begin{align*}
\dot{\psi}_{t}(x) & =-H(\psi_{t}(x))\dot{z}_{t}^{1}\\
\psi_{0}(x) & =x.
\end{align*}
Note that $\psi_{t}$ is volume preserving, since $\div(H)=0$. We aim to transform \eqref{eq:linear_mult_smooth} into its ``robust'' form by setting $v(t,x)=u(t,\psi_{t}(x))$. In the context of viscosity solutions an analogous transformation has been studied for example in \cite{CFO11,LS98,FO13}. An informal computation reveals
\begin{equation}
\begin{split}\partial_{t}v(t,x)= & (\partial_{t}u)(t,\psi_{t}(x))+(\nabla u)(t,\psi_{t}(x))\cdot\partial_{t}\psi_{t}(x)\\
= & (-\Div f(t,x,u))(t,\psi_{t}(x))+F(t,x,u)(t,\psi_{t}(x))\\
 & +(\nabla u\cdot H)(t,\psi_{t}(x))\dot{z}_{t}^{1}+(\nabla u)(t,\psi_{t}(x))\cdot\partial_{t}\psi_{t}(x)\\
= & (-\Div f(t,x,u))(t,\psi_{t}(x))+F(t,\psi_{t}(x),v).
\end{split}
\end{equation}
 By Proposition \ref{prop:div_transf} at least for $u\in C^{1}(\R^{d})$ we have
\[
\left(\Div f(t,x,u)\right)(t,\psi_{t}(x))=\Div\left((D\psi_{t})^{-1}f(t,\psi_{t},u(t,\psi_{t}))\right)(t,x).
\]
 Hence,
\begin{equation}
\partial_{t}v(t,x)+\Div f^{\psi}(t,x,v)=F^{\psi}(t,x,v),\label{eqn:transf}
\end{equation}
 with
\begin{align*}
f^{\psi}(t,x,v) & =({D\psi_{t}^{-1}})_{|\psi_{t}(x)}f(t,\psi_{t}(x),v),\\
F^{\psi}(t,x,v) & =F(t,\psi_{t}(x),v).
\end{align*}
This informal calculation may be made rigorous
\begin{prop}
\label{prop:inner_transform}A function $u$ is a weak entropy solution to
\begin{equation}
\partial_{t}u+\Div f(t,x,u)=F(t,x,u)+\nabla u\cdot H(x)\dot{z}_{t}^{1},\label{eq:transport_1}
\end{equation}
iff $v(t,x)=u(t,\psi_{t}(x))$ is a weak entropy solution to 
\begin{equation}
\partial_{t}v(t,x)+\Div f^{\psi}(t,x,v)=F^{\psi}(t,x,v)\label{eq:transport_transformed}
\end{equation}
where
\begin{align*}
f^{\psi}(t,x,v) & =({D\psi_{t}^{-1}})_{|\psi_{t}(x)}f(t,\psi_{t}(x),v),\\
F^{\psi}(t,x,v) & =F(t,\psi_{t}(x),v).
\end{align*}
\end{prop}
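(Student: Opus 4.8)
The plan is to prove the equivalence at the level of the Kru\v{z}kov entropy inequalities, transporting them along the flow $\psi$. First I would unwind Definition~\ref{def:entropy_for_full_noise}: since $\div H=0$ we read $\nabla u\cdot H(x)\dot{z}_{t}^{1}=\Div(uH(x)\dot{z}_{t}^{1})$, so $u$ is a weak entropy solution of \eqref{eq:transport_1} exactly when, for all $k\in\R$ and all $\varphi\in C_{c}^{\infty}((0,T)\times\R^{d};\R_{+})$,
\begin{align*}
\int_{0}^{T}\int_{\R^{d}}|u-k|\partial_{t}\varphi & +\sgn(u-k)\big(f(t,x,u)-f(t,x,k)\big)\cdot\nabla\varphi\\
& -|u-k|\,H(x)\dot{z}_{t}^{1}\cdot\nabla\varphi+\sgn(u-k)\big(F(t,x,u)-\div f(t,x,k)\big)\varphi\,dx\,dt\ge0,
\end{align*}
where I used $\sgn(u-k)(u-k)=|u-k|$ and $\div(kH\dot{z}_{t}^{1})=0$; likewise $v$ is a weak entropy solution of \eqref{eq:transport_transformed} iff the analogous inequality, with $f^{\psi},F^{\psi},v$ in place of $f,F,u$ and no transport term, holds for all $k$ and all $\varphi$. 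The goal is to identify these two families of inequalities under the substitution $y=\psi_{t}(x)$.

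The second step is the change of test functions. Since $H$ is bounded and $z^{1}\in C^{1}$, the flow $\psi$ is $C^{1}$ in $t$, $C^{2}$ in space, volume preserving (as $\div H=0$), and displaces points a bounded distance over $[0,T]$; hence $\varphi\mapsto G$, $G(t,y):=\varphi(t,\psi_{t}^{-1}(y))$, is a bijection of $C_{c}^{1}((0,T)\times\R^{d};\R_{+})$ onto itself, with inverse $G\mapsto\big((t,x)\mapsto G(t,\psi_{t}(x))\big)$. I would first remark that, by a routine mollification argument, the entropy inequalities above, stated for $C_{c}^{\infty}$ test functions, hold if and only if they hold for all $C_{c}^{1}$ test functions---all coefficients involved ($|u-k|$, $\sgn(u-k)(f(t,x,u)-f(t,x,k))$, $H$, $\sgn(u-k)(F(t,x,u)-\div f(t,x,k))$) being locally integrable under $(H1)$ and $\|u\|_{\infty}<\infty$, so the inequality is continuous in $\varphi$ with respect to $C^{1}$ convergence with fixed compact support. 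It thus suffices to test against $C_{c}^{1}$ functions, and $G$ is always an admissible such test function (note that $\psi$ is in general not $C^{\infty}$, so one genuinely has to leave the class $C_c^\infty$).

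The core of the argument is the change of variables $y=\psi_{t}(x)$ in the $v$-inequality: $dx\,dt=dy\,dt$ by volume preservation, $v(t,\psi_{t}^{-1}(y))=u(t,y)$, and from $\partial_{t}\psi_{t}=-H(\psi_{t})\dot{z}_{t}^{1}$,
\[
(\partial_{t}\varphi)(t,\psi_{t}^{-1}(y))=\partial_{t}G(t,y)-\nabla_{y}G(t,y)\cdot H(y)\dot{z}_{t}^{1},\qquad(\nabla\varphi)(t,\psi_{t}^{-1}(y))=\big(D\psi_{t}(\psi_{t}^{-1}(y))\big)^{t}\nabla_{y}G(t,y).
\]
In the flux term the Jacobian $(D\psi_{t}^{-1})_{|y}$ hidden in $f^{\psi}$ cancels against this transpose Jacobian (since $(D\psi_{t}^{-1})_{|y}=(D\psi_{t}(\psi_{t}^{-1}(y)))^{-1}$), and Proposition~\ref{prop:div_transf}, applied with the constant argument $k$---where it reduces to the classical formula for the divergence of a vector field transported by a $C^{2}$ volume-preserving diffeomorphism---gives $(\div f^{\psi})(t,x,k)=(\div f)(t,\psi_{t}(x),k)$. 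Collecting terms, the transformed $v$-inequality becomes, verbatim, the $u$-inequality tested against $G$; since $\varphi\mapsto G$ is a bijection of $C_{c}^{1}((0,T)\times\R^{d};\R_{+})$, condition~(1) of Definition~\ref{def:entropy_for_full_noise} holds for $v$ iff it holds for $u$. For condition~(2): $v(0,\cdot)=u(0,\psi_{0}(\cdot))=u_{0}$, $v(t,\cdot)=u(t,\psi_{t}(\cdot))$ is defined a.e.\ for $t\notin\mcE$ since $\psi_{t}$ is a diffeomorphism, and from
\[
\int_{B_{r}(0)}|v(t,x)-u_{0}(x)|\,dx\le\int_{\psi_{t}(B_{r}(0))}|u(t,y)-u_{0}(y)|\,dy+\int_{B_{r}(0)}|u_{0}(\psi_{t}(x))-u_{0}(x)|\,dx
\]
the first term tends to $0$ as $t\downarrow0$ by condition~(2) for $u$ (with $\psi_{t}(B_{r}(0))\subseteq B_{r'}(0)$ for small $t$) and the second by $L^{1}_{loc}$-continuity of the diffeomorphism action (approximating $u_{0}\in L^{1}_{loc}$ by continuous functions and using $\psi_{t}\to\mathrm{id}$ locally uniformly). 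The converse implication follows by the same computation with $\psi$ replaced by its inverse flow.

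I expect the main obstacle to be the clean bookkeeping of the change-of-variables step: turning the formal computation preceding the proposition into a rigorous statement about the \emph{integrated} entropy inequality (since $u$ is only $L^{\infty}$, one cannot differentiate pointwise), tracking all Jacobian matrices so that they cancel exactly, and invoking Proposition~\ref{prop:div_transf} only in its constant-argument form. The reduction to $C_{c}^{1}$ test functions and the transfer of the initial trace in condition~(2) are the remaining points needing care, but both are standard.
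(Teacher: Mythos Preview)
Your approach is essentially the same as the paper's: both transport the Kru\v{z}kov entropy inequalities along the flow $\psi$ via the change of variables $y=\psi_t(x)$, using volume preservation and Proposition~\ref{prop:div_transf} to match the flux divergences. The only superficial difference is the direction in which the computation is written---the paper starts from the $u$-inequality and produces the $v$-inequality, while you start from the $v$-inequality and recover the $u$-inequality tested against $G(t,y)=\varphi(t,\psi_t^{-1}(y))$.

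Your version is, if anything, more careful than the paper's in two respects. First, you explicitly address the regularity of test functions: since $H\in C_b^2$ and $z^1\in C^1$, the flow $\psi$ is only $C^1$ in time and $C^2$ in space, so the composed test function lands in $C_c^1$ rather than $C_c^\infty$; you justify the reduction to $C_c^1$ test functions by mollification, whereas the paper passes over this point. Second, you verify condition~(2) of Definition~\ref{def:entropy_for_full_noise} (the initial trace) by splitting $\int_{B_r}|v(t,x)-u_0(x)|\,dx$ and using $\psi_t\to\mathrm{id}$ locally uniformly, which the paper's proof simply omits. Both points are routine but needed for a complete argument, so your proposal in fact tightens the paper's proof while following the same line.
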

\begin{proof}
Assume that $u\in L^{\infty}([0,T]\times\R^{d})$ is a weak entropy solution to \eqref{eq:transport_1} (in the sense of Definition \ref{def:entropy_for_full_noise}). Hence,
\begin{align*}
\int_{0}^{T}\int_{\R^{d}}|u-k|\partial_{t}\vp & +\sgn(u-k)(f(t,x,u)-f(t,x,k))\cdot\nabla\vp-|u-k|H\dot{z}^{1}\cdot\nabla\vp\\
 & +\sgn(u-k)(F(t,x,u)-\div f(t,x,k))\vp dxdt\ge0,
\end{align*}
 for all $k\in\R$, $\vp\in C_{c}^{\infty}([0,T]\times\R^{d})$. %
\begin{comment}
Since \$\textbackslash{}div H = 0\$ the induced flow \$\textbackslash{}Psi\_t\$ is volume preserving, i.e.\textbackslash{} \$\textbackslash{}det D \textbackslash{}Psi\_t \textbackslash{}equiv 1\$.
\end{comment}
Substituting $\psi_{t}^{-1}(x)$ yields
\begin{align*}
\int_{0}^{T}\int_{\R^{d}} & |v-k|(\partial_{t}\vp)(t,\psi_{t}(x))\\
 & +\sgn(v-k)(f(t,\psi_{t}(x),v)-f(t,\psi_{t}(x),k))\cdot(\nabla\vp)(t,\psi_{t}(x))\\
 & -|v-k|H(\psi_{t}(x))\dot{z}^{1}\cdot(\nabla\vp)(t,\psi_{t}(x))\\
 & +\sgn(v-k)(F(t,\psi_{t}(x),v)-(\div f)(t,\psi_{t}(x),k))\vp(t,\psi_{t}(x))dxdt\ge0,
\end{align*}
 where we use that $\psi_{t}$ is volume preserving. We note
\begin{align*}
\partial_{t}[\vp(t,\psi_{t}(x))] & =(\partial_{t}\vp)(t,\psi_{t}(x))+(\nabla\vp)(t,\psi_{t}(x))\cdot\partial_{t}\psi_{t}(x)\\
 & =(\partial_{t}\vp)(t,\psi_{t}(x))-(\nabla\vp)(t,\psi_{t}(x))\cdot H(\psi_{t}(x))\dot{z}_{t}^{1}
\end{align*}
and
\begin{align*}
(\nabla\vp)(t,\psi_{t}(x)) & =\nabla(\vp(t,\psi_{t}(x))\cdot{D\psi_{t}^{-1}}_{|\psi_{t}(x)}\\
 & ={(D\psi_{t}^{-1})^{t}}_{|\psi_{t}(x)}\nabla(\vp(t,\psi_{t}(x)).
\end{align*}
By Proposition \ref{prop:div_transf} we have
\[
(\div f)(t,\psi_{t}(x),k)=\div\left((D\psi^{-1})_{|\psi_{t}(x)}f(t,\psi_{t}(x),k)\right).
\]
Hence,
\begin{align*}
\int_{0}^{T}\int_{\R^{d}} & |v-k|\partial_{t}(\vp(t,\psi_{t}))\\
 & +\sgn(v-k)(f(t,\psi_{t},v)-f(t,\psi_{t},k))\cdot{(D\psi_{t}^{-1})^{t}}_{|\psi_{t}}\nabla(\vp(t,\psi_{t})\\
 & +\sgn(v-k)(F(t,\psi_{t},v)-\div((D\psi^{-1})_{|\psi_{t}}f(t,\psi_{t},k)))\vp(t,\psi_{t})dxdt\ge0,
\end{align*}
for all $k\in\R$, $\vp\in C_{c}^{\infty}([0,T]\times\R^{d})$. This is equivalent to
\begin{align*}
\int_{0}^{T}\int_{\R^{d}} & |v-k|\partial_{t}\vp\\
 & +\sgn(v-k)({(D\psi_{t}^{-1})}_{|\psi_{t}}f(t,\psi_{t},v)-{(D\psi_{t}^{-1})}_{|\psi_{t}}f(t,\psi_{t},k))\nabla\vp\\
 & +\sgn(v-k)(F(t,\psi_{t},v)-\div((D\psi^{-1})_{|\psi_{t}}f(t,\psi_{t},k))\vp dxdt\ge0,
\end{align*}
for all $k\in\R$, $\vp\in C_{c}^{\infty}(\R_{+}\times\R^{d})$. Hence, $v$ is a weak entropy solution to \eqref{eq:transport_transformed}. Following the above calculations in reverse order yields that $u$ is a weak entropy solution if $v$ is.\end{proof}
\begin{rem}
 
\begin{enumerate}
\item Another way to rigorously justify the informal calculations leading to \eqref{eqn:transf} would be to argue via a vanishing viscosity approximation, i.e. first approximate \eqref{eq:linear_mult_smooth} by
\[
\partial_{t}u^{\ve}+\Div f(t,x,u^{\ve})=\ve\D u^{\ve}+F(t,x,u^{\ve})+\nabla u^{\ve}\cdot H(x)\dot{z}_{t}^{1}
\]
then compute the transformed equation by classical calculus and take $\ve\to0$. In order to guarantee that $u^{\ve}$ indeed converges to the (unique) weak entropy solution $u$ more restrictive assumptions on $f,F$ would be necessary. 
\item We emphasize that Proposition \ref{prop:inner_transform} does not yield any claim on the existence and uniqueness of the concerned weak entropy solutions. Again, more restrictive assumptions on $f,F$ would be necessary. 
\end{enumerate}
\end{rem}

\subsection{Affine linear space-time noise}

We consider 
\begin{align}
\partial_{t}u+\Div f(t,x,u) & =F(t,x,u)+u\nu(t,x)\dot{z}_{t}^{2}+g(t,x)\dot{z}_{t}^{3}\label{eq:smooth_affine_linear}\\
u(0,x) & =u_{0}(x)\nonumber 
\end{align}
on $\R^{d}$ with $N_{2},N_{3}\in\N$,
\begin{align*}
(z^{2},z^{3})\in & C^{1}([0,T];\R^{N_{2}+N_{3}}),\\
\nu,g\in & C^{0}([0,T];(C_{b}^{2}\cap\Lip)(\R^{d})),
\end{align*}
 and $f,F$ satisfying $(H1)$, $(H2^{*})$, $(H3)$. It is then easy to see that also $f$ and
\[
\td F(t,x,u):=F(t,x,u)+u\nu(t,x)\dot{z}_{t}^{2}+g(t,x)\dot{z}_{t}^{3}
\]
satisfy $(H1),(H2^{*}),(H3)$ and thus there is a unique weak entropy solution $u$ to \eqref{eq:smooth_affine_linear} by Proposition \ref{prop:general well posedness}.
\begin{rem}
We note that $(H2)$ for $f,F$ does not necessarily imply $(H2)$ for $f,\td F$ as defined in \eqref{eq:f_td} since 
\[
\nabla\td F=\nabla F+u\nabla\nu\dot{z}^{2}+\nabla g\dot{z}^{3}
\]
is not known to be in $L^{1}([0,T]\times\R^{d})$. The localization of $(H2)$ in form of $(H2^{*})$ thus becomes crucial at this point.
\end{rem}
Let $\phi$ be the flow of $C^{2}$-diffeomorphisms corresponding to 
\begin{align*}
\dot{\phi}(t,x) & =\phi(t,x)\nu(t,x)\dot{z}_{t}^{2}\\
\phi(0,x) & =Id_{\R},
\end{align*}
i.e. $\phi(t,x)r=re^{\int_{0}^{t}\nu(\tau,x)\dot{z}_{\tau}^{2}d\tau}$ . For notational convenience we set 
\[
\mu(t,x):=-\int_{0}^{t}\nu(r,x)\dot{z}_{r}^{2}dr.
\]
Moreover, let $\vr$ be the flow of $C^{2}$-diffeomorphisms to
\[
\dot{\vr}(t,x)=e^{\mu(t,x)}g(t,x)\dot{z}_{t}^{3},
\]
i.e. $\vr(t,x)=\int_{0}^{t}e^{\mu(r,x)}g(r,x)\dot{z}_{r}^{3}dr$. 
\begin{prop}
\label{prop:affine_linear_transf}Let $u_{0}\in(L^{\infty}\cap L^{1}\cap BV)(\R^{d})$. A function $u$ is the unique weak entropy solution to 
\begin{align}
\partial_{t}u+\Div f(t,x,u) & =F(t,x,u)+u\nu(t,x)\dot{z}_{t}^{2}+g(t,x)\dot{z}_{t}^{3}\label{eq:linear_noise-2}\\
u(0) & =u_{0}\nonumber 
\end{align}
 iff $v(t,x)=e^{\mu(t,x)}u(t,x)-\vr(t,x)$ is the unique weak entropy solution%
\footnote{We note that $^{\phi}f,{}^{\phi}F$ do not necessarily satisfy $(H2)$ nor $(H3)$ anymore. Existence and uniqueness of a weak entropy solution to \eqref{eq:linear noise transformed-1} is part of the proof. %
} to
\begin{align}
\partial_{t}v & +\Div_{\vr}^{\phi}f(t,x,v)={}_{\vr}^{\phi}F(t,x,v),\label{eq:linear noise transformed-1}\\
v(0) & =u_{0}\nonumber 
\end{align}
where
\begin{align*}
_{\vr}^{\phi}f(t,x,v):= & e^{\mu(t,x)}f(t,x,e^{-\mu(t,x)}(v+\vr(t,x)))\\
_{\vr}^{\phi}F(t,x,v):= & e^{\mu(t,x)}F(t,x,e^{-\mu(t,x)}(v+\vr(t,x)))\\
 & +f(t,x,e^{-\mu(t,x)}(v+\vr(t,x)))\nabla e^{\mu(t,x)}.
\end{align*}
\end{prop}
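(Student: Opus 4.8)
The plan is to mimic the proof of Proposition~\ref{prop:inner_transform}: take a function $u$ that is a weak entropy solution to \eqref{eq:linear_noise-2}, write out the Kru\v{z}kov inequality against a test function $\vp \in C_c^\infty((0,T)\times\R^d;\R_+)$, and perform the substitution of the dependent variable $v = e^{\mu(t,x)}u - \vr(t,x)$, i.e.\ $u = e^{-\mu(t,x)}(v+\vr(t,x))$. The crucial algebraic point is that the entropy pair transforms correctly: since $e^{\mu(t,x)} > 0$, for any $k \in \R$ we have $\sgn(u-k) = \sgn(e^{\mu}u - e^{\mu}k)$, and setting $\ell := e^{\mu(t,x)}k - \vr(t,x)$ we get $|u-k| = e^{-\mu}|v-\ell|$ and $\sgn(u-k) = \sgn(v-\ell)$. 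The subtlety compared to the transport case is that $\ell$ depends on $(t,x)$, so the family of constants $k$ does not map to a family of constants; I would handle this exactly as in the standard treatment of scalar conservation laws under such substitutions (e.g.\ \cite{LM11}), namely by first establishing the inequality for the doubling-of-variables / Kru\v{z}kov entropy with $k$ ranging over $\R$ and then localizing: fix $\ell_0 \in \R$ and test near a point $(t_0,x_0)$ with $k = k(t_0,x_0)$ chosen so that $e^{\mu(t_0,x_0)}k - \vr(t_0,x_0) = \ell_0$, absorbing the resulting $(t,x)$-dependent error terms into the source term of the transformed equation. This is what produces the extra flux-gradient term $f(t,x,e^{-\mu}(v+\vr))\nabla e^{\mu}$ appearing in ${}_{\vr}^\phi F$.

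Concretely, I would: (i) write $\partial_t[\vp]$ and $\nabla[\vp]$ in the new coordinates (here, unlike the transport case, the \emph{spatial} variable is unchanged, so no flow $\psi_t$ and no volume factor enters — only the dependent variable is rescaled), (ii) substitute $u = e^{-\mu}(v+\vr)$ into the flux term $f(t,x,u)-f(t,x,k)$, multiply and divide by $e^{\mu}$ to recognize ${}_{\vr}^\phi f(t,x,v) - {}_{\vr}^\phi f(t,x,\ell)$ up to the correction $(v+\vr)\,\partial_t(\cdot)$-type and $\nabla e^{\mu}$-type terms coming from the $(t,x)$-dependence of the rescaling, (iii) collect the terms involving $\partial_t e^{\mu}$ and $\partial_t \vr$ and verify — using the defining ODEs $\dot\phi = \phi\nu\dot z^2$ (equivalently $\partial_t e^{\mu} = -\nu\dot z^2 e^{\mu}$, since $\mu = -\int_0^t\nu\dot z^2$) and $\dot\vr = e^{\mu}g\dot z^3$ — that the noise terms $u\nu\dot z^2 + g\dot z^3$ in \eqref{eq:linear_noise-2} are exactly cancelled/absorbed. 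This is the routine-but-bookkeeping-heavy computation that justifies the informal chain rule $\partial_t v = e^{\mu}\partial_t u + u\,\partial_t e^{\mu} - \partial_t\vr = e^{\mu}(-\Div f + F + u\nu\dot z^2 + g\dot z^3) - u\nu\dot z^2 e^{\mu} - e^{\mu}g\dot z^3 = e^{\mu}(-\Div f + F)$, and then $e^{\mu}\Div f(t,x,u) = \Div(e^{\mu}f(t,x,u)) - f(t,x,u)\cdot\nabla e^{\mu} = \Div\,{}_{\vr}^\phi f(t,x,v) - f(t,x,u)\cdot\nabla e^{\mu}$. The reverse implication follows by running the substitution backwards, and uniqueness on both sides is handled by the footnote's remark: one checks that $u_0 \in (L^\infty\cap L^1\cap BV)(\R^d)$ is preserved (as $e^{\mu(0,\cdot)} = 1$, $\vr(0,\cdot) = 0$), that the transformed coefficients still satisfy $(H1)$ and a localized $(H2^*)$, $(H3)$ so that Proposition~\ref{prop:general well posedness}(iii) applies, and hence the weak entropy solutions on both sides are unique and correspond bijectively.

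The main obstacle I anticipate is not any single hard estimate but rather the careful treatment of the $(t,x)$-dependence of the change of unknown $u \mapsto e^{\mu(t,x)}u - \vr(t,x)$: unlike a pure time-rescaling, here differentiating the entropy inequality in the weak formulation genuinely produces the extra term $f(t,x,e^{-\mu}(v+\vr))\nabla e^{\mu}$, and one must check it lands in the source term with the correct sign and that it does not spoil the entropy structure (it multiplies $\vp$, not $\nabla\vp$, so it is legitimate as a zeroth-order source). A secondary technical point is verifying that the new coefficients ${}_{\vr}^\phi f$, ${}_{\vr}^\phi F$ satisfy the hypotheses needed for Proposition~\ref{prop:general well posedness}(iii) — in particular the localized integrability of $\nabla({}_{\vr}^\phi F - \div\,{}_{\vr}^\phi f)$ — which is where the regularity $H\in C_b^3$, $\nu,g\in C^0([0,T];C_b^2\cap\Lip)$ and the smoothness of $\mu,\vr$ in $x$ are used; these are finite-dimensional, deterministic facts about $z\in C^1$ and cause no difficulty but must be recorded.
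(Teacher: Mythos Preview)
Your approach is genuinely different from the paper's, and it runs into two real problems.

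\textbf{The paper's route.} The authors explicitly remark that for this ``outer'' transformation it is \emph{more convenient to argue via vanishing viscosity than to work with the entropy formulation directly}. Their proof proceeds in four approximation layers: (1) smooth compactly supported data, where the change of unknown is justified by classical calculus on the viscous approximants $u^\ve$ and then $\ve\to0$; (2) removal of smoothness via the stability Theorem~\ref{thm:loc_entropy_stability}; (3) removal of compact support by multiplying $f,F,\nu,g$ with a spatial cutoff $\eta^m$; (4) uniqueness for the transformed equation via the tailor-made Corollary~\ref{cor:uniqueness_by_approx}.

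\textbf{First gap: the entropy substitution.} Your plan to mimic Proposition~\ref{prop:inner_transform} is harder than you suggest. In the transport case the change of variables acts on the \emph{independent} variable $x$, so a constant $k$ stays a constant and the Kru\v{z}kov inequality transforms line by line. Here the change acts on the \emph{dependent} variable and is $(t,x)$-dependent: a constant $k$ is sent to $\ell(t,x)=e^{\mu(t,x)}k-\vr(t,x)$, which is not constant. Testing the $u$-inequality with $e^{\mu}\vp$ therefore yields an inequality for $|v-\ell(t,x)|$, not for $|v-\ell_0|$. Turning this into the Kru\v{z}kov inequality for $v$ with arbitrary constant $\ell_0$ requires a genuine Kato/doubling-of-variables argument (compare an entropy solution against the \emph{smooth} function $w(t,x)=e^{-\mu}(\ell_0+\vr)$), not merely ``localizing and absorbing errors''. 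This can be made to work, but it is a real additional step that your proposal only gestures at --- and it is precisely the reason the authors switched to the viscosity route.

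\textbf{Second gap: uniqueness.} Your claim that ${}_{\vr}^{\phi}f,\,{}_{\vr}^{\phi}F$ satisfy $(H3)$, so that Proposition~\ref{prop:general well posedness}(iii) gives uniqueness for the transformed equation, is incorrect --- the footnote to the statement explicitly warns that $(H2)$ and $(H3)$ may fail after transformation. Concretely, $\div\,{}_{\vr}^{\phi}f-{}_{\vr}^{\phi}F$ contains the term $(\partial_u f)(t,x,e^{-\mu}(v+\vr))\cdot(-\nabla\mu\,(v+\vr)+\nabla\vr)$, whose $v$-derivative involves $\partial_u f$ and $\partial_u^2 f$ evaluated at unbounded arguments and multiplied by $(v+\vr)$; neither is controlled globally in $v$ under $(H1)$. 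This is why the paper does \emph{not} invoke Proposition~\ref{prop:general well posedness}(iii) for the transformed problem but instead proves uniqueness by approximation (Step~4, Corollary~\ref{cor:uniqueness_by_approx}), using that the cutoff problems coincide with the limiting one on large balls. Your proposal has no substitute for this argument.
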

\begin{proof}
For this so-called ``outer transformation'' (cf. \cite{FO13}) it seems more convenient to argue via a vanishing viscosity approximation than to work with the entropy formulation directly as it was done in Proposition \ref{prop:inner_transform}. In order to obtain the existence and uniqueness of a weak entropy solution to \eqref{eq:linear noise transformed-1} we shall first consider an approximation via localization of $ $$f,F,\nu,g$. As a second step we consider smooth approximations of these localizations. We then consider vanishing viscosity approximations which allow to calculate the transformation explicitly. We may then recover the general cases by stability of solutions to scalar conservation laws.

\textbf{Step 1:} Smooth, compactly supported data

We start with the case of smooth, compactly supported data, i.e. assume in addition $f,F,\nu,g,z,u_{0}$ to be smooth with 
\begin{equation}
f(t,x,u)=F(t,x,u)=\nu(t,x)=g(t,x)=0,\quad\forall|x|\ge m,(t,u)\in[0,T]\times\R,\label{eq:compact_supp-1}
\end{equation}
for some $m>0$. In particular, $f,F$ satisfy $(H1),(H2),(H3)$. We then consider a vanishing viscosity approximation, i.e. 
\begin{align}
\partial_{t}u^{\ve}+\Div f(t,x,u^{\ve}) & =\ve\D u^{\ve}+F(t,x,u^{\ve})+u^{\ve}\nu(t,x)\dot{z}_{t}^{2}+g(t,x)\dot{z}_{t}^{3}\label{eq:linear_noise-1-1}\\
u^{\ve}(0) & =u_{0}.\nonumber 
\end{align}
The existence of a unique classical solution to \eqref{eq:linear_noise-1-1} follows from standard theory (cf. e.g. \cite{LSU67}) and from \cite[Theorem 4]{K70} we know that 
\begin{equation}
u^{\ve}\to u\quad\text{in }L^{1}([0,T];L_{loc}^{1}(\R^{d}))\label{eq:u_conv-1}
\end{equation}
 and $dt\otimes d\xi$ almost everywhere (selecting subsequences if necessary). Due to $(H3)$ and the maximum principle (cf. also Lemma \ref{lem:linfty_bound} below) $u^{\ve}$ is uniformly bounded in $L^{\infty}([0,T]\times\R^{d})$. Setting
\[
v^{\ve,1}(t,x)=e^{\mu(t,x)}u^{\ve}(t,x)
\]
we obtain
\begin{align*}
\partial_{t}v^{\ve,1}= & e^{\mu(t,x)}\partial_{t}u^{\ve}(t,x)-\nu(t,x)\dot{z}_{t}^{2}e^{\mu(t,x)}u^{\ve}(t,x)\\
= & e^{\mu(t,x)}(\ve\D u^{\ve}-\Div f(t,x,u^{\ve})+F(t,x,u^{\ve})+u^{\ve}(t,x)\nu(t,x)\dot{z}_{t}^{2}+g(t,x)\dot{z}_{t}^{3})\\
 & -\nu(t,x)\dot{z}_{t}v(t,x)\\
= & e^{\mu(t,x)}\ve\D e^{-\mu(t,x)}v^{\ve,1}-e^{\mu(t,x)}\Div f(t,x,e^{-\mu(t,x)}v^{\ve,1})\\
 & +e^{\mu(t,x)}F(t,x,e^{-\mu(t,x)}v^{\ve,1})+e^{\mu(t,x)}g(t,x)\dot{z}_{t}^{2}.
\end{align*}
We now set $\vr(t,x)=\int_{0}^{t}e^{\mu(r,x)}g(r,x)\dot{z}_{r}^{3}dr$ and 
\[
v^{\ve}(t,x)=v^{\ve,1}(t,x)-\vr(t,x).
\]
Then
\begin{align*}
\partial_{t}v^{\ve}= & \partial_{t}v^{\ve,1}-e^{\mu(t,x)}g(t,x)\dot{z}_{t}^{3}\\
= & e^{\mu(t,x)}\ve\D e^{-\mu(t,x)}(v^{\ve}+\vr)-e^{\mu(t,x)}\Div f(t,x,e^{-\mu(t,x)}(v^{\ve}+\vr))\\
 & +e^{\mu(t,x)}F(t,x,e^{-\mu(t,x)}(v^{\ve}+\vr)).
\end{align*}
Since
\[
e^{\mu(t,x)}\Div f(t,x,u)=\Div(e^{\mu(t,x)}f(t,x,u))-f(t,x,u)\nabla e^{\mu(t,x)}
\]
we have
\begin{align*}
\partial_{t}v^{\ve} & +\Div_{\vr}^{\phi}f(t,x,v^{\ve})=\ve_{\vr}^{\phi}Lv^{\ve}+{}_{\vr}^{\phi}F(t,x,v^{\ve}),
\end{align*}
where the linear, strongly elliptic operator $_{\vr}^{\phi}L:H^{2}(\mcO)\cap H_{0}^{1}(\mcO)\to L^{2}(\mcO)$ is defined by
\[
_{\vr}^{\phi}Lv:=e^{\mu}\D e^{-\mu}(v+\vr)=\D v-2\nabla\mu\cdot\nabla v+v(|\nabla\mu|^{2}-\D\mu)+e^{\mu}\D(e^{-\mu}\vr).
\]
Due to \eqref{eq:u_conv-1} we have
\[
v^{\ve}\to v:=e^{\mu}u\quad\text{in }L^{1}([0,T];L_{loc}^{1}(\R^{d}))
\]
which is easily seen to imply that $v$ is a weak entropy solution to \eqref{eq:linear noise transformed-1}. 

\textbf{Step 2:} $u_{0}\in(L^{\infty}\cap L^{1}\cap BV)(\R^{d})$ and $f,F,\nu,g$ having compact support in $x$, i.e. satisfy \eqref{eq:compact_supp-1}.

Let $u$ be the unique weak entropy solution to \eqref{eq:linear_noise-2}. We aim to remove the additional smoothness assumptions on the data required in step one. Let $f^{\d},F^{\d},\nu^{\d},g^{\d},z^{\d},u_{0}^{\d}$ be smooth approximations of $f,F,\nu,g,z,u_{0}$ respectively, obtained by mollification. Since $f,F$ satisfy $(H1),(H2),(H3)$ so do $f^{\d}$, $F^{\d}$. We have
\[
\left.\begin{gathered}\|u_{0}^{\d}-u_{0}\|_{L^{1}(\R^{d})}\\
\|\partial_{u}f^{\d}-\partial_{u}f\|_{L^{\infty}([0,T]\times\R^{d}\times[-U,U])}\\
\|F^{\d}-\div f^{\d}-(F-\div f)\|_{L^{\infty}([0,T]\times\R^{d}\times[-U,U])}\\
\|\nu^{\d}-\nu\|_{C^{0}([0,T]\times\R^{d})}\\
\|g^{\d}-g\|_{C^{0}([0,T]\times\R^{d})}\\
\|z^{\d}-z\|_{C^{1}([0,T])}
\end{gathered}
\right\} \to0,\quad\text{for }\d\to0
\]
for all $U,T>0$ and consider the sequence of unique weak entropy solutions $u^{\d}$ corresponding to 
\begin{align*}
\partial_{t}u^{\d}+\Div f^{\d}(t,x,u^{\d}) & =F^{\d}(t,x,u^{\d})+u^{\d}\nu^{\d}(t,x)\dot{z}_{t}^{\d,2}+g^{\d}(t,x)\dot{z}_{t}^{\d,3}\\
u^{\d}(0) & =u_{0}^{\d}.
\end{align*}
We note 
\begin{align*}
\td F(t,x,u)-\td F^{\d}(t,x,u)= & F(t,x,u)-F^{\d}(t,x,u)+u\nu\dot{z}^{2}-u\nu^{\d}\dot{z}_{t}^{\d,2}\\
 & +g(t,x)\dot{z}_{t}^{3}-g^{\d}(t,x)\dot{z}_{t}^{\d,3}.
\end{align*}
By step one we have that 
\[
v^{\d}(t,x):=e^{\mu^{\d}(t,x)}u^{\d}(t,x)-\vr^{\d}(t,x)
\]
is a weak entropy solution to 
\begin{align*}
\partial_{t}v^{\d} & +\Div_{\vr^{\d}}^{\phi^{\d}}f^{\d}(t,x,v^{\d})={}_{\vr^{\d}}^{\phi^{\d}}F^{\d}(t,x,v^{\d}).
\end{align*}
We note that $f^{\d}$, $F^{\d}$ satisfy $(H3)$ with uniform bounds. By Lemma \ref{lem:linfty_bound} this implies
\[
\mcV:=\|u\|_{L^{\infty}([0,T]\times\R^{d})}\vee\|u^{\d}\|_{L^{\infty}([0,T]\times\R^{d})}\le C<\infty.
\]
Due to Theorem \ref{thm:loc_entropy_stability} we have (with $M,\k_{0}^{*},\k^{*}$ defined as in Appendix \ref{sec:app_det_SCL}):
\[
\begin{split} & \sup_{t\in[0,T]}\int_{B_{R}(x_{0})}|u(t,x)-u^{\d}(t,x)|dx\\
 & \le e^{\kappa^{*}T}\int_{B_{R+MT}(x_{0})}|u_{0}(x)-u_{0}^{\d}(x)|dx\\
 & +Te^{(\kappa_{0}^{*}+\kappa^{*})T}\|\partial_{u}(f-f^{\d})\|_{L^{\infty}(K_{R,M}(T,x_{0})\times[-\mcV,\mcV])}\\
 & \times\Big(\TV(u_{0})+C\int_{0}^{T}\int_{\R^{d}}\|\nabla(\td F-\div f)(r,x,\cdot)\|_{L^{\infty}([-\mcV,\mcV])}dxdr\Big)\\
 & +e^{\kappa^{*}T}\int_{0}^{T}\int_{B_{R+MT}(x_{0})}\|((\td F-\td F^{\d})-\div(f-f^{\d}))(r,x\cdot)\|_{L^{\infty}([-\mcV,\mcV])}dxdr
\end{split}
\]
and thus
\[
\sup_{t\in[0,T]}\|u^{\d}(t)-u(t)\|_{L^{1}(K)}\to0
\]
for all compact sets $K\subseteq\R^{d}$. With $v:=e^{\mu}u-\vr$ we thus obtain
\[
\sup_{t\in[0,T]}\|v^{\d}(t)-v(t)\|_{L^{1}(K)}=\sup_{t\in[0,T]}\|e^{\mu^{\d}(t)}u^{\d}(t)-e^{\mu(t)}u(t)+\vr^{\d}(t)-\vr(t)\|_{L^{1}(K)}\to0,
\]
for all compact sets $K\subseteq\R^{d}$. It easily follows that $v$ is a weak entropy solution to \eqref{eq:linear noise transformed-1}. 

\textbf{Step 3:} $u_{0}\in(L^{\infty}\cap L^{1}\cap BV)(\R^{d})$

We argue as in the last step, approximating $f,F,\nu,g$ by localized approximations obtained by multiplication with a smooth cut-off function in the $x$-variable, i.e. set
\begin{align*}
f^{m}(t,x,u):= & \eta^{m}(x)f(t,x,u)\\
F^{m}(t,x,u):= & \eta^{m}(x)F(t,x,u)+\nabla\eta^{m}(x)\cdot f(t,x,u)\\
\nu^{m}(t,x):= & \eta^{m}(x)\nu(t,x)\\
g^{m}(t,x):= & \eta^{m}(x)g(t,x)
\end{align*}
where $\eta^{m}$ is a smooth function satisfying
\[
1_{B_{m}(0)}\le\eta^{m}\le1_{B_{m+1}(0)}.
\]
We note
\[
\td F^{m}-\div f^{m}=\eta^{m}(\td F-\div f)
\]
and thus $f^{m},\td F^{m}$ satisfy $(H1)$, $(H2^{*})$, $(H3)$. Let $u^{m}$ be the corresponding weak entropy solution. Since $f^{m},F^{m}$ satisfy $(H3)$ with uniform bounds we have 
\[
\mcV:=\|u^{m}\|_{L^{\infty}([0,T]\times\R^{d})}\le C<\infty,
\]
by Lemma \ref{lem:linfty_bound}. By Theorem \ref{thm:loc_entropy_stability} we obtain:
\[
\begin{split} & \sup_{t\in[0,T]}\int_{B_{R}(x_{0})}|u(t,x)-u^{m}(t,x)|dx\\
 & \le Te^{(\kappa_{0}^{*}+\kappa^{*})T}\|\partial_{u}(f-f^{m})\|_{L^{\infty}(K_{R,M}(T,x_{0})\times\R)}\\
 & \times\Big(\TV(u_{0})+C\int_{0}^{T}\int_{B_{R+MT}(x_{0})}\|\nabla(\td F^{m}-\div f^{m})(r,x,\cdot)\|_{L^{\infty}([-\mcV,\mcV])}dxdr\Big)\\
 & +e^{\kappa^{*}T}\int_{0}^{T}\int_{B_{R+MT}(x_{0})}\|((\td F-\td F^{m})-\div(f-f^{m}))(r,x\cdot)\|_{L^{\infty}([-\mcV,\mcV])}dxdr,
\end{split}
\]
 for all $R>0,x_{0}\in\R^{d}$. We observe
\[
\td F-\td F^{m}=(1-\eta^{m})(F+u\nu\dot{z}^{2}+g\dot{z}^{3}).
\]
Hence, for all $R>0,x_{0}\in\R^{d}$ and $m$ large enough we obtain
\begin{equation}
\begin{split}u^{m}\equiv & u,\quad\text{on }[0,T]\times B_{R}(x_{0}).\end{split}
\label{eq:convergence_in_m-1}
\end{equation}
Moreover, obviously
\begin{align*}
\mu^{m} & \equiv\mu\\
\vr^{m} & \equiv\vr,\quad\text{on }[0,T]\times B_{R}(x_{0}),
\end{align*}
for $m$ large enough. By step two,
\[
v^{m}=e^{\mu^{m}}u^{m}-\vr^{m}
\]
 are weak entropy solutions to \eqref{eq:linear noise transformed-1} with $_{\vr}^{\phi}f,{}_{\vr}^{\phi}F$ replaced by 
\begin{align}
_{\vr^{m}}^{\phi^{m}}f^{m}(t,x,v):= & e^{\mu^{m}(t,x)}f^{m}(t,x,e^{-\mu^{m}(t,x)}(v+\vr^{m}(t,x)))\nonumber \\
_{\vr^{m}}^{\phi^{m}}F^{m}(t,x,v):= & e^{\mu^{m}(t,x)}F^{m}(t,x,e^{-\mu^{m}(t,x)}(v+\vr^{m}(t,x)))\label{eq:fm}\\
 & +f^{m}(t,x,e^{-\mu^{m}(t,x)}(v+\vr^{m}(t,x)))\nabla e^{\mu^{m}(t,x)}.\nonumber 
\end{align}
Equation \eqref{eq:convergence_in_m-1} then implies that $v:=e^{\mu}u-\vr$ is a weak entropy solution to \eqref{eq:linear noise transformed-1}. 

\textbf{Step 4:} Uniqueness for \eqref{eq:linear noise transformed-1}

In step three we have obtained the existence of a weak entropy solution $v$ to \eqref{eq:linear noise transformed-1} as an $L^{1}([0,T];L_{loc}^{1}(\R^{d}))$ limit of weak entropy solutions $v^{m}$ corresponding to 
\begin{align*}
\partial_{t}v^{m} & +\Div{}_{\vr^{m}}^{\phi^{m}}f^{m}(t,x,v)={}_{\vr^{m}}^{\phi^{m}}F^{m}(t,x,v),\\
v(0) & =u_{0}
\end{align*}
where $_{\vr^{m}}^{\phi^{m}}f^{m},{}_{\vr^{m}}^{\phi^{m}}F^{m}$ are as in \eqref{eq:fm}. Note that since $u^{m}$ is uniformly bounded in $L^{\infty}([0,T]\times\R^{d})$ so is $v^{m}$. We observe that $_{\vr^{m}}^{\phi^{m}}f^{m},{}_{\vr^{m}}^{\phi^{m}}F^{m}$ have compact support in $x$ and 
\[
_{\vr^{m}}^{\phi^{m}}f^{m}(t,x,v)={}_{\vr}^{\phi}f(t,x,v)\quad\text{on }[0,T]\times B_{R}(0)\times\R
\]
for all $m>0$ large enough. Hence, uniqueness of weak entropy solutions to \eqref{eq:linear noise transformed-1} follows from Corollary \ref{cor:uniqueness_by_approx}. 
\end{proof}

\subsection{Full transformation\label{sub:full_transf}}

We now subsequently apply both of the transformations considered above. As before, let $d,N_{1},N_{2},N_{3}\in\N$, $f,F$ satisfying $(H1)$, $(H2^{*})$, $(H3)$,
\begin{align*}
z=(z^{1},z^{2},z^{3})\in & C^{1}([0,T];\R^{N_{1}+N_{2}+N_{3}}),\\
H\in & (C_{b}^{3}\cap\Lip)(\R^{d};\R^{d\times N_{1}}),\\
\nu,g\in & (C_{b}^{2}\cap\Lip)(\R^{d}),
\end{align*}
and assume $\div(H)=0$.

We define $\psi$ to be the flow of $C^{3}$-diffeomorphisms induced by
\begin{align*}
\dot{\psi}_{t} & =-H(\psi_{t})\dot{z}_{t}^{1}\\
\psi_{0} & =Id_{\R^{d}},
\end{align*}
and $\phi$ the one for 
\begin{align*}
\dot{\phi}_{t} & =\phi_{t}\nu(\psi_{t}(x))\dot{z}_{t}^{2}\\
\phi_{0}(x) & =Id_{\R}.
\end{align*}
Furthermore, we set $\vr(t,x):=\int_{0}^{t}{}^{\phi}g^{\psi}(r,x)\dot{z}_{r}^{3}dr,$ where 
\[
^{\phi}g^{\psi}(t,x):=\phi_{t}^{-1}(x)g(\psi_{t}(x))=e^{\mu(t,x)}g(\psi_{t}(x)),
\]
with $\mu(t,x):=-\int_{0}^{t}\nu(\psi_{r}(x))\dot{z}_{r}^{2}dr$. We obtain
\begin{prop}
\label{prop:full_transformation}Let $u_{0}\in(L^{\infty}\cap L^{1}\cap BV)(\R^{d})$. A function $u$ is the unique weak entropy solution to 
\begin{align}
\partial_{t}u+\Div f(t,x,u) & =F(t,x,u)+\nabla u\cdot H(x)\dot{z}_{t}^{1}+u\nu(x)\dot{z}_{t}^{2}+g(x)\dot{z}_{t}^{3},\label{eq:full_noise}\\
u(0) & =u_{0},\nonumber 
\end{align}
iff $v(t,x):=e^{\mu(t,x)}u(t,\psi_{t}(x))-\vr(t,x)$ is the unique weak entropy solution to 
\begin{align*}
\partial_{t}v+\Div{}_{\vr}^{\phi}f^{\psi}(t,x,v) & ={}_{\vr}^{\phi}F^{\psi}(t,x,v)\\
v(0) & =u_{0}
\end{align*}
with 
\begin{align}
_{\vr}^{\phi}f^{\psi}(t,x,v):= & e^{\mu(t,x)}{D\psi_{t}^{-1}}_{|\psi_{t}(x)}f(t,\psi_{t}(x),e^{-\mu(t,x)}(v+\vr(t,x)))\nonumber \\
_{\vr}^{\phi}F^{\psi}(t,x,v):= & e^{\mu(t,x)}F(t,\psi_{t}(x),e^{-\mu(t,x)}(v+\vr(t,x)))\nonumber \\
 & +{D\psi_{t}^{-1}}_{|\psi_{t}(x)}f(t,\psi_{t}(x),e^{-\mu(t,x)}(v+\vr(t,x)))\nabla e^{\mu(t,x)}\label{eq:full_transf}\\
= & e^{\mu(t,x)}F(t,\psi_{t}(x),e^{-\mu(t,x)}(v+\vr(t,x)))\nonumber \\
 & +{}_{\vr}^{\phi}f^{\psi}(t,x,v)\cdot\nabla\mu(t,x).\nonumber 
\end{align}
\end{prop}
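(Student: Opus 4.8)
The plan is to obtain Proposition \ref{prop:full_transformation} by composing the two transformations already established in Propositions \ref{prop:inner_transform} and \ref{prop:affine_linear_transf}. First I would apply the inner (transport) transformation: by Proposition \ref{prop:inner_transform}, $u$ is a weak entropy solution to \eqref{eq:full_noise} if and only if $w(t,x) := u(t,\psi_t(x))$ is a weak entropy solution to
\begin{align*}
\partial_{t}w+\Div f^{\psi}(t,x,w)={}F^{\psi}(t,x,w)+w\,\nu(\psi_{t}(x))\dot{z}_{t}^{2}+g(\psi_{t}(x))\dot{z}_{t}^{3},
\end{align*}
where $f^{\psi}(t,x,w)={D\psi_{t}^{-1}}_{|\psi_{t}(x)}f(t,\psi_{t}(x),w)$ and $F^{\psi}(t,x,w)=F(t,\psi_{t}(x),w)$. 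Here one must track the affine-linear terms through the change of variables $x \mapsto \psi_t(x)$: since $\psi_t$ is volume preserving and acts only in space, the zeroth- and first-order (in $u$) noise terms $u\nu \dot z^2 + g\dot z^3$ simply get their spatial arguments composed with $\psi_t$, producing $w\,\nu(\psi_t(x))\dot z^2_t + g(\psi_t(x))\dot z^3_t$, exactly the structure treated in the affine-linear section. One needs to check that $f^{\psi}, F^{\psi}$ still satisfy $(H1),(H2^{*}),(H3)$ — this uses $H \in C_b^3$ (so $D\psi_t^{-1}$ is $C^2$ in $x$ with bounds locally uniform in $t$) together with $\div(H)=0$, and is essentially the content of the appendix computations referenced via Proposition \ref{prop:div_transf}.

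Next I would apply the outer (affine-linear) transformation of Proposition \ref{prop:affine_linear_transf} to the equation for $w$, with the roles of $\nu, g$ played by $\nu(\psi_t(\cdot)), g(\psi_t(\cdot))$ and of $f, F$ by $f^{\psi}, F^{\psi}$. The associated flow $\phi$ is exactly the one defined before the proposition, $\dot\phi_t = \phi_t\,\nu(\psi_t(x))\dot z^2_t$, so that $\mu(t,x) = -\int_0^t \nu(\psi_r(x))\dot z^2_r\,dr$ matches the definition in the statement, and the function $\vr$ defined in Proposition \ref{prop:affine_linear_transf} with data $e^{\mu}g(\psi_t(\cdot))$ coincides with $\vr(t,x) = \int_0^t {}^{\phi}g^{\psi}(r,x)\dot z^3_r\,dr$ as defined before the proposition. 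Proposition \ref{prop:affine_linear_transf} then yields that $w$ is the unique weak entropy solution of the intermediate equation if and only if $v(t,x) := e^{\mu(t,x)}w(t,x) - \vr(t,x) = e^{\mu(t,x)}u(t,\psi_t(x)) - \vr(t,x)$ is the unique weak entropy solution of
\begin{align*}
\partial_{t}v+\Div {}_{\vr}^{\phi}(f^{\psi})(t,x,v)={}_{\vr}^{\phi}(F^{\psi})(t,x,v),
\end{align*}
where ${}_{\vr}^{\phi}(f^{\psi})$ and ${}_{\vr}^{\phi}(F^{\psi})$ are obtained by substituting $f^{\psi}, F^{\psi}$ into the formulas of Proposition \ref{prop:affine_linear_transf}. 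It then remains to simplify: substituting $f^{\psi}(t,x,r) = {D\psi_t^{-1}}_{|\psi_t(x)} f(t,\psi_t(x),r)$ into ${}_{\vr}^{\phi}(f^{\psi})$ gives precisely $e^{\mu}{D\psi_t^{-1}}_{|\psi_t(x)} f(t,\psi_t(x), e^{-\mu}(v+\vr))$, matching \eqref{eq:full_transf}, and similarly $F^{\psi}$ yields the stated ${}_{\vr}^{\phi}F^{\psi}$, with the last equality in \eqref{eq:full_transf} following from the identity $ {}_{\vr}^{\phi}f^{\psi}\cdot\nabla\mu = {D\psi_t^{-1}}_{|\psi_t(x)}f(t,\psi_t(x),e^{-\mu}(v+\vr))\,e^{\mu}\nabla\mu$ and $\nabla e^{\mu} = e^{\mu}\nabla\mu$.

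The main obstacle I anticipate is not any single estimate but the bookkeeping needed to verify that the hypotheses $(H1),(H2^{*}),(H3)$ are propagated through the composition — in particular, that after the inner transformation the intermediate data $f^{\psi}, F^{\psi}$ together with the transported coefficients $\nu\circ\psi, g\circ\psi$ still fall within the scope of Proposition \ref{prop:affine_linear_transf}, and that the flows and auxiliary functions $\phi, \mu, \vr$ built from $\nu\circ\psi, g\circ\psi$ coincide with those defined in the statement. The regularity budget is tight: $H \in C_b^3$ is used precisely so that $D\psi_t^{-1}$ is $C^2$ in space (needed for the $\nabla^2 f$-type conditions in $(H1),(H2^{*})$ to survive), and $\nu, g \in C_b^2$ ensures $\mu, \vr$ are $C^2$ in $x$. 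Once these verifications are in place the proof is a one-line composition; no new analytic input beyond Propositions \ref{prop:inner_transform} and \ref{prop:affine_linear_transf} (and the uniqueness statements therein, which chain because each transformation is a bijection on weak entropy solutions) is required. I would also remark, as in the footnote to Proposition \ref{prop:affine_linear_transf}, that the final data ${}_{\vr}^{\phi}f^{\psi}, {}_{\vr}^{\phi}F^{\psi}$ need not satisfy $(H2)$ or $(H3)$, so existence and uniqueness for the transformed equation are part of the conclusion, inherited from Proposition \ref{prop:affine_linear_transf} via Corollary \ref{cor:uniqueness_by_approx}.
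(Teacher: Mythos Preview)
Your proposal is correct and follows essentially the same route as the paper's own proof: first apply the inner (transport) transformation of Proposition \ref{prop:inner_transform} to pass from $u$ to $w=u(\cdot,\psi_\cdot)$, verify via Proposition \ref{prop:div_transf} and $\div H=0$ that the intermediate data $f^\psi,\td F^1$ satisfy $(H1),(H2^*),(H3)$, and then apply the outer (affine-linear) transformation of Proposition \ref{prop:affine_linear_transf} with coefficients $\nu\circ\psi,\ g\circ\psi$ to obtain $v$. The paper's argument is slightly terser on the hypothesis-propagation and uniqueness-chaining points you flag, but otherwise identical.
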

\begin{proof}
We will successively apply both of the transformations introduced in the last sections. First we will deal with transport noise, then with affine-linear multiplicative noise. The crucial point is that along these transformations the equation remains in the class of inhomogeneous scalar conservation laws with source. 

We first note that there is a unique weak entropy solution $u$ to \eqref{eq:full_noise} since $\td f,\td F$ satisfy $(H1)$, $(H2^{*})$, $(H3)$. Let $v^{1}(t,x):=u(t,\psi_{t}(x)).$ Then, by Proposition \ref{prop:inner_transform}, $v^{1}$ is the unique weak entropy solution to
\[
\partial_{t}v^{1}+\Div f^{1}(t,x,v^{1})=v^{1}\nu(\psi_{t}(x))\dot{z}_{t}^{2}+g(\psi_{t}(x))\dot{z}_{t}^{3},
\]
with
\[
f^{1}(t,x,v):={D\psi_{t}^{-1}}_{|\psi_{t}(x)}f(t,\psi_{t}(x),v).
\]
We note, thanks to $\div H=0$, $\psi$ being the flow associated to $H$ and Proposition \ref{prop:div_transf} 
\[
\div f^{1}(t,x,v)=(\div f)(t,\psi_{t}(x),v)
\]
and thus $f^{1}$ and
\[
\td F^{1}(t,x,v):=F(t,\psi_{t}(x),v)+v\nu(\psi_{t}(x))\dot{z}_{t}^{2}+g(\psi_{t}(x))\dot{z}_{t}^{3}
\]
satisfy $(H1)$, $(H2^{*})$, $(H3)$. Now let $v(t,x)=e^{\mu(t,x)}v^{1}(t,x)-\vr(t,x)$. Then, by Proposition \ref{prop:affine_linear_transf}, $v$ is the unique weak entropy solution to 
\[
\partial_{t}v+\Div{}_{\vr}^{\phi}f^{\psi}(t,x,v)={}_{\vr}^{\phi}F^{\psi}(t,x,v)
\]
with $_{\vr}^{\phi}f^{\psi},{}_{\vr}^{\phi}F^{\psi}$ as in \eqref{eq:full_transf}. \textcolor{red}{}
\end{proof}

\section{Rough driving signals}

We now aim to give meaning to%
\footnote{For simplicity of the presentation we consider the case of $f$ being independent of $(t,x)$ and $F\equiv0$ in the following. The treatment of the general case, however, proceeds completely analogous.%
} 
\begin{equation}
du+\Div f(u)dt=\nabla u\cdot H(x)\circ d\mathbf{z}^{1}+u\nu\circ d\mathbf{z}^{2}+g(x)\circ d\mathbf{z}^{3}\label{eq:rough_full_noise}
\end{equation}
for \textbf{$\mathbf{z=(z^{1},z^{2},z^{3})}$} being a geometric $p$-rough path, recalling that the prototype of a (random) geometric $p$-rough path (with $p=2+\ve$) is given by Brownian motion plus its Lévy area. We will do so by considering smooth approximations $z^{n}$ of $\mathbf{z}$ in rough path metric and proving convergence of the associated approximants $u^{n}$ to a limit independent of the approximating sequence. We assume that there are $\g>p\ge1$, such that
\begin{align*}
f\in & C^{2}(\R),\\
H\in & \Lip_{b}^{\g+3}(\R^{d};\R^{N_{1}}),\nu\in\R^{N_{2}},g\in\Lip_{b}^{\g+2}(\R^{d};\R^{N_{3}}).
\end{align*}
Note that we now assume $\nu$ to be constant, which will be needed in order to establish a uniform $L^{\infty}$ bound for the approximants $u^{n}$ introduced above. Due to \cite{FV10} for any geometric $p$-rough path $\mathbf{z}\in C_{0}^{p-var}([0,T];G^{[p]}(\R^{d}))$ we may consider the flow of  diffeomorphisms
\begin{align}
d\psi_{t}^{\mathbf{z}}(x) & =-H(\mathbf{\psi}_{t}^{\mathbf{z}}(x))\circ d\mathbf{z}_{t}^{1},\quad\psi_{0}^{\mathbf{z}}(x)=x,\label{eq:flows_1}\\
d\mathbf{\phi}_{t}^{\mathbf{z}}(r) & =\mathbf{\phi}_{t}^{\mathbf{z}}(r)\nu\circ d\mathbf{z}_{t}^{2},\quad\phi_{0}^{\mathbf{z}}(r)=r,\nonumber 
\end{align}
i.e.
\[
\phi_{t}^{\mathbf{z}}(r)=re^{\nu(\mathbf{z}_{t}^{2}-\mathbf{z}_{0}^{2})}=:re^{-\mu_{t}^{\mathbf{z}}}
\]
and
\begin{equation}
\vr^{\mathbf{z}}(t,x)=\int_{0}^{t}e^{\mu_{r}^{\mathbf{z}}}g(\psi_{r}^{\mathbf{z}}(x))\circ d\mathbf{z}_{r}^{3}.\label{eq:flows_2}
\end{equation}
In order to obtain rough path stability of these diffeomorphisms we need to consider \eqref{eq:flows_1}, \eqref{eq:flows_2} ``simultaneously'' as a rough differential equation (RDE). Combining \cite[Lemma 13]{FO13} and \cite[Lemma 13]{CDFO13} we obtain%
\footnote{In fact, \cite[Lemma 13]{CDFO13} is formulated in the Hölder framework. It is, however, a simple exercise to see that an analogous result holds true also in the $p$-variation case.%
} 
\begin{lem}
\label{lem:hoelder_rp}Let $\g>p\ge1$. Assume
\begin{align*}
H & \in\Lip{}^{\g+3}(\R^{d},\R^{d\times N_{1}}),\ \nu\in\R^{N_{2}},\ g\in\Lip{}^{\g+2}(\R^{d},\R^{N_{3}}).
\end{align*}
Then for all $R>0$ there exist%
\footnote{The constants $C,K$ are non-decreasing in all arguments.%
} 
\begin{align*}
C & =C(R,\|H\|_{Lip^{\g+3}},|\nu|,\|g\|_{Lip^{\g+2}})\\
K & =K(R,\|H\|_{Lip^{\g+3}},|\nu|,\|g\|_{Lip^{\g+2}})
\end{align*}
such that for all geometric $p$-rough paths $\mathbf{y},\mathbf{z}\in C_{0}^{p-var}([0,T];G^{[p]}(\R^{d}))$ satisfying $\|\mathbf{y}\|_{p-var;[0,T]},\|\mathbf{z}\|_{p-var;[0,T]}\le R$ we have
\begin{align*}
\|D^{n}(\psi^{\mathbf{y}}-\psi^{\mathbf{z}})\|_{p-var;[0,T]} & \le C\rho_{p-var}(\mathbf{y},\mathbf{z})\\
\|D^{n}((\psi^{\mathbf{y}})^{-1}-(\psi^{\mathbf{z}})^{-1})\|_{p-var;[0,T]} & \le C\rho_{p-var}(\mathbf{y},\mathbf{z})
\end{align*}
for all $n\in\{0,1,2,3\}$ and 
\begin{align*}
\|D^{n}\psi^{\mathbf{y}}\|_{p-var;[0,T]} & \le K\\
\|D^{n}(\psi^{\mathbf{y}})^{-1}\|_{p-var;[0,T]} & \le K
\end{align*}
for all $n\in\{1,2,3\}$. Analogous properties for $\vr^{\mathbf{y}}$ (and trivially for $\phi^{\mathbf{y}}$) are satisfied.\end{lem}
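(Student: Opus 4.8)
The plan is to observe that $\psi^{\mathbf{z}}$, its inverse $(\psi^{\mathbf{z}})^{-1}$, the (explicit) flow $\phi^{\mathbf{z}}$ and the integral $\vr^{\mathbf{z}}$, together with all their spatial derivatives up to order three, solve one single parameter-dependent, high-dimensional rough differential equation driven by $\mathbf{z}$; the lemma then follows by quoting the standard local Lipschitz dependence of the It\^o--Lyons solution map on the driving rough path, with constants monotone in a bound for the rough path norm and for the vector-field norms.

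First I would write the RDE $d\psi_t^{\mathbf{z}}(x)=-H(\psi_t^{\mathbf{z}}(x))\circ d\mathbf{z}_t^1$ with frozen parameter $x$ and differentiate it in $x$. This produces linear RDEs for $D\psi^{\mathbf{z}}, D^2\psi^{\mathbf{z}}, D^3\psi^{\mathbf{z}}$ whose coefficients are polynomial in the lower-order derivatives with ingredients $H, DH, D^2H, D^3H$ evaluated along $\psi^{\mathbf{z}}$; since $H\in\Lip^{\gamma+3}$ these coefficients are $\Lip^{\gamma}$, and because $\gamma>p$ all these equations are well posed as $p$-rough differential equations. The inverse flow and its derivatives are treated the same way, either by noting that $(\psi^{\mathbf{z}})^{-1}$ solves the time-reversed equation, or by differentiating the identity $\psi_t^{\mathbf{z}}\circ(\psi_t^{\mathbf{z}})^{-1}=\mathrm{Id}$. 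The component $\phi^{\mathbf{z}}$ is explicit ($\phi_t^{\mathbf{z}}(r)=re^{-\mu_t^{\mathbf{z}}}$ with $\mu_t^{\mathbf{z}}=-\nu(\mathbf{z}_t^2-\mathbf{z}_0^2)$ a linear read-out of $\mathbf{z}^2$), and $\vr^{\mathbf{z}}$ from \eqref{eq:flows_2} enters as one further component of the coupled system, with affine vector field $e^{\mu}g(\psi)$; since $g\in\Lip^{\gamma+2}$ and $\psi^{\mathbf{z}},\mu^{\mathbf{z}}$ are already present, differentiating $\vr^{\mathbf{z}}$ in $x$ to the needed order keeps the coefficients $\Lip^{\gamma}$. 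Stacking everything yields a single RDE $d\mathbf{Y}=V(\mathbf{Y})\circ d\mathbf{z}$ with $V\in\Lip^{\gamma}$, uniformly for the relevant components on the ball $\|\mathbf{z}\|_{p\text{-var}}\le R$, using boundedness of $H,g$ and their derivatives.

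Second, I would apply the standard rough path estimates to this augmented RDE: the a priori bound $\|\mathbf{Y}^{\mathbf{z}}\|_{p\text{-var};[0,T]}\le K$ with $K=K(R,\|V\|_{\Lip^{\gamma}})$, and the local Lipschitz estimate $\|\mathbf{Y}^{\mathbf{y}}-\mathbf{Y}^{\mathbf{z}}\|_{p\text{-var};[0,T]}\le C\,\rho_{p\text{-var}}(\mathbf{y},\mathbf{z})$ valid for $\|\mathbf{y}\|_{p\text{-var}},\|\mathbf{z}\|_{p\text{-var}}\le R$, both with constants non-decreasing in their arguments; these are exactly what \cite{FV10} together with \cite[Lemma 13]{FO13} and \cite[Lemma 13]{CDFO13} provide. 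Reading off the individual components of $\mathbf{Y}$ then produces all the asserted inequalities. That the a priori bound is claimed only for $D^n\psi^{\mathbf{y}}$ with $n\ge 1$, not $n=0$, reflects that the flow itself is unbounded, whereas the \emph{difference} $\psi^{\mathbf{y}}-\psi^{\mathbf{z}}$ is controlled because both flows start from the same point. The one non-automatic point, as flagged in the footnote, is that \cite[Lemma 13]{CDFO13} is stated in the H\"older scale; its proof — based on Davie-type local expansions and Young/rough integral estimates — transfers verbatim to the $p$-variation scale, since those estimates do not distinguish $p$-variation from $1/p$-H\"older moduli.

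I expect the main, and really the only, obstacle to be the bookkeeping in the first step: checking that each spatial differentiation of the flow equation costs exactly one derivative of $H$ (respectively $g$) and no more, so that the hypotheses $H\in\Lip^{\gamma+3}$ and $g\in\Lip^{\gamma+2}$ are precisely what is needed to close the derivative system up to the required orders with $\Lip^{\gamma}$ coefficients. Once the augmented vector field is assembled, the conclusion is an immediate consequence of the continuity of the It\^o--Lyons map with constants monotone in $R$ and in the vector-field norms.
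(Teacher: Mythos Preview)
Your proposal is correct and is essentially the paper's own argument: the paper does not give a self-contained proof of this lemma but merely remarks that one should view \eqref{eq:flows_1}--\eqref{eq:flows_2} ``simultaneously'' as a single RDE and then invokes \cite[Lemma 13]{FO13} and \cite[Lemma 13]{CDFO13}, with the same footnote you reproduce about transferring the latter from the H\"older to the $p$-variation scale. Your write-up in fact spells out the derivative counting (why $\Lip^{\gamma+3}$ for $H$ and $\Lip^{\gamma+2}$ for $g$ are exactly right) more carefully than the paper does.
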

\begin{thm}
\label{thm:RP_limit}Let $T\ge0$, $u_{0}\in(L^{\infty}\cap L^{1}\cap BV)(\R^{d})$ and $\mathbf{z}=(\mathbf{z}^{1},\mathbf{z}^{2},\mathbf{z}^{3})\in C_{0}^{0,p-var}([0,T];G^{[p]}(\R^{N_{1}+N_{2}+N_{3}}))$. If $g\ne0$ assume 
\[
|\partial_{u}^{2}f(u)|\le C_{f}<\infty,\quad\forall u\in\R
\]
for some constant $C_{f}>0$. Further, let $z^{n}=(z^{1,n},z^{2,n},z^{3,n})\in C^{1}([0,T];\R^{N_{1}+N_{2}+N_{3}})$ with $z^{n}\to\mathbf{z}$ in $p$-variation rough path metric for $n\to\infty$. Let $u^{n}$ be the unique weak entropy solution%
\footnote{Recall that we may choose $u^{n}$ right-continuous in $L_{loc}^{1}(\R^{d})$.%
} to 
\[
\partial_{t}u^{n}+\Div f(u^{n})=\nabla u^{n}\cdot H(x)\dot{z}^{1,n}+u^{n}\nu\dot{z}^{2,n}+g(x)\dot{z}^{3,n}.
\]
Then:
\begin{enumerate}
\item $(u^{n})$ is a Cauchy sequence in $L^{\infty}([0,T];L_{loc}^{1}(\R^{d}))$ with limit $u$. The limit $u$ does not depend on the particular approximating sequence $z^{n}$ and $t\mapsto u_{t}$ is right-continuous in $L_{loc}^{1}(\R^{d})$. We write 
\begin{equation}
\begin{split}du+\Div f(u)dt & =\nabla u\cdot H(x)\circ d\mathbf{z}^{1}+u\nu\circ d\mathbf{z}^{2}+g(x)\circ d\mathbf{z}^{3}\\
u(0) & =u_{0}.
\end{split}
\label{eqn:limit}
\end{equation}

\item Moreover, we have $u\in L^{\infty}([0,T]\times\R^{d})$. If $\nu,g\equiv0$ then
\[
\|u\|_{L^{\infty}([0,T]\times\R^{d})}\le\|u_{0}\|_{L^{\infty}(\R^{d})}.
\]
The function $u$ has the representation
\begin{equation}
u(t,x):=\left[e^{-\mu^{\mathbf{z}}(t)}v^{\mathbf{z}}(t,\cdot)+e^{-\mu^{\mathbf{z}}(t)}\vr^{\mathbf{z}}(t,\cdot)\right]_{|\psi^{\mathbf{z}}(t,x)}\label{eq:defn_u}
\end{equation}
where $v^{\mathbf{z}}$ is the unique weak entropy solution to 
\begin{align*}
\partial_{t}v^{\mathbf{z}}(t,x)+\Div{}^{\phi^{\mathbf{z}}}f_{\vr^{\mathbf{z}}}^{\psi^{\mathbf{z}}}(t,x,v) & =0\\
v^{\mathbf{z}}(0) & =u_{0}
\end{align*}
with 
\begin{align*}
^{\phi^{\mathbf{z}}}f_{\vr^{\mathbf{z}}}^{\psi^{\mathbf{z}}}(t,x,v) & =e^{\mu^{\mathbf{z}}(t)}{D(\psi_{t}^{\mathbf{z}})^{-1}}_{|\psi_{t}^{\mathbf{z}}(x)}f(e^{-\mu^{\mathbf{z}}(t)}(v+\vr^{\mathbf{z}}(t,x))).
\end{align*}
 
\item The solution map $(\mathbf{z},u_{0})\mapsto u$ as a mapping 
\begin{align*}
C_{0}^{0,p-var}([0,T];G^{[p]}(\R^{N}))\times(L^{\infty}\cap L^{1}\cap BV)(\R^{d}) & \to L^{\infty}([0,T]\times\R^{d})
\end{align*}
endowed with the norms
\[
\|\cdot\|_{C_{0}^{0,p-var}}\times\|\cdot\|_{L^{1}(\R^{d})}\to\|\cdot\|_{L^{\infty}([0,T];L_{loc}^{1}(\R^{d}))}
\]
is continuous on balls of initial conditions with bounded total variation and bounded $L^{\infty}$ norm.
\end{enumerate}
\end{thm}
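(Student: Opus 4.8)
The plan is to reduce \eqref{eqn:limit} to the deterministic theory via the transformation of Proposition \ref{prop:full_transformation}, using the rough path stability of the associated flows from Lemma \ref{lem:hoelder_rp} to control all the $n$-dependent coefficients uniformly. Fix an approximating sequence $z^{n}\to\mathbf{z}$; being convergent, the (lifted) $z^{n}$ stay in a fixed ball of radius $R$ in $p$-variation rough path metric, so Lemma \ref{lem:hoelder_rp} applies uniformly in $n$. Since $\nu$ is constant, $f$ is independent of $(t,x)$ and $F\equiv0$, one has $\mu^{n}(t,x)=-\nu\,(z^{2,n}_{t}-z^{2,n}_{0})$ independent of $x$, hence $\nabla\mu^{n}\equiv0$, and Proposition \ref{prop:full_transformation} shows that $v^{n}(t,x):=e^{\mu^{n}(t)}u^{n}(t,\psi^{n}_{t}(x))-\vr^{n}(t,x)$ is the unique weak entropy solution of $\partial_{t}v^{n}+\Div\,{}_{\vr^{n}}^{\phi^{n}}f^{\psi^{n}}(t,x,v^{n})=0$, $v^{n}(0)=u_{0}$ (the transformed source ${}_{\vr^{n}}^{\phi^{n}}F^{\psi^{n}}$ vanishes because $\nabla\mu^{n}\equiv0$). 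The first real step is then to verify that ${}_{\vr^{n}}^{\phi^{n}}f^{\psi^{n}}$ satisfies $(H1)$, $(H2^{*})$, $(H3)$ with bounds uniform in $n$. By Lemma \ref{lem:hoelder_rp} the maps $\psi^{n},(\psi^{n})^{-1},\vr^{n}$ and their spatial derivatives up to order three are bounded uniformly in $\sup$-norm while $\mu^{n}$ is bounded; combined with the Jacobi (Piola) identity $\sum_{i}\partial_{x_{i}}\big(D(\psi^{n}_{t})^{-1}\big)_{ij}=0$, valid since $\det D\psi^{n}_{t}\equiv1$ (as $\div H=0$), the zeroth-order-in-$v$ part of $\div_{x}\,{}_{\vr^{n}}^{\phi^{n}}f^{\psi^{n}}$ drops out, and the remaining terms of $\div_{x}$ and $\partial_{v}\div_{x}$ of the flux are controlled once $|\partial_{u}^{2}f|\le C_{f}$ when $g\neq0$ (when $g\equiv0$ one has $\vr^{n}\equiv0$ and $\div_{x}\,{}^{\phi^{n}}f^{\psi^{n}}\equiv0$, so $(H3)$ is trivial). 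Granting this, Lemma \ref{lem:linfty_bound} gives $\|v^{n}\|_{L^{\infty}([0,T]\times\R^{d})}\le\mcV$ uniformly in $n$, hence $\|u^{n}\|_{L^{\infty}}\le\mcV'$ uniformly; when $\nu,g\equiv0$ one has $u^{n}=v^{n}\circ\psi^{n}$ with $\psi^{n}$ volume preserving, and the maximum principle gives $\|u^{n}(t)\|_{\infty}=\|v^{n}(t)\|_{\infty}\le\|u_{0}\|_{\infty}$.

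Next I would show $(v^{n})$ is Cauchy in $L^{\infty}([0,T];L^{1}_{loc}(\R^{d}))$ by applying Theorem \ref{thm:loc_entropy_stability} to the pair $(v^{n},v^{m})$, which carries the same initial datum $u_{0}$: the right-hand side is then bounded by $\TV(u_{0})$ times $\|\partial_{u}\big({}_{\vr^{n}}^{\phi^{n}}f^{\psi^{n}}-{}_{\vr^{m}}^{\phi^{m}}f^{\psi^{m}}\big)\|_{L^{\infty}(K_{R,M}(T,x_{0})\times[-\mcV,\mcV])}$ plus analogous integrated $\|\div(\cdot)\|_{L^{\infty}}$-differences, and since these fluxes are locally Lipschitz functionals of $\big(\psi^{n},D\psi^{n},(\psi^{n})^{-1},D(\psi^{n})^{-1},\mu^{n},\vr^{n},\nabla\vr^{n}\big)$, each of which is Cauchy in $\sup$-norm on compacts by Lemma \ref{lem:hoelder_rp}, the bound tends to $0$. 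Denote the limit $v^{\mathbf{z}}$. The flows $\psi^{\mathbf{z}},(\psi^{\mathbf{z}})^{-1},\vr^{\mathbf{z}},\mu^{\mathbf{z}}$ solving \eqref{eq:flows_1}--\eqref{eq:flows_2} share the same uniform spatial regularity and the same structural identities, so ${}_{\vr^{\mathbf{z}}}^{\phi^{\mathbf{z}}}f^{\psi^{\mathbf{z}}}$ satisfies $(H1)$, $(H2^{*})$, $(H3)$; applying Theorem \ref{thm:loc_entropy_stability} once more between $v^{n}$ and the unique weak entropy solution of $\partial_{t}v+\Div\,{}_{\vr^{\mathbf{z}}}^{\phi^{\mathbf{z}}}f^{\psi^{\mathbf{z}}}(t,x,v)=0$, $v(0)=u_{0}$ (existence and uniqueness from Proposition \ref{prop:general well posedness}, or Corollary \ref{cor:uniqueness_by_approx}), identifies $v^{\mathbf{z}}$ with it; in particular $v^{\mathbf{z}}$, and hence the limit constructed below, is independent of the approximating sequence.

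It remains to transfer back. Define $u(t,x):=\big[e^{-\mu^{\mathbf{z}}(t)}\big(v^{\mathbf{z}}(t,\cdot)+\vr^{\mathbf{z}}(t,\cdot)\big)\big]_{|\psi^{\mathbf{z}}_{t}(x)}$, which is \eqref{eq:defn_u}. To see $u^{n}\to u$ in $L^{\infty}([0,T];L^{1}_{loc})$, split $u^{n}-u$ into the change of the outer function, controlled by $v^{n}\to v^{\mathbf{z}}$ in $L^{1}_{loc}$, $\vr^{n}\to\vr^{\mathbf{z}}$ and $\mu^{n}\to\mu^{\mathbf{z}}$ uniformly on compacts together with the uniform $L^{\infty}$ bounds, and the change of the evaluation point $\psi^{n}_{t}\to\psi^{\mathbf{z}}_{t}$ uniformly on compacts, controlled by a standard density argument using that the $\psi^{n}_{t}$ are volume preserving with uniformly bounded Jacobians. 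This proves (i) and the representation in (ii); the $L^{\infty}$ bound in (ii) follows by passing to the limit a.e.\ along a subsequence, and right-continuity of $t\mapsto u(t)$ in $L^{1}_{loc}$ follows from right-continuity of $t\mapsto v^{\mathbf{z}}(t)$ and continuity of $t\mapsto(\psi^{\mathbf{z}}_{t},\mu^{\mathbf{z}}(t),\vr^{\mathbf{z}}(t))$. For (iii) one runs the same splitting for two rough paths $\mathbf{z},\mathbf{w}$ and data $u_{0},w_{0}$, now using the quantitative estimates of Lemma \ref{lem:hoelder_rp} (all flow differences $\le C\,\rho_{p-var}(\mathbf{z},\mathbf{w})$) and the quantitative form of Theorem \ref{thm:loc_entropy_stability} (producing the $\|u_{0}-w_{0}\|_{L^{1}}$ term and the $\TV(u_{0})\times$(flux difference) term), to obtain $\sup_{t}\|u(t)-w(t)\|_{L^{1}(B_{R})}\le C\big(\|u_{0}-w_{0}\|_{L^{1}}+(1+\TV(u_{0}))\,\rho_{p-var}(\mathbf{z},\mathbf{w})\big)$ with $C$ locally uniform, i.e.\ continuity on balls of bounded total variation and $L^{\infty}$-norm.

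I expect the main obstacle to be the uniform verification of $(H1)$, $(H2^{*})$, $(H3)$ for the transformed fluxes ${}_{\vr^{n}}^{\phi^{n}}f^{\psi^{n}}$ (and their limit): this is exactly where the constancy of $\nu$ (so that $\mu$ is $x$-independent and the transformed source disappears), the volume-preservation of $\psi$ through the Jacobi/Piola identity, and the bound on $\partial_{u}^{2}f$ must be exploited, and it is what legitimizes invoking Proposition \ref{prop:general well posedness} and Theorem \ref{thm:loc_entropy_stability} in the first place. A secondary delicate point is the transfer-back step, where $L^{1}_{loc}$-convergent functions are composed with uniformly convergent diffeomorphisms and one must use the uniform Jacobian bounds to retain $L^{1}_{loc}$ control.
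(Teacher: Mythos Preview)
Your proposal is correct and follows essentially the same route as the paper: transform via Proposition \ref{prop:full_transformation}, verify $(H1)$, $(H2^{*})$, $(H3)$ for the transformed fluxes with bounds uniform in $n$ using Lemma \ref{lem:hoelder_rp} (exploiting $\nabla\mu\equiv0$, volume preservation, and the bound on $\partial_{u}^{2}f$), obtain a uniform $L^{\infty}$ bound on $v^{n}$ via Lemma \ref{lem:linfty_bound}, show $(v^{n})$ Cauchy by Theorem \ref{thm:loc_entropy_stability}, transfer back, and for (iii) pass to the limit in the two-path stability estimate. The only minor addition in your outline is the explicit identification of the limit $v^{\mathbf{z}}$ with the unique entropy solution of the limiting transformed equation via a second application of Theorem \ref{thm:loc_entropy_stability}; the paper leaves this implicit in the statement of (ii), but your argument is a clean way to justify it and simultaneously establishes independence from the approximating sequence.
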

\begin{proof}
\textbf{Step 1:} Stability for the transformed solutions

We start by proving a stability estimate on the level of the robust transformation. For smooth paths $y,z\in C^{1}([0,T];\R^{N_{1}+N_{2}+N_{3}})$ let $u^{y},u^{z}$ be the corresponding weak entropy solutions to \eqref{eq:rough_full_noise}. By Proposition \ref{prop:general well posedness} (ii) we may choose $u^{y},u^{z}$ to be right-continuous in $L_{loc}^{1}(\R^{d})$. By $\mathbf{y},$\textbf{ z} we will denote the canonical lifts of $y,z$ into geometric $p$-variation rough paths in $C_{0}^{0,p-var}([0,T];G^{[p]}(\R^{N}))$ and by $(\phi^{y},\psi^{y},\vr^{y}),(\phi^{z},\psi^{z},\vr^{z})$ the corresponding flows of diffeomorphisms introduced in the beginning of this section. Let
\[
R_{\mathbf{y,}\mathbf{z}}:=\|\mathbf{y}\|_{p-var;[0,T]}\vee\|\mathbf{z}\|_{p-var;[0,T]}
\]
and $K$ be a generic constant (i.e. it may change its value from line to line) depending on $y,z$ only via $R_{\mathbf{y,}\mathbf{z}}$, i.e. $K=K(R_{\mathbf{y,}\mathbf{z}})$ non-decreasing. The dependence on further data (such as $C_{f},\|u_{0}\|_{\infty}$) will be suppressed. From Proposition \ref{prop:full_transformation} we know that the transforms
\begin{align}
v^{y}(t,x): & =e^{\mu^{y}(t)}u^{y}(t,\psi^{y}(t,x))-\vr^{y}(t,x)\label{eq:v_def}\\
v^{z}(t,x): & =e^{\mu^{z}(t)}u^{z}(t,\psi^{z}(t,x))-\vr^{z}(t,x)\nonumber 
\end{align}
are solutions to
\begin{equation}
\partial_{t}v(t,x)+\Div^{\phi}f_{\vr}^{\psi}(t,x,v)=0\label{eq:robust}
\end{equation}
with $(\phi,\psi,\vr)=(\phi^{y},\psi^{y},\vr^{y}),(\phi^{z},\psi^{z},\vr^{z})$ respectively. From \eqref{eq:full_transf} it follows
\begin{equation}
^{\phi}F_{\vr}^{\psi}(t,x,v)\equiv0\label{eq:F_zero}
\end{equation}
since $\nabla\mu\equiv0$, due to $\nu$ being constant. For notational convenience we set
\begin{align*}
f^{y}: & ={}^{\phi^{y}}f_{\vr^{y}}^{\psi^{y}},\ f^{z}:={}^{\phi^{z}}f_{\vr^{z}}^{\psi^{z}}
\end{align*}
and we compute
\[
\div f^{y}(t,x,v)={D(\psi_{t}^{y})^{-1}}_{|\psi_{t}^{y}(x)}\dot{f}(e^{-\mu^{y}(t)}(v+\vr^{y}(t,x)))\cdot\nabla\vr^{y}(t,x)
\]
and analogously for $f^{z}$. Note that the $L^{\infty}$ bound on $u^{y}$ following from Lemma \ref{lem:linfty_bound} (and thus the one obtained for $v^{y}$ based on this) is given in terms of (cf. \eqref{eq:f_td} with $F\equiv0$)
\[
\|\td F(\cdot,\cdot,0)\|_{L^{\infty}([0,T]\times\R^{d})}=\|g\dot{y}^{3}\|_{\infty}
\]
which is unstable in $y$ in rough paths metric (similarly for $u^{z}$). Instead we need to derive an estimate on the $L^{\infty}$ norm of $v^{y},v^{z}$ based on the robust form \eqref{eq:robust}. For this we note that $f^{y},f^{z}$ satisfy $(H1)$, $(H2^{*})$ with $F\equiv0$ and to check $(H3)$ we compute 
\begin{align*}
\|\div f^{y}(\cdot,\cdot,0)\|_{L^{\infty}([0,T]\times\R^{d})} & =\|{D(\psi^{y})^{-1}}_{|\psi^{y}}\dot{f}(e^{-\mu^{y}}\vr^{y})\cdot\nabla\vr^{y}\|_{\infty}\\
 & \le\|{D(\psi^{y})^{-1}}_{|\psi^{y}}\|_{\infty}\|\dot{f}(e^{-\mu^{y}}\vr^{y})\|_{\infty}\|\nabla\vr^{y}\|_{\infty}\\
 & \le K<\infty.
\end{align*}
and%
\footnote{At this point we require the assumption $|\partial_{u}^{2}f|\le C_{f}$. If $g\equiv\text{0}$ then $\vr^{y}\equiv0$ and thus $\div f^{y}\equiv0$ so that this condition may be dropped.%
} (with $\bullet=v\in\R)$
\begin{align*}
\|\partial_{v}\div f^{y}\|_{L^{\infty}([0,T]\times\R^{d}\times\R)} & =\|e^{-\mu^{y}}{D(\psi^{y})^{-1}}_{|\psi^{y}}\ddot{f}(e^{-\mu^{y}}(\bullet+\vr^{y}))\cdot\nabla\vr^{y}\|_{\infty}\\
 & \le\|e^{-\mu^{y}}\|_{\infty}\|{D(\psi^{y})^{-1}}_{|\psi^{y}}\|_{\infty}\|\ddot{f}(e^{-\mu^{y}}(\bullet+\vr^{y}))\|_{\infty}\|\nabla\vr^{y}\|_{\infty}\\
 & \le C_{f}\|e^{-\mu^{y}}\|_{\infty}\|{D(\psi^{y})^{-1}}_{|\psi^{y}}\|_{\infty}\|\nabla\vr^{y}\|_{\infty}\\
 & \le K<\infty.
\end{align*}
Hence,
\begin{align*}
\|\div f^{y}(\cdot,\cdot,0)\|_{\infty}+\|\partial_{v}\div f^{y}\|_{\infty} & \le K<\infty
\end{align*}
and similarly for $z$ instead of $y$. From Lemma \ref{lem:linfty_bound} we conclude%
\footnote{Note that at this point \eqref{eq:F_zero} and thus $\nu$ being constant is crucial.%
}
\begin{equation}
\mcV:=\|v^{y}\|_{\infty}\vee\|v^{z}\|_{\infty}\le K<\infty\label{eq:uniform_linfty_bound}
\end{equation}
as required. Set 
\[
\O_{\mcV}:=[0,T]\times\R^{d}\times[-\mcV,\mcV].
\]
In order to apply Theorem \ref{thm:loc_entropy_stability} we first verify that the constants $\k^{*},\k_{0}^{*}$ appearing therein are bounded in terms of $K$. We observe (with $\bullet=v\in[-\mcV,\mcV])$
\begin{align*}
 & \|\partial_{v}f^{z}\|_{L^{\infty}(\O_{\mcV})}\\
 & =\|{D(\psi^{z})^{-1}}_{|\psi^{z}}\dot{f}(e^{-\mu^{z}}(\bullet+\vr^{z}))\|_{^{\infty}}\\
 & \le\|{D(\psi^{z})^{-1}}_{|\psi^{z}}\|_{\infty}\|\dot{f}(e^{-\mu^{z}}(\bullet+\vr^{z}))\|_{^{\infty}}\\
 & \le K<\infty
\end{align*}
and 
\begin{align*}
 & (2d+1)\|\nabla\partial_{v}f^{y}\|_{L^{\infty}(\O_{\mcV})}\\
 & =(2d+1)\left\Vert \nabla\left({D(\psi^{y})^{-1}}_{|\psi^{y}}\dot{f}(e^{-\mu^{y}}(\bullet+\vr^{y}))\right)\right\Vert _{^{\infty}}\\
 & \le K<\infty.
\end{align*}
Since $f^{y},f^{z}$ satisfy $(H1)$, $(H2^{*})$, $(H3)$ we may apply Theorem \ref{thm:loc_entropy_stability} (ii) to obtain
\begin{equation}
\begin{split} & \sup_{t\in[0,T]}\int_{B_{R}(x_{0})}|v^{y}(t,x)-v^{z}(t,x)|dx\\
 & \le e^{KT}\int_{B_{R+KT}(x_{0})}|u_{0}^{y}(x)-u_{0}^{z}(x)|dx\\
 & +Te^{KT}\|\partial_{v}(f^{y}-f^{z})\|_{L^{\infty}([0,T]\times B_{R+KT}(x_{0})\times[-\mcV,\mcV])}\\
 & \times\Bigg[\TV(u_{0}^{y})+C\int_{0}^{T}\int_{B_{R+KT}(x_{0})}\|\nabla\div f^{y}(t,x,\cdot)\|_{L^{\infty}([-\mcV,\mcV])}dxdt\Bigg]\\
 & +e^{KT}\int_{0}^{T}\int_{B_{R+KT}(x_{0})}\|\div(f^{y}-f^{z})(t,x\cdot)\|_{L^{\infty}([-\mcV,\mcV])}dxdt,
\end{split}
\label{eq:smooth_stability}
\end{equation}
 for all $R>0,x_{0}\in\R^{d}$. In order to bound the right hand side we note
\[
\partial_{v}f^{y}(t,x,v)={D(\psi_{t}^{y})^{-1}}_{|\psi_{t}^{y}(x)}\dot{f}(e^{-\mu_{t}^{y}}(v+\vr^{y}(t,x))).
\]
Hence, using crucially the rough paths estimates collected in Lemma \ref{lem:hoelder_rp} (with $\bullet=v\in[-\mcV,\mcV])$ 
\begin{align*}
 & \|\partial_{v}(f^{y}-f^{z})\|_{L^{\infty}([0,T]\times B_{R+KT}(x_{0})\times[-\mcV,\mcV])}\\
= & \|{D(\psi^{y})^{-1}}_{|\psi^{y}}\dot{f}(e^{-\mu^{y}}(\bullet+\vr^{y}))-{D(\psi^{z})^{-1}}_{|\psi^{z}}\dot{f}(e^{-\mu^{z}}(\bullet+\vr^{z}))\|_{\infty}\\
\le & \|{D(\psi^{y})^{-1}}_{|\psi^{y}}\|_{\infty}\|\dot{f}(e^{-\mu^{y}}(\bullet+\vr^{y}))-\dot{f}(e^{-\mu^{z}}(\bullet+\vr^{z}))\|_{\infty}\\
 & +\|{D(\psi^{y})^{-1}}_{|\psi^{y}}-{D(\psi^{z})^{-1}}_{|\psi^{z}}\|_{\infty}\|\dot{f}(e^{-\mu^{y}}(\bullet+\vr^{y}))\|_{\infty}\\
\le & KC_{f}\|(e^{-\mu^{y}}-e^{-\mu^{z}})\bullet+e^{-\mu^{y}}\vr^{y}-e^{-\mu^{z}}\vr^{z}\|_{\infty}\\
 & +K\|{D(\psi^{y})^{-1}}_{|\psi^{y}}-{D(\psi^{z})^{-1}}_{|\psi^{z}}\|_{\infty}\\
\le & K\rho_{p-var}(\mathbf{y},\mathbf{z}).
\end{align*}
Similarly,
\begin{align*}
 & \|\div(f^{y}-f^{z})\|_{L^{\infty}([0,T]\times B_{R+KT}(x_{0})\times[-\mcV,\mcV])}\\
= & \|{D(\psi^{y})^{-1}}_{|\psi^{y}}\dot{f}(e^{-\mu^{y}}(\bullet+\vr^{y}))\cdot\nabla\vr^{y}-{D(\psi^{z})^{-1}}_{|\psi^{z}}\dot{f}(e^{-\mu^{z}}(\bullet+\vr^{z}))\cdot\nabla\vr^{z}\|_{\infty}\\
\le & K\rho_{p-var}(\mathbf{y},\mathbf{z}).
\end{align*}
Due to Lemma \ref{lem:hoelder_rp} we further have (recall $K=K(R_{\mathbf{y,}\mathbf{z}})$)
\[
\|\nabla\div f^{y}\|_{L^{\infty}([0,T]\times\R^{d}\times[-\mcV,\mcV])}\le K<\infty.
\]
We obtain from \eqref{eq:smooth_stability}
\begin{align}
 & \sup_{t\in[0,T]}\int_{B_{R}}|v^{y}(t,x)-v^{z}(t,x)|dx\label{eq:local_v_stab}\\
\le & e^{KT}\int_{B_{R+KT}}|u_{0}^{y}(x)-u_{0}^{z}(x)|dx\nonumber \\
 & +Te^{KT}\rho_{p-var}(\mathbf{y},\mathbf{z})\Big(\TV(u_{0}^{y})+|B_{R+KT}(0)|\Big).\nonumber 
\end{align}
\textbf{Step 2:} Proof of (i)

Let $\mathbf{z}=(\mathbf{z}^{1},\mathbf{z}^{2},\mathbf{z}^{3})\in C_{0}^{0,p-var}([0,T];G^{[p]}(\R^{N_{1}+N_{2}+N_{3}}))$ and $z^{n}=(z^{1,n},z^{2,n},z^{3,n})\in C^{1}([0,T];\R^{N_{1}+N_{2}+N_{3}})$ with $z^{n}\to\mathbf{z}$ in $p$-variation rough path metric for $n\to\infty$. Let $u^{n}$ be the unique weak entropy solution, right-continuous in $L_{loc}^{1}(\R^{d})$, to 
\begin{align*}
\partial_{t}u^{n}+\Div f(u^{n}) & =\nabla u^{n}\cdot H(x)\dot{z}^{1,n}+u^{n}\nu\dot{z}^{2,n}+g(x)\dot{z}^{3,n}\\
u_{0}^{n} & =u_{0}.
\end{align*}
As in \eqref{eq:v_def} we define the transforms $v^{n}$, that are solutions to scalar conservation laws of the type \eqref{eq:robust}. From \eqref{eq:local_v_stab} we obtain
\begin{align*}
\sup_{t\in[0,T]}\int_{B_{R}}|v^{n}(t,x)-v^{m}(t,x)|dx\le e^{KT} & T\rho_{p-var}(\mathbf{\mathbf{z}}^{n},\mathbf{z}^{m})\Big(\TV(u_{0})+|B_{R+KT}(0)|\Big),
\end{align*}
for all $n,m\in\N$, where $K$ is a constant independent of $n,m$. In particular, the sequence $v^{n}$ is a Cauchy sequence in $L^{\infty}([0,T];L_{loc}^{1}(\R^{d}))$. Hence, there is a $v\in L^{\infty}([0,T];L_{loc}^{1}(\R^{d}))$ such that
\[
\sup_{t\in[0,T]}\int_{B_{R}}|v^{n}(t)-v(t)|dx\to0,\quad\text{for }n\to\infty,
\]
for all $R>0$. Since $t\mapsto v_{t}^{n}$ is right-continuous in $L_{loc}^{1}(\R^{d})$ so is $t\mapsto v_{t}$. It remains to be proven that this implies $L^{\infty}([0,T];L_{loc}^{1}(\R^{d}))$-convergence for $u^{n}$. Let $u$ be as in \eqref{eq:defn_u} and recall 
\[
u^{n}(t,x)=\left[e^{-\mu^{z^{n}}(t)}v^{z^{n}}(t,\cdot)+e^{-\mu^{z^{n}}(t)}\vr^{z^{n}}(t,\cdot)\right]_{|\psi^{z^{n}}(t,x)}.
\]
Since
\begin{align*}
 & u(t,\psi^{\mathbf{z}}(t,x))-u^{n}(t,\psi^{z^{n}}(t,x))\\
= & e^{-\mu^{\mathbf{z}}(t)}\left(v^{\mathbf{z}}(t,x)+\vr^{\mathbf{z}}(t,x)\right)-e^{-\mu^{z^{n}}(t)}\left(v^{z^{n}}(t,x)+\vr^{z^{n}}(t,x)\right)\\
= & e^{-\mu^{\mathbf{z}}(t)}\left(v^{\mathbf{z}}(t,x)-v^{z^{n}}(t,x)+\vr^{\mathbf{z}}(t,x)-\vr^{z^{n}}(t,x)\right)\\
 & +(e^{-\mu^{\mathbf{z}}(t)}-e^{-\mu^{z^{n}}(t)})\left(v^{z^{n}}(t,x)+\vr^{z^{n}}(t,x)\right)
\end{align*}
we have
\[
u^{n}(t,\psi^{z^{n}}(t,x))\to u(t,\psi^{\mathbf{z}}(t,x))
\]
in $L^{\infty}([0,T];L_{loc}^{1}(\R^{d}))$. Since $\psi^{z^{n}}\to\psi$ in the sense of homeomorphisms we obtain
\begin{align*}
\sup_{t\in[0,T]}\int_{B_{R}}|u^{n}(t,x)-u(t,x)|dx= & \sup_{t\in[0,T]}\int_{B_{R}}|u^{n}(t,\psi_{t}^{z^{n}}(x))-u(\psi_{t}^{z^{n}}(x))|dx\\
\le & \sup_{t\in[0,T]}\int_{B_{R}}|u^{n}(t,\psi_{t}^{z^{n}}(x))-u(\psi_{t}^{\mathbf{z}}(x))|dx\\
 & +\sup_{t\in[0,T]}\int_{B_{R}}|u(t,\psi_{t}^{z^{n}}(x))-u(t,\psi_{t}^{\mathbf{z}}(x))|dx\\
\to & 0,
\end{align*}
for $n\to\infty$ for all $R>0$ and the convergence is locally uniform with respect to $R$.

\textbf{Step 3:} Proof of (ii)

The claimed $L^{\infty}$-boundedness of $u$ follows from \eqref{eq:defn_u}, the uniform upper bound \eqref{eq:uniform_linfty_bound} and Lemma \ref{lem:hoelder_rp}. If $\nu,g\equiv0$ then $\div f^{z^{n}}\equiv0$ and it is easy to derive the claimed bound by methods similar to Lemma \ref{lem:linfty_bound}. 

\textbf{Step 4:} Proof of (iii)

Let now $\mathbf{y},\mathbf{z}\in C_{0}^{0,p-var}([0,T];G^{[p]}(\R^{N_{1}+N_{2}+N_{3}}))$ and $y^{n},z^{n}\in C^{1}([0,T];\R^{N_{1}+N_{2}+N_{3}})$ with $y^{n}\to\mathbf{y}$, $z^{n}\to\mathbf{z}$ in $p$-variation rough path metric for $n\to\infty$. From \eqref{eq:local_v_stab} we obtain
\begin{align*}
 & \sup_{t\in[0,T]}\int_{B_{R}}|v^{y^{n}}(t,x)-v^{z^{n}}(t,x)|dx\\
 & \le\int_{B_{R+MT}}|u_{0}^{1}(x)-u_{0}^{2}(x)|dx+K\rho_{p-var}(\mathbf{y}^{n},\mathbf{z}^{n})\Big(\TV(u_{0}^{1})+|B_{R+MT}(0)|\Big).
\end{align*}
Taking the limit $n\to\infty$ we obtain
\begin{align*}
 & \sup_{t\in[0,T]}\int_{B_{R}}|v^{\mathbf{y}}(t,x)-v^{\mathbf{z}}(t,x)|dx\\
 & \le\int_{B_{R+MT}}|u_{0}^{1}(x)-u_{0}^{2}(x)|dx+K\rho_{p-var}(\mathbf{y},\mathbf{z})\Big(\TV(u_{0}^{1})+|B_{R+MT}(0)|\Big),
\end{align*}
which implies the claimed local uniform continuity, but for $u^{\mathbf{y}}$ replaced by $v^{\mathbf{y}}$. Arguing as in step two this finishes the proof.
\end{proof}
As immediate consequences of the continuity of the solution mapping with respect to the driving rough path we obtain support results, large deviation results, stochastic scalar conservation laws driven by fractional Brownian motion with Hurst parameter $H$, covering the rough regime $ $$H\in(\frac{1}{4},\frac{1}{2})$. For more details on this we refer to \cite{FO13,CFO11}.

\section{Rate of convergence}

In Theorem \ref{thm:RP_limit} we have obtained the convergence $u^{n}\to u$ in $L^{\infty}([0,T];L_{loc}^{1}(\R^{d}))$ under the assumption of rough paths convergence of the driving rough paths. However, no estimate on the speed of convergence, as it would be crucial for any numerical approximation based on smoothing the noise, was derived. In this section we provide such a quantitative stability estimate. For simplicity we restrict to pure transport noise and Hölder rough paths, i.e. we consider stochastic scalar conservation laws of the type
\begin{align}
du+\Div f(u)dt & =\nabla u\cdot H(x)\circ d\mathbf{z},\label{eq:rough_transport}\\
u(0) & =u_{0}\nonumber 
\end{align}
for \textbf{$\mathbf{z}$} being a geometric $\frac{1}{p}$-Hölder rough path and $f,H$ as before. 
\begin{thm}
\label{thm:rate}For any two rough paths \textbf{$\mathbf{z}^{1},\mathbf{z}^{2}$} we let $u^{1},u^{2}$ be the corresponding solutions to \eqref{eq:rough_transport} with initial data $u_{0}^{1},u_{0}^{2}\in(L^{\infty}\cap L^{1}\cap BV)(\R^{d})$ respectively as constructed in Theorem \ref{thm:RP_limit}. For each $R>0$ there is a $K=K(R)>0$ such that
\begin{align*}
\sup_{t\in[0,T]}\|u^{1}(t)-u^{2}(t)\|_{L^{1}(\R^{d})}\le & \|u_{0}^{1}-u_{0}^{2}\|_{L^{1}(\R^{d})}+K\TV(u_{0}^{1})\rho_{p-var}(\mathbf{z}^{1},\mathbf{z}^{2}),
\end{align*}
whenever $\max_{i=1,2}\|\mathbf{z}^{i}\|_{\frac{1}{p}-H\ddot{o}l;[0,T]}\le R$.\end{thm}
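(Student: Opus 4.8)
The plan is to reduce, via the robust reformulation of Section~\ref{sec:transformation}, to a \emph{global} $L^{1}$-stability estimate between two inhomogeneous scalar conservation laws with divergence-free fluxes and vanishing sources, then to apply the localized deterministic estimate of Appendix~\ref{sec:app_det_SCL} and let the localization radius tend to infinity, and finally to undo the change of variables. Since here $\nu,g\equiv 0$, all of $\mu^{\mathbf{z}^{i}},\vr^{\mathbf{z}^{i}},\phi^{\mathbf{z}^{i}}$ are trivial, and Theorem~\ref{thm:RP_limit}(ii) (the rough analogue of Proposition~\ref{prop:inner_transform}) yields, for $v^{i}(t,x):=u^{i}(t,\psi^{\mathbf{z}^{i}}_{t}(x))$, that $v^{i}$ is the unique weak entropy solution of
\[
\partial_{t}v^{i}+\Div f^{i}(t,x,v^{i})=0,\qquad v^{i}(0)=u_{0}^{i},\qquad f^{i}(t,x,v):={D(\psi^{\mathbf{z}^{i}}_{t})^{-1}}_{|\psi^{\mathbf{z}^{i}}_{t}(x)}\,f(v).
\]
Two structural facts will be used repeatedly: $\psi^{\mathbf{z}^{i}}_{t}$ is volume preserving (as $\div H=0$), and $\div_{x}f^{i}\equiv 0$ (Proposition~\ref{prop:div_transf}, using that $f=f(v)$ is $x$-independent). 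Because a $\tfrac1p$-H\"older rough path on $[0,T]$ has finite $p$-variation, the hypothesis $\max_{i}\|\mathbf{z}^{i}\|_{\frac1p\text{-H\"ol};[0,T]}\le R$ gives $\max_{i}\|\mathbf{z}^{i}\|_{p-var;[0,T]}\le R'=R'(R,T)$, so Lemma~\ref{lem:hoelder_rp} applies with radius $R'$; throughout $K=K(R)$ denotes a constant, changing from line to line, depending only on $R$ and the fixed data $f,H,T$ (and upper bounds for $\|u_{0}^{i}\|_{\infty}$).

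Next I apply Theorem~\ref{thm:loc_entropy_stability}(ii) to $v^{1},v^{2}$, exactly as in Step~1 of the proof of Theorem~\ref{thm:RP_limit} (estimate \eqref{eq:smooth_stability}). The decisive simplification for pure transport noise is that $\div f^{i}\equiv 0$ and $F^{i}\equiv 0$: the source/divergence integrals $\int_{0}^{T}\!\int_{B}\|\nabla\div f^{1}\|_{\infty}$ and $\int_{0}^{T}\!\int_{B}\|\div(f^{1}-f^{2})\|_{\infty}$ vanish, the exponential prefactor in front of the initial-data term equals $1$ since it is governed by $\partial_{v}(\div f^{i}-F^{i})\equiv 0$, and the remaining constants ($\kappa_{0}^{*}$, the cone slope $M$, and the $\TV$-growth factor) are bounded by $K$ through the derivative bounds of Lemma~\ref{lem:hoelder_rp}. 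What survives is
\[
\sup_{t\in[0,T]}\int_{B_{R}(x_{0})}|v^{1}(t,x)-v^{2}(t,x)|\,dx\le\int_{B_{R+MT}(x_{0})}|u_{0}^{1}-u_{0}^{2}|\,dx+K\,\|\partial_{v}(f^{1}-f^{2})\|_{L^{\infty}([0,T]\times\R^{d}\times[-\mcV,\mcV])}\,\TV(u_{0}^{1}),
\]
where $\mcV:=\|u_{0}^{1}\|_{\infty}\vee\|u_{0}^{2}\|_{\infty}\ge\|v^{1}\|_{\infty}\vee\|v^{2}\|_{\infty}$. Since $\partial_{v}f^{i}(t,x,v)={D(\psi^{\mathbf{z}^{i}}_{t})^{-1}}_{|\psi^{\mathbf{z}^{i}}_{t}(x)}\dot f(v)$, Lemma~\ref{lem:hoelder_rp} gives $\|\partial_{v}(f^{1}-f^{2})\|_{\infty}\le\|{D(\psi^{\mathbf{z}^{1}})^{-1}}_{|\psi^{\mathbf{z}^{1}}}-{D(\psi^{\mathbf{z}^{2}})^{-1}}_{|\psi^{\mathbf{z}^{2}}}\|_{\infty}\,\|\dot f\|_{L^{\infty}([-\mcV,\mcV])}\le K\,\rho_{p-var}(\mathbf{z}^{1},\mathbf{z}^{2})$. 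Letting $R\to\infty$ (legitimate precisely because no volume-of-ball term is present, in contrast to \eqref{eq:local_v_stab}), monotone convergence yields
\[
\sup_{t\in[0,T]}\|v^{1}(t)-v^{2}(t)\|_{L^{1}(\R^{d})}\le\|u_{0}^{1}-u_{0}^{2}\|_{L^{1}(\R^{d})}+K\,\TV(u_{0}^{1})\,\rho_{p-var}(\mathbf{z}^{1},\mathbf{z}^{2}).
\]

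It then remains to undo the transformation. Fix $t$; substituting $y=\psi^{\mathbf{z}^{2}}_{t}(x)$ (volume preserving) and writing $\Theta_{t}:=(\psi^{\mathbf{z}^{1}}_{t})^{-1}\circ\psi^{\mathbf{z}^{2}}_{t}$,
\[
\|u^{1}(t)-u^{2}(t)\|_{L^{1}(\R^{d})}=\int_{\R^{d}}|v^{1}(t,\Theta_{t}(x))-v^{2}(t,x)|\,dx\le\int_{\R^{d}}|v^{1}(t,\Theta_{t}(x))-v^{1}(t,x)|\,dx+\|v^{1}(t)-v^{2}(t)\|_{L^{1}(\R^{d})},
\]
the second term being the estimate just obtained. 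For the first, $\Theta_{t}$ is a volume-preserving $C^{1}$-diffeomorphism with $\|\Theta_{t}-\mathrm{id}\|_{\infty}\le\|D(\psi^{\mathbf{z}^{1}}_{t})^{-1}\|_{\infty}\|\psi^{\mathbf{z}^{2}}_{t}-\psi^{\mathbf{z}^{1}}_{t}\|_{\infty}\le K\rho_{p-var}(\mathbf{z}^{1},\mathbf{z}^{2})$ and likewise $\|D\Theta_{t}-\mathrm{Id}\|_{\infty}\le K\rho_{p-var}(\mathbf{z}^{1},\mathbf{z}^{2})$, by Lemma~\ref{lem:hoelder_rp}. Combined with the standard estimate for the action of $C^{1}$-diffeomorphisms on $BV$ functions (cf.\ \cite{FO13,CFO11}), $\|w\circ\Theta_{t}-w\|_{L^{1}}\le K\,\TV(w)\,\|\Theta_{t}-\mathrm{id}\|_{\infty}$ applied to $w=v^{1}(t)$, and with the bound $\sup_{t\in[0,T]}\TV(v^{1}(t))\le K\,\TV(u_{0}^{1})$ from the $BV$-theory of conservation laws with divergence-free flux (Appendix~\ref{sec:app_det_SCL}, using $\div f^{1}\equiv0$ and $v^{1}(0)=u_{0}^{1}$), the first term is $\le K\,\TV(u_{0}^{1})\,\rho_{p-var}(\mathbf{z}^{1},\mathbf{z}^{2})$. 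Taking the supremum over $t\in[0,T]$ (all bounds being uniform in $t$) gives the claim. Note the asymmetric appearance of $\TV(u_{0}^{1})$ is forced by this choice of substitution, which pairs the displacement term with $\TV(v^{1})$.

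The main obstacle is the globalization step: to reach a genuine $L^{1}(\R^{d})$ estimate with constant exactly $1$ in front of $\|u_{0}^{1}-u_{0}^{2}\|_{L^{1}}$ and no surviving $|B_{R+MT}|$ term, one must check that in \eqref{eq:smooth_stability} every contribution other than the initial-data difference and the flux-perturbation term vanishes, which is exactly what $\div f^{i}\equiv0$ and $F^{i}\equiv0$ buy, \emph{and} that every hidden constant ($\kappa^{*}$, $\kappa_{0}^{*}$, $M$, the $\TV$-growth factor) is bounded by $K(R)$ alone via Lemma~\ref{lem:hoelder_rp}. The secondary technical point is the $BV$-displacement estimate together with the a priori bound $\TV(v^{1}(t))\le K\TV(u_{0}^{1})$; both are standard in this circle of ideas but rely on the flow regularity supplied by Lemma~\ref{lem:hoelder_rp} (together with, for $\|D\Theta_{t}-\mathrm{Id}\|_{\infty}$ small enough, the usual Jacobian estimate in the displacement bound, the complementary regime being absorbed into $K$).
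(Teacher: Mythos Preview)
Your overall strategy coincides with the paper's: pass to the transformed solutions $v^{i}(t,x)=u^{i}(t,\psi^{\mathbf{z}^{i}}_{t}(x))$, exploit $\div f^{i}\equiv0$ and $F^{i}\equiv0$ so that Theorem~\ref{thm:loc_entropy_stability} yields a global $L^{1}$ estimate between $v^{1}$ and $v^{2}$ with constant~$1$ in front of the initial data, and then control the displacement term that arises when undoing the change of variables. The first two parts are essentially identical to the paper's Steps~1--2.

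The gap is in your treatment of the displacement term $\int_{\R^{d}}|v^{1}(t,\Theta_{t}(x))-v^{1}(t,x)|\,dx$. The inequality $\|w\circ\Theta-w\|_{L^{1}}\le K\,\TV(w)\,\|\Theta-\mathrm{id}\|_{\infty}$ is \emph{not} available for a general volume-preserving $C^{1}$-diffeomorphism $\Theta$: the standard argument writes $w(\Theta(x))-w(x)=\int_{0}^{1}\nabla w(\lambda\Theta(x)+(1-\lambda)x)\cdot(\Theta(x)-x)\,d\lambda$ and then needs $x\mapsto\lambda\Theta(x)+(1-\lambda)x$ to be a diffeomorphism with Jacobian bounded below, which forces $\|D\Theta-\mathrm{Id}\|_{\infty}$ to be small. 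Your proposed fallback (``the complementary regime being absorbed into $K$'') does not close the estimate: when $\rho_{p-var}(\mathbf{z}^{1},\mathbf{z}^{2})$ is merely bounded, the only crude bound available is $\|v^{1}(t,\Theta_{t})-v^{1}(t)\|_{L^{1}}\le2\|v^{1}(t)\|_{L^{1}}\le2\|u_{0}^{1}\|_{L^{1}}$, which cannot be dominated by $K(R)\,\TV(u_{0}^{1})\,\rho_{p-var}$ since $K$ is not allowed to depend on $\|u_{0}^{1}\|_{L^{1}}$.

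The paper fills exactly this hole by exploiting the \emph{path} structure of $s\mapsto\Phi_{s}:=(\psi_{s}^{\mathbf{z}^{2}})^{-1}\circ\psi_{s}^{\mathbf{z}^{1}}$: this is a $\tfrac1p$-H\"older path in $C^{1}(\R^{d})$ starting at the identity, with H\"older seminorm controlled by $K\rho_{p-var}(\mathbf{z}^{1},\mathbf{z}^{2})$. One then partitions $[0,t]$ into $N=\lceil T(2K)^{p}\rho_{p-var}^{p}\rceil$ equal pieces so that on each subinterval $\|D\Phi_{t_{i+1}}-D\Phi_{t_{i}}\|_{\infty}\le\tfrac12$; this guarantees that $\lambda\Phi_{t_{i+1}}+(1-\lambda)\Phi_{t_{i}}$ is a diffeomorphism with Jacobian $\ge\tfrac12$, so the one-step estimate is valid. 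Telescoping over the $N\le K(R)$ pieces yields the desired bound with $K$ depending only on $R$. This is precisely the place where the H\"older hypothesis on the rough paths (as opposed to mere $p$-variation) is used, and it is the main technical point of the theorem that your sketch elides. The passage from smooth $w$ to $v^{1}(t)\in BV$ is then done via intermediate convergence and Theorem~\ref{thm:BV-bound}.
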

\begin{proof}
Let $u^{1},u^{2}$ be the solutions to \eqref{eq:rough_transport} with initial conditions $u_{0}^{1},u_{0}^{2}$ driven by \textbf{$\mathbf{z}^{1},\mathbf{z}^{2}$ }and let
\begin{align*}
v^{1}(t,x) & =u^{1}(t,\psi_{t}^{\mathbf{z}^{1}}(x))\\
v^{2}(t,x) & =u^{2}(t,\psi_{t}^{\mathbf{z}^{2}}(x))
\end{align*}
as in \eqref{eq:v_def}. As in the proof of Theorem \ref{thm:RP_limit} we let
\[
R_{\mathbf{z}^{1},\mathbf{z}^{2}}:=\|\mathbf{z}^{1}\|_{\frac{1}{p}-H\ddot{o}l;[0,T]}\vee\|\mathbf{z}^{2}\|_{\frac{1}{p}-H\ddot{o}l;[0,T]}
\]
and $K$ be a generic constant depending only (increasingly) on $R_{\mathbf{z}^{1},\mathbf{z}^{2}}$. Again, dependence on further data will be suppressed. Moreover, we set
\begin{align*}
f^{i}(t,x,v)=f^{\psi^{\mathbf{\mathbf{z}}^{i}}}(t,x,v) & ={D(\psi_{t}^{\mathbf{z}^{i}})^{-1}}_{|\psi_{t}^{\mathbf{\mathbf{z}^{i}}}(x)}f(v)\quad i=1,2.
\end{align*}
We note that $\div f^{i}\equiv0$, $i=1,2$. Hence, with $F\equiv0$ the assumptions $(H1)$, $(H3)$ and the estimates in $(H2)$, $(H2^{*})$ are trivially satisfied. Moreover, the other regularity assumptions contained in Hypothesis \ref{hyp:H0-H2} are also easily seen to be satisfied using Lemma \ref{lem:hoelder_rp}. Taking the rough paths limit in \eqref{eq:smooth_stability} (noting $\div f^{i}\equiv0$) yields
\[
\begin{split}\int_{B_{R}(x_{0})}|v^{1}(t,x)-v^{2}(t,x)|dx\le & \int_{B_{R+MT}(x_{0})}|u_{0}^{1}(x)-u_{0}^{2}(x)|dx\\
 & +Te^{KT}\|\partial_{v}(f^{1}-f^{2})\|_{L^{\infty}([0,T]\times\R^{d}\times[-\mcV,\mcV])}\TV(u_{0}^{1}),
\end{split}
\]
for all $t\in[0,T]$. Noting
\begin{align*}
\|\partial_{v}(f^{1}-f^{2})\|_{L^{\infty}([0,T]\times\R^{d}\times[-\mcV,\mcV])}= & \|{D(\mathbf{\psi}^{\mathbf{\mathbf{\mathbf{z}}}^{1}})^{-1}}_{|\psi^{\mathbf{\mathbf{\mathbf{z}}}^{1}}}\dot{f}-{D(\psi^{\mathbf{\mathbf{\mathbf{z}}}^{2}})^{-1}}_{|\psi^{\mathbf{\mathbf{\mathbf{z}}}^{2}}}\dot{f}\|_{\infty}\\
\le & \|\dot{f}\|_{L^{\infty}([-\mcV,\mcV])}\|{D(\psi^{\mathbf{\mathbf{\mathbf{z}}}^{1}})^{-1}}_{|\psi^{\mathbf{\mathbf{\mathbf{z}}}^{1}}}-{D(\psi^{\mathbf{\mathbf{\mathbf{z}}}^{2}})^{-1}}_{|\psi^{\mathbf{\mathbf{\mathbf{z}}}^{2}}}\|_{\infty}\\
\le & K\rho_{p-var}(\mathbf{\mathbf{\mathbf{z}}}^{1},\mathbf{\mathbf{\mathbf{z}}}^{2})
\end{align*}
we obtain%
\footnote{We note that we may consider global $L^{1}$ estimates here since there is no affine-linear noise present. In order to include affine-linear noise one would have to rely on $L_{loc}^{1}$ estimates as in Theorem \ref{thm:RP_limit}.%
}
\[
\begin{split}\int_{\R^{d}}|v^{1}(t,x)-v^{2}(t,x)|dx\le & \int_{\R^{d}}|u_{0}^{1}(x)-u_{0}^{2}(x)|dx+Te^{KT}\TV(u_{0}^{1})\rho_{p-var}(\mathbf{\mathbf{\mathbf{z}}}^{1},\mathbf{\mathbf{\mathbf{z}}}^{2}),\end{split}
\]
for all $t\in[0,T]$. Hence, by $R\to\infty$ and since $\psi^{\mathbf{\mathbf{\mathbf{z}}}^{i}}$ are volume preserving flows we have
\begin{align}
\sup_{t\in[0,T]}\|u^{1}(t)-u^{2}(t)\|_{L^{1}(\R^{d})}= & \sup_{t\in[0,T]}\|v^{1}(t,(\psi_{t}^{\mathbf{\mathbf{\mathbf{z}}}^{1}}){}^{-1})-v^{2}(t,(\psi_{t}^{\mathbf{\mathbf{\mathbf{z}}}^{2}})^{-1})\|_{L^{1}(\R^{d})}\nonumber \\
\le & \sup_{t\in[0,T]}\|v^{1}(t,(\psi_{t}^{\mathbf{\mathbf{\mathbf{z}}}^{1}}){}^{-1})-v^{2}(t,(\psi_{t}^{\mathbf{\mathbf{\mathbf{z}}}^{2}})^{-1})\|_{L^{1}(\R^{d})}\nonumber \\
 & +\sup_{t\in[0,T]}\|v^{1}(t,(\psi_{t}^{\mathbf{\mathbf{\mathbf{z}}}^{2}})^{-1})-v^{2}(t,(\psi_{t}^{\mathbf{\mathbf{\mathbf{z}}}^{2}})^{-1})\|_{L^{1}(\R^{d})}\label{eq:rate_0}\\
\le & \sup_{t\in[0,T]}\|v^{1}(t,(\psi_{t}^{\mathbf{\mathbf{\mathbf{z}}}^{1}}){}^{-1})-v^{1}(t,(\psi_{t}^{\mathbf{\mathbf{\mathbf{z}}}^{2}})^{-1})\|_{L^{1}(\R^{d})}\nonumber \\
 & +\|u_{0}^{1}-u_{0}^{2}\|_{L^{1}(\R^{d})}+Te^{KT}\TV(u_{0}^{1})\rho_{p-var}(\mathbf{\mathbf{\mathbf{z}}}^{1},\mathbf{\mathbf{\mathbf{z}}}^{2}).\nonumber 
\end{align}
We now aim to estimate the first term on the right hand side. To do so, we first replace $v^{1}$ by some smooth function $v\in(L^{1}\cap BV\cap C^{1})(\R^{d})$. Carefully choosing an approximating sequence for $v^{1}$ will then yield the required estimate. Using that $ $$\psi^{\mathbf{\mathbf{\mathbf{z}}}^{1}}$ is volume preserving and setting $\Phi_{t}=(\psi_{t}^{\mathbf{\mathbf{\mathbf{z}}}^{2}})^{-1}\circ\psi_{t}^{\mathbf{\mathbf{\mathbf{z}}}^{1}}$ we observe 
\begin{align}
\|v((\psi_{t}^{\mathbf{\mathbf{\mathbf{z}}}^{1}})^{-1})-v((\psi_{t}^{\mathbf{\mathbf{\mathbf{z}}}^{2}})^{-1})\|_{L^{1}(\R^{d})} & =\|v(Id)-v((\psi_{t}^{\mathbf{\mathbf{\mathbf{z}}}^{2}})^{-1}\circ\psi_{t}^{\mathbf{\mathbf{\mathbf{z}}}^{1}})\|_{L^{1}(\R^{d})}\nonumber \\
 & =\|\sum_{i=1}^{N}v(\Phi_{t_{i+1}})-v(\Phi_{t_{i}})\|_{L^{1}(\R^{d})}\label{eq:split-up}\\
 & \le\sum_{i=1}^{N}\|v(\Phi_{t_{i+1}})-v(\Phi_{t_{i}})\|_{L^{1}(\R^{d})}\nonumber 
\end{align}
and 
\begin{align}
 & \|v(\Phi_{t_{i+1}})-v(\Phi_{t_{i}})\|_{L^{1}(\R^{d})}\nonumber \\
 & =\int_{\R^{d}}|v(\Phi_{t_{i+1}}(x))-v(\Phi_{t_{i}}(x))|dx\nonumber \\
 & \le\int_{0}^{1}\int_{\R^{d}}|\nabla v(\l\Phi_{t_{i+1}}(x)+(1-\l)\Phi_{t_{i}}(x))||\Phi_{t_{i+1}}(x)-\Phi_{t_{i}}(x)|dxd\l\label{eq:rate_1-1-1}\\
 & \le\|\Phi_{t_{i+1}}-\Phi_{t_{i}}\|_{\infty}\int_{0}^{1}\int_{\R^{d}}|\nabla v(\l\Phi_{t_{i+1}}(x)+(1-\l)\Phi_{t_{i}}(x))|dxd\l,\nonumber 
\end{align}
for any partition $0=t_{0}\le t_{1}\le\dots\le t_{N}=t$. By Lemma \ref{lem:hoelder_rp} (cf. \cite[Lemma 13]{CDFO13} for its Hölder version) we have 
\[
\|\Phi\|_{C^{\frac{1}{p}-\text{Höl}}([0,T];C^{1}(\R^{d}))}\le K\rho_{p-var}(\mathbf{\mathbf{\mathbf{z}}}^{1},\mathbf{\mathbf{\mathbf{z}}}^{2}),
\]
We now aim to prove that 
\begin{equation}
x\mapsto\l\Phi_{t_{i+1}}(x)+(1-\l)\Phi_{t_{i}}(x)\label{eq:diff}
\end{equation}
is a diffeomorphism on $\R^{d}$. Since $\Phi_{t_{i}}$ is volume preserving we have 
\[
\det\left(D\Phi_{t_{i}}(x)\right)=1.
\]
Local Lipschitz continuity of the determinant mapping then implies
\begin{align*}
\det\left(\l D\Phi_{t_{i+1}}(x)+(1-\l)D\Phi_{t_{i}}(x)\right) & =\det\left(D\Phi_{t_{i}}(x)+\l(D\Phi_{t_{i+1}}(x)-D\Phi_{t_{i}}(x))\right)\\
 & \ge1-\l K|D\Phi_{t_{i+1}}(x)-D\Phi_{t_{i}}(x)|\\
 & \ge1-K\rho_{p-var}(\mathbf{\mathbf{\mathbf{z}}}^{1},\mathbf{\mathbf{\mathbf{z}}}^{2})|t_{i+1}-t_{i}|^{\frac{1}{p}}
\end{align*}
and
\begin{align*}
\|D\left[(\Phi_{t_{i+1}}-\Phi_{t_{i}})\circ\Phi_{t_{i}}^{-1}\right]\|_{\infty} & \le\|D\Phi^{-1}\|_{C^{0}([0,T]\times\R^{d})}\|\Phi\|_{C^{\frac{1}{p}-\text{Höl}}([0,T];C^{1}(\R^{d}))}|t_{i+1}-t_{i}|^{\frac{1}{p}}\\
 & \le K\rho_{p-var}(\mathbf{\mathbf{\mathbf{z}}}^{1},\mathbf{\mathbf{\mathbf{z}}}^{2})|t_{i+1}-t_{i}|^{\frac{1}{p}}.
\end{align*}
Hence, choosing $t_{i}=T\frac{i}{N}$ with 
\[
N:=\lceil T(2K)^{p}\rho_{p-var}^{p}(\mathbf{\mathbf{\mathbf{z}}}^{1},\mathbf{\mathbf{\mathbf{z}}}^{2})\rceil
\]
we have
\begin{align}
\det\left(\l D\Phi_{t_{i+1}}(x)+(1-\l)D\Phi_{t_{i}}(x)\right) & \ge\frac{1}{2}\label{eq:det_bound}
\end{align}
and 
\begin{align}
\|D\left[(\Phi_{t_{i+1}}-\Phi_{t_{i}})\circ\Phi_{t_{i}}^{-1}\right]\|_{\infty} & \le\frac{1}{2}.\label{eq:bdd}
\end{align}
Note that \eqref{eq:diff} is injective iff $Id+\l(\Phi_{t_{i+1}}-\Phi_{t_{i}})\circ\Phi_{t_{i}}^{-1}$ is. This easily follows from \eqref{eq:bdd} which proves that \eqref{eq:diff} is a diffeomorphism. Due to \eqref{eq:det_bound} we obtain
\begin{align*}
\|v(\Phi_{t_{i+1}})-v(\Phi_{t_{i}})\|_{L^{1}(\R^{d})} & \le2\|\Phi_{t_{i+1}}-\Phi_{t_{i}}\|_{\infty}\int_{\R^{d}}|\nabla v|dx.\\
 & \le K\rho_{p-var}(\mathbf{\mathbf{\mathbf{z}}}^{1},\mathbf{\mathbf{\mathbf{z}}}^{2})|t_{i+1}-t_{i}|^{\frac{1}{p}}\int_{\R^{d}}|\nabla v|dx.
\end{align*}
Using this in \eqref{eq:split-up} yields (note that $K$ is a generic constant)
\begin{align}
\|v((\psi_{t}^{\mathbf{\mathbf{\mathbf{z}}}^{1}})^{-1})-v((\psi_{t}^{\mathbf{\mathbf{\mathbf{z}}}^{2}})^{-1})\|_{L^{1}(\R^{d})} & \le NK\rho_{p-var}(\mathbf{\mathbf{\mathbf{z}}}^{1},\mathbf{\mathbf{\mathbf{z}}}^{2})\int_{\R^{d}}|\nabla v|dx.\label{eq:rate_2-1}\\
 & \le K\rho_{p-var}(\mathbf{\mathbf{\mathbf{z}}}^{1},\mathbf{\mathbf{\mathbf{z}}}^{2})\int_{\R^{d}}|\nabla v|dx.\nonumber 
\end{align}
We now aim to choose $v^{1,n}$ to be suitable approximations of $v^{1}(t)$ so that we may pass to the limit in \eqref{eq:rate_2-1}. Theorem \ref{thm:BV-bound} allows us to estimate $\TV(v^{1}(t))$ in terms of $\TV(u_{0}^{1})$. Since we will need the right hand side, i.e. $\int_{\R^{d}}|\nabla v^{1,n}(t,x)|dx$, to be uniformly bounded in $n$, the approximations $v^{1,n}$ have to be chosen with some care. The appropriate concept is given by intermediate convergence: Due to \cite[Theorem 10.1.2]{ABM06} we may choose smooth approximations $v^{1,n}\in(L^{1}\cap BV\cap C^{\infty})(\R^{d})$ such that
\begin{align*}
v^{1,n} & \to v^{1}(t)\quad\text{in }L^{1}(\R^{d})\\
\int_{\R^{d}}|\nabla v^{1,n}|dx & \to\TV(v^{1}(t))\quad\text{for }n\to\infty.
\end{align*}
Since $\psi_{t}^{\mathbf{z}^{1}},\psi_{t}^{\mathbf{z}^{2}}$ are volume preserving, this implies $v^{1,n}((\psi_{t}^{\mathbf{z}^{1}}){}^{-1})\to v^{1}(t,(\psi_{t}^{\mathbf{z}^{1}}){}^{-1})$ and $v^{1,n}((\psi_{t}^{\mathbf{z}^{2}}){}^{-1})\to v^{1}(t,(\psi_{t}^{\mathbf{z}^{2}}){}^{-1})$ in $L^{1}(\R^{d})$. Passing to the limit in \eqref{eq:rate_2-1} we obtain
\begin{align*}
\|v^{1}(t,(\psi_{t}^{\mathbf{z}^{1}}){}^{-1})-v^{1}(t,(\psi_{t}^{\mathbf{z}^{2}})^{-1})\|_{L^{1}(\R^{d})} & \le K\rho_{p-var}(\mathbf{\mathbf{z}^{1}},\mathbf{z}^{2})\TV(v^{1}(t)).
\end{align*}
Employing Theorem \ref{thm:BV-bound} to estimate $\TV(v^{1}(t))$ in terms of $\TV(u_{0}^{1})$ and inserting in \eqref{eq:rate_0} yields 
\begin{align}
 & \sup_{t\in[0,T]}\|u^{1}(t)-u^{2}(t)\|_{L^{1}(\R^{d})}\le\|u_{0}^{1}-u_{0}^{2}\|_{L^{1}(\R^{d})}+K\TV(u_{0}^{1})\rho_{p-var}(\mathbf{\mathbf{z}^{1}},\mathbf{z}^{2}).\label{eq:local_rate}
\end{align}

\end{proof}

\appendix

\section{A transformation formula for the divergence operator\label{app:div}}

For a $C^{1}$ matrix-valued function $F\in C^{1}(\R^{d};\R^{d\times d})$ we define the divergence to act column-wise, i.e.\ 
\[
(\div F)^{j}=\div F^{j}=\sum_{i=1}^{d}\partial_{i}F_{i}^{j}.
\]
 In case of $F=D\psi$ for a function $\psi\in C^{2}(\R^{d};\R^{d})$ this means that $\div D\psi$ is the row-vector %
\begin{comment}
 %\div (D\Psi)^i = \div (\partial_i \Psi) = \sum_{j=1}^d \partial_j \partial_i \Psi^j(x).$$
\end{comment}
\[
(\div D\psi)^{j}=\sum_{i=1}^{d}\partial_{i}(D\psi)_{i}^{j}=\sum_{i=1}^{d}\partial_{i}(\partial_{j}\psi^{i})=\sum_{i=1}^{d}\partial_{j}\partial_{i}\psi^{i}=\partial_{j}\div\psi.
\]

\begin{lem}
\label{lem:div_transf} Let $g\in C^{1}(\R^{d};\R^{d})$ and $\psi\in C^{2}(\R^{d};\R^{d})$. Then
\[
\begin{split}\div(g(\psi)) & =\div((D\psi)_{\psi^{-1}}g)(\psi)-\div((D\psi)_{\psi^{-1}})(\psi)g(\psi).\end{split}
\]
\end{lem}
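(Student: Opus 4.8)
The plan is to verify the identity pointwise, by a direct computation using only the chain rule and the product rule; there is no structural difficulty here, only careful index bookkeeping with the conventions of Section~\ref{sec:def_not} (namely $A_i^j=a_{i,j}$, divergence of a matrix field acting column-wise, and the Jacobian convention $(D\psi)_i^j=\partial_j\psi^i$, so that $\div D\psi=\nabla(\div\psi)$ as recorded above). Fix $x\in\R^d$, put $y=\psi(x)$ so that $\psi^{-1}(y)=x$, and recall that, $\psi$ being a $C^2$ diffeomorphism, $\psi^{-1}\in C^2$ and $D(\psi^{-1})(\psi(x))=(D\psi(x))^{-1}$ by the inverse function theorem.

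First I would expand the left-hand side by the chain rule,
\[
\div(g\circ\psi)(x)=\sum_{i}\partial_{x_i}\big[g^i(\psi(x))\big]=\sum_{i,k}(\partial_k g^i)(\psi(x))\,\partial_{x_i}\psi^k(x).
\]
Next I would expand $\div\big((D\psi)_{\psi^{-1}}g\big)$ evaluated at $y=\psi(x)$. With $h^i(y):=\big((D\psi)_{\psi^{-1}}g\big)^i(y)=\sum_j(\partial_j\psi^i)(\psi^{-1}(y))\,g^j(y)$, the product rule splits $\div h$ into a term carrying $\partial g$, namely $\sum_{i,j}(\partial_j\psi^i)(\psi^{-1}(y))\,(\partial_{y_i}g^j)(y)$, and a term carrying second derivatives of $\psi$, namely $\sum_{i,j}\partial_{y_i}\big[(\partial_j\psi^i)(\psi^{-1}(y))\big]\,g^j(y)$, in which a further chain rule produces the inverse-Jacobian factor $\partial_{y_i}(\psi^{-1})^l(y)$.

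The key observation is that both terms collapse once one evaluates at $y=\psi(x)$ and inserts $\partial_{y_i}(\psi^{-1})^l(\psi(x))=\big[(D\psi(x))^{-1}\big]_l^i$. The $\partial g$-term becomes, after relabelling of indices, exactly the chain-rule expression for $\div(g\circ\psi)(x)$ displayed above. The second-derivative term is, by the column-wise definition of the divergence of a matrix field applied to the matrix field $y\mapsto(D\psi)(\psi^{-1}(y))$, precisely $\big[\div\big((D\psi)_{\psi^{-1}}\big)\big](\psi(x))\cdot g(\psi(x))$. Adding the two identifications gives
\[
\div(g\circ\psi)(x)=\div\big((D\psi)_{\psi^{-1}}g\big)(\psi(x))-\big[\div\big((D\psi)_{\psi^{-1}}\big)\big](\psi(x))\cdot g(\psi(x)),
\]
which is the assertion; the special cases $\psi=\mathrm{Id}$ and $\psi$ linear serve as a sanity check.

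The only points that require attention are: contracting $D\psi$ and $(D\psi)^{-1}$ in the correct slots given the paper's index and transpose conventions; and observing that the inverse-Jacobian factors arising from differentiating $\psi^{-1}$ do not appear in the final formula, because they enter identically in the $\partial g$-term (which reassembles $\div(g\circ\psi)$) and in the second-derivative term (which reassembles $\div((D\psi)_{\psi^{-1}})$). An alternative route is to test against $\varphi\in C_c^\infty(\R^d)$ and change variables $y=\psi(x)$ in $\int\varphi\,\div(g\circ\psi)\,dx=-\int\nabla\varphi\cdot(g\circ\psi)\,dx$; this works too but introduces Jacobian determinants that must then be eliminated via Jacobi's formula, so the direct computation above is the most economical.
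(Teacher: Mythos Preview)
Your proof is correct and follows essentially the same route as the paper: a direct index computation using the chain rule on $\div(g\circ\psi)$ and the product rule on $\div((D\psi)_{\psi^{-1}}g)$, then matching terms. The paper organizes the product rule ``in reverse'' (starting from the left-hand side, writing $\partial_i\psi_j(x)=((\partial_i\psi_j)_{\psi^{-1}})(\psi(x))$ and splitting), whereas you expand the first right-hand-side term forward and identify its two pieces; this is purely cosmetic.

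One small remark: your discussion of inverse-Jacobian factors is a red herring. You never actually need to expand $\partial_{y_i}[(\partial_j\psi^i)(\psi^{-1}(y))]$ via the chain rule, nor to invoke $D(\psi^{-1})=(D\psi)^{-1}$. The second-derivative term $\sum_{i,j}\partial_{y_i}[(\partial_j\psi^i)(\psi^{-1}(y))]\,g^j(y)$ is, by the column-wise definition you cite, already $\div((D\psi)_{\psi^{-1}})(y)\cdot g(y)$ without further unpacking; and the $\partial g$-term contains no derivative of $\psi^{-1}$ at all once you evaluate at $y=\psi(x)$. So the claim that inverse-Jacobian factors ``enter identically'' in both terms and cancel is not quite how the argument runs---they simply never have to appear. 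This does not affect correctness, but you can shorten the write-up by dropping that paragraph.
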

\begin{proof}
We compute
\[
\begin{split}\div(g(\psi)) & =\sum_{i=1}^{d}\partial_{i}(g_{i}(\psi))=\sum_{i=1}^{d}(\nabla g_{i})(\psi)\cdot\partial_{i}\psi=\sum_{i,j=1}^{d}((\partial_{j}g_{i})(\partial_{i}\psi_{j})_{\psi^{-1}})(\psi)\\
 & =\sum_{i,j=1}^{d}\partial_{j}(g_{i}(\partial_{i}\psi_{j})_{\psi^{-1}})(\psi)-\sum_{i,j=1}^{d}(g_{i}\partial_{j}(\partial_{i}\psi_{j})_{\psi^{-1}})(\psi)\\
 & =\sum_{i,j=1}^{d}\partial_{j}((\partial_{i}\psi_{j})_{\psi^{-1}}g_{i})(\psi)-\sum_{i=1}^{d}g_{i}(\psi)\sum_{j=1}^{d}(\partial_{j}(\partial_{i}\psi_{j})_{\psi^{-1}})(\psi)\\
 & =\sum_{j=1}^{d}\partial_{j}((D\psi)_{\psi^{-1}}g)_{j}(\psi)-\sum_{i=1}^{d}g_{i}(\psi)\sum_{j=1}^{d}(\partial_{j}(D\psi_{j}^{i})_{\psi^{-1}})(\psi)\\
 & =\div((D\psi)_{\psi^{-1}}g)(\psi)-\div((D\psi)_{\psi^{-1}})(\psi)g(\psi)
\end{split}
\]
 \end{proof}
\begin{prop}
\label{prop:div_transf} Let $\psi\in C^{2}(\R^{d};\R^{d})$ be a volume-preserving diffeomorphism, i.e.\ $\det D\psi=1$. Then
\[
\div((D\psi)_{|\psi^{-1}})\equiv0.
\]
 Moreover, for $g\in C^{1}(\R^{d};\R^{d})$
\[
\begin{split}\div(g(\psi)) & =\div((D\psi)_{\psi^{-1}}g)(\psi).\end{split}
\]
\end{prop}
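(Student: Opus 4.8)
The plan is to deduce the second identity from the first by quoting Lemma~\ref{lem:div_transf}: once we know that $\div((D\psi)_{|\psi^{-1}})\equiv 0$, the correction term $\div((D\psi)_{\psi^{-1}})(\psi)\,g(\psi)$ in that lemma vanishes identically, and what remains is exactly $\div(g(\psi))=\div((D\psi)_{\psi^{-1}}g)(\psi)$. So the whole proposition reduces to the Piola-type identity $\div((D\psi)_{|\psi^{-1}})\equiv 0$ for a volume-preserving $C^{2}$ diffeomorphism $\psi$ of $\R^{d}$.

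First I would record the regularity we get for free: since $\psi\in C^{2}$ and $\det D\psi\equiv 1\neq 0$, the inverse function theorem gives $\psi^{-1}\in C^{2}(\R^{d};\R^{d})$, so each column vector field $A^{j}:=(D\psi)^{j}\circ\psi^{-1}$, whose $i$-th component is $(\partial_{j}\psi^{i})\circ\psi^{-1}$ (in the column-wise convention fixed at the start of the appendix), belongs to $C^{1}(\R^{d};\R^{d})$. Hence it suffices to show that the distributional divergence of each $A^{j}$ vanishes; since $A^{j}$ is $C^{1}$, this forces its classical divergence to vanish pointwise.

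Next I would test against an arbitrary $\vp\in C_{c}^{\infty}(\R^{d})$ and change variables $x=\psi(y)$, using the hypothesis $\det D\psi\equiv 1$ so that the Jacobian factor is $1$:
\begin{align*}
\int_{\R^{d}}\div(A^{j})(x)\,\vp(x)\,dx
&=-\int_{\R^{d}}\sum_{i=1}^{d}(\partial_{j}\psi^{i})(\psi^{-1}(x))\,\partial_{i}\vp(x)\,dx\\
&=-\int_{\R^{d}}\sum_{i=1}^{d}(\partial_{j}\psi^{i})(y)\,(\partial_{i}\vp)(\psi(y))\,dy
=-\int_{\R^{d}}\partial_{j}(\vp\circ\psi)(y)\,dy,
\end{align*}
the last equality being the chain rule. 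Since $\psi$ is a diffeomorphism of all of $\R^{d}$ and $\vp$ has compact support, $\vp\circ\psi\in C_{c}^{1}(\R^{d})$, so the final integral of a partial derivative of a compactly supported function over $\R^{d}$ is zero. Thus $\div(A^{j})\equiv 0$ for every $j$, i.e. $\div((D\psi)_{|\psi^{-1}})\equiv 0$, and the second identity follows from Lemma~\ref{lem:div_transf} as above.

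The argument is essentially routine; the only points needing care are the index bookkeeping matching the column-wise convention for the divergence of a matrix field, the verification that $\vp\circ\psi$ is compactly supported and $C^{1}$ (which uses that $\psi$ is a genuine diffeomorphism of $\R^{d}$, not just a local one), and the passage from the distributional to the pointwise statement, which is legitimate because $A^{j}\in C^{1}$. One could alternatively recognize $\div((D\psi)_{|\psi^{-1}})$ as the classical Piola identity applied to $\psi^{-1}$ and give a purely algebraic proof, but the change-of-variables computation above seems the shortest self-contained route.
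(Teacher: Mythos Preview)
Your proof is correct and uses essentially the same ingredients as the paper: an integration-by-parts plus change-of-variables computation (exploiting $\det D\psi=1$) combined with Lemma~\ref{lem:div_transf}. The only difference is the order: the paper first establishes the identity $\div(g(\psi))=\div((D\psi)_{\psi^{-1}}g)(\psi)$ in weak form (testing against $\vp$ with arbitrary $g\in C_c^\infty$) and then reads off $\div((D\psi)_{\psi^{-1}})\equiv 0$ by comparison with Lemma~\ref{lem:div_transf}, whereas you prove the Piola identity directly and then invoke the lemma to get the transformation formula; this is a cosmetic reordering rather than a different argument.
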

\begin{proof}
Let $g\in C_{c}^{\infty}(\R^{d};\R^{d})$ and $\vp\in C_{c}^{\infty}(\R^{d};\R)$. We compute %
\begin{comment}
\begin{proof}
%    \begin{equation}\begin{split}\label{eqn:div_loc}
%       (\div(g))(\Psi) = \div(g\circ\Psi D\Psi^{-1}_{|\Psi}) - (g(\Psi), \div(D\Psi^{-1}_{|\Psi}))_{\R^d}.
%    \end{split}\end{equation}
%   On the other hand
\end{proof}
\end{comment}
\[
\begin{split}\int_{\R^{d}}\div(g(\psi))\vp dx & =-\int_{\R^{d}}g(\psi)\cdot\nabla\vp dx\\
 & =-\int_{\R^{d}}g\cdot(\nabla\vp)(\psi^{-1})dx\\
 & =-\int_{\R^{d}}g\cdot(\nabla\vp(\psi^{-1}(\psi)))(\psi^{-1})dx.
\end{split}
\]
{} Since $\nabla\vp(\psi^{-1}(\psi))=(D\psi)^{t}(\nabla\vp(\psi^{-1}))(\psi)$ we conclude
\[
\begin{split}\int_{\R^{d}}\div(g(\psi))\vp dx & =-\int_{\R^{d}}g\cdot(D\psi)_{|\psi^{-1}}^{t}\nabla\vp(\psi^{-1})dx\\
 & =\int_{\R^{d}}\div((D\psi)_{|\psi^{-1}}g)\vp(\psi^{-1})dx\\
 & =\int_{\R^{d}}\div((D\psi)_{|\psi^{-1}}g)(\psi)\vp dx.
\end{split}
\]
 On the other hand $\div(g(\psi))=\div((D\psi)_{\psi^{-1}}g)(\psi)-\div((D\psi)_{\psi^{-1}})(\psi)g(\psi)$ by Lemma \ref{lem:div_transf}. Since $\vp$ and $g$ can be chosen arbitrarily this implies $\div((D\psi)_{\psi^{-1}})(\psi)\equiv0$.
\end{proof}

\section{Deterministic entropy solutions for hyperbolic conservation laws\label{sec:app_det_SCL}}

In this section we consider (deterministic) scalar conservation laws of the type 
\begin{equation}
\begin{split}\partial_{t}u+\Div f(t,x,u) & =F(t,x,u),\quad\text{on }[0,T]\times\R^{d}\\
u(0,x) & =u_{0}(x),\quad\text{on }\R^{d}.
\end{split}
\label{eqn:gen_SCL_app}
\end{equation}
Recall the definition of weak entropy solutions to \eqref{eqn:gen_SCL_app} from Section \ref{sec:def_not}. The main purpose of this section is the proof of a localized stability estimate for weak entropy solutions.

\subsection{Localized stability of entropy solutions to hyperbolic conservation laws}

In addition to the conditions put forward in Hypothesis \ref{hyp:H0-H2} we will require

\begin{hypothesis}\label{hyp:H4}
\begin{enumerate}
\item [($H4$)] For all $U,T>0$: 
\[
\int_{0}^{T}\int_{\R^{d}}\|(F-\div f)(t,x,\cdot)\|_{L^{\infty}([-U,U])}dxdt<\infty.
\]

\end{enumerate}
\end{hypothesis}

We now introduce some notation. For any function $u\in L^{\infty}([0,T]\times\R^{d})$ such that $t\mapsto u(t)$ is right-continuous in $L_{loc}^{1}(\R^{d})$ and $T>0$ we define 
\[
\begin{split}U_{t} & =\|u(t)\|_{L^{\infty}(\R^{d})},\\
\mcU & =\|u\|_{L^{\infty}([0,T]\times\R^{d})},\\
\mcS_{T}(u) & =\bigcup_{t\in[0,T]}\supp(u(t)),\\
\Sigma_{T}^{u} & =[0,T]\times\mcS_{T}(u)\times[-\mcU,\mcU],\\
\kappa_{0}^{*} & =(2d+1)\|\nabla\partial_{u}f\|_{L^{\infty}(\Sigma_{T}^{u};\R^{d\times d})}+\|\partial_{u}F\|_{L^{\infty}(\Sigma_{T}^{u})}.
\end{split}
\]
In the following we will assume that weak entropy solutions are right continuous as mappings from $[0,T]$ into $L_{loc}^{1}(\R^{d})$. Due to Proposition \ref{prop:general well posedness} (ii) such weak entropy solutions exist. From \cite[Theorem 2.2, Remark 2.3]{LM11} we recall
\begin{thm}
\label{thm:BV-bound}Assume $(H1)$, $(H2)$. Let $u_{0}\in(L^{\infty}\cap L^{1}\cap BV)(\R^{d})$ and let $u$ be a weak entropy solution of \eqref{eqn:gen_SCL_app}. Then $u$ satisfies $u(t)\in BV(\R^{d})$ for all $t\in[0,T]$ and 
\begin{align*}
\TV(u(t)) & \le\TV(u_{0})e^{\kappa_{0}^{*}t}\\
 & +C\int_{0}^{t}\int_{\R^{d}}e^{\kappa_{0}^{*}(t-r)}\|\nabla(F-\div f)(r,x,\cdot)\|_{L^{\infty}([-U_{r},U_{r}])}dxdr,
\end{align*}
for all $t\in[0,T]$ and some constant $C=C(d)=\frac{\pi}{2}d$.
\end{thm}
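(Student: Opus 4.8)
This statement is recalled from \cite{LM11}; here is the route one would take to prove it directly. The plan is \emph{not} to differentiate the equation --- the term $\partial_{x_i}\Div f$ produces a ``compression'' contribution that is quadratic in the (merely $BV$) first derivatives of $u$ and cannot be controlled pointwise --- but to work with spatial translates in the spirit of Kru\v{z}kov. The key observation is that for fixed $h\in\R^d$ the translate $u_h(t,x):=u(t,x+h)$ is again a weak entropy solution of \eqref{eqn:gen_SCL_app}, now with the \emph{shifted} data $\big(f(t,\,\cdot+h,\,\cdot),\ F(t,\,\cdot+h,\,\cdot),\ u_0(\cdot+h)\big)$ in place of $(f,F,u_0)$; hence controlling $\|u(t)-u_h(t)\|_{L^1(\R^d)}$ is precisely an $L^1$-stability statement for entropy solutions under a simultaneous perturbation of flux, source and initial datum.

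I would therefore run the Kru\v{z}kov doubling-of-variables argument comparing the entropy solution $u$ of $(f,F)$ with the entropy solution $u_h$ of $\big(f(t,\,\cdot+h,\,\cdot),F(t,\,\cdot+h,\,\cdot)\big)$, using test functions approximating $\mathbf 1_{\R^d}$ in space. After letting the doubling parameter and the spatial cut-off go to their limits, the mismatch of the two fluxes contributes an error dominated by $\int_0^t\!\int_{\R^d}\|\partial_u f(r,x,\cdot)-\partial_u f(r,x+h,\cdot)\|_{L^\infty}\,d|Du(r)|(x)\,dr\le|h|\,\|\nabla\partial_u f\|_{L^\infty(\Sigma_T^u)}\int_0^t\TV(u(r))\,dr$, while the mismatch of the effective sources $F-\div f$ contributes $\int_0^t\!\int_{\R^d}\|(F-\div f)(r,x,\cdot)-(F-\div f)(r,x+h,\cdot)\|_{L^\infty}\,dx\,dr\le|h|\int_0^t\!\int_{\R^d}\|\nabla(F-\div f)(r,x,\cdot)\|_{L^\infty([-U_r,U_r])}\,dx\,dr$; the $u$-derivatives $\nabla\partial_u f$ and $\partial_u F$ of the coefficients enter as growth rates, and keeping track of the precise combinatorics of those terms is what produces the rate $\kappa_0^*$ together with its factor $2d+1$. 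Taking $h=\eta\,\sigma$ with $\sigma\in S^{d-1}$, dividing by $|\eta|$ and letting $\eta\to0^+$ then yields, for every direction $\sigma$, a bound on the directional variation $|D_\sigma u(t)|(\R^d)$ (which in particular shows $u(t)\in BV(\R^d)$) whose right-hand side, after a crude bound, is independent of $\sigma$ except for the term $|D_\sigma u_0|(\R^d)$.

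To finish I would \emph{average over the sphere} rather than sum over coordinate directions: the identity $\TV(w)=c_d\int_{S^{d-1}}|D_\sigma w|(\R^d)\,d\sigma$ turns the $\sigma$-independent right-hand side into one whose initial-datum part is exactly $\TV(u_0)$ (coefficient one, as in the statement), the remaining $\sigma$-integration producing a dimensional constant that can be bounded by $C=C(d)=\tfrac{\pi}{2}d$. This leaves the scalar integral inequality
\begin{align*}
\TV(u(t))\le{}& \TV(u_0)\\
&+\int_0^t\Big(\kappa_0^*\,\TV(u(r))+C\!\int_{\R^d}\|\nabla(F-\div f)(r,x,\cdot)\|_{L^\infty([-U_r,U_r])}\,dx\Big)\,dr,
\end{align*}
(the $u$-derivatives of the coefficients and the $\|\nabla\partial_u f\|_{L^\infty(\Sigma_T^u)}\TV(u(r))$ flux contribution being exactly what is packed into $\kappa_0^*=(2d+1)\|\nabla\partial_u f\|_{L^\infty(\Sigma_T^u)}+\|\partial_u F\|_{L^\infty(\Sigma_T^u)}$), and Gronwall's lemma turns it into the asserted bound, with $e^{\kappa_0^*(t-r)}$ appearing as the Gronwall kernel.

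\textbf{The main obstacle} is rigour at the stated low regularity: $F-\div f$ and $\nabla(F-\div f)$ are assumed only $L^1$ in $(t,x)$ --- this $L^1$-in-time feature is the refinement over the classical Kru\v{z}kov/Lucier framework and is precisely what the analysis of \cite{LM11} supplies --- so both the doubling-of-variables manipulations above and the a~priori finiteness of $\TV(u(t))$ have to be handled with care. I would do this by first establishing the inequality for smooth, compactly supported data via vanishing viscosity, where all the manipulations are licit and $t\mapsto\TV(u^\varepsilon(t))$ is finite so that the Gronwall step genuinely closes, keeping every constant in the displayed form (hence uniform along the approximation), and then recovering the general case by lower semicontinuity of $\TV(\cdot)$ under $L_{loc}^1$-convergence together with the well-posedness and $L^1$-stability of entropy solutions (Proposition~\ref{prop:general well posedness}). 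A secondary technical point is to localise the cut-off functions so that the constants are measured on $\Sigma_T^u$ rather than on all of $[0,T]\times\R^d\times\R$.
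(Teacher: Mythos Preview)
The paper does not prove this theorem at all: it is simply recalled verbatim from \cite[Theorem 2.2, Remark 2.3]{LM11}, with no argument given. Your proposal correctly flags this at the outset and then sketches what is essentially the proof strategy of \cite{LM11} itself --- translate-and-compare via Kru\v{z}kov doubling, divide by $|h|$, average over $S^{d-1}$, close with Gronwall --- so there is nothing to compare on the paper's side.

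As a standalone sketch of the \cite{LM11} argument your outline is sound. The translate trick, the identification of the two error contributions (flux mismatch $\sim|h|\|\nabla\partial_u f\|_\infty\int\TV(u(r))\,dr$ and source mismatch $\sim|h|\int\|\nabla(F-\div f)\|_\infty$), the sphere-averaging device that fixes the coefficient of $\TV(u_0)$ at $1$ and produces the dimensional constant $C(d)=\tfrac{\pi}{2}d$, and the final Gronwall step are all the right ingredients, and your caveat about the $L^1$-in-time regularity being the genuine technical refinement over classical Kru\v{z}kov is well taken. One small point: you suggest establishing the inequality first for smooth compactly supported data via vanishing viscosity and then passing to the limit by lower semicontinuity of $\TV$. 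This is a legitimate route, but \cite{LM11} actually works directly at the level of entropy solutions (the doubling argument does not require $u$ to be smooth), which is slightly more economical and avoids having to check that the approximation scheme preserves the precise constants $\kappa_0^*$ and $C(d)$.
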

We will now recall and extend stability results for weak entropy solutions as obtained in \cite{LM11}. Let $u_{0},v_{0}\in L^{\infty}(\R^{d})$ with corresponding weak entropy solutions $u,v$ (right-continuous in $L_{loc}^{1}(\R^{d})$). We define 
\[
\begin{split}V_{t} & =\|u(t)\|_{L^{\infty}(\R^{d})}\vee\|v(t)\|_{L^{\infty}(\R^{d})}\\
\mcV & =\|u\|_{L^{\infty}([0,T]\times\R^{d})}\vee\|v\|_{L^{\infty}([0,T]\times\R^{d})}\\
\mcS_{T}(u,v) & =\bigcup_{t\in[0,T]}(\supp u(t)\cup\supp v(t))\\
\Sigma_{T}^{u,v} & =[0,T]\times\mcS_{T}(u,v)\times[-\mcV,\mcV]\\
\kappa^{*} & =\|\partial_{u}F\|_{L^{\infty}(\Sigma_{T}^{u,v})}+\|\partial_{u}\div(g-f)\|_{L^{\infty}(\Sigma_{T}^{u,v})}\\
M & =\|\partial_{u}g\|_{L^{\infty}([0,T]\times\R^{d}\times[-\mcV,\mcV]).}
\end{split}
\]

We prove a localized version of the stability estimate for scalar, inhomogeneous conservation laws obtained in \cite{LM11}. 
\begin{thm}
\label{thm:loc_entropy_stability}Let $(f,F)$, $(g,G)$ satisfy $(H1)$, $u_{0}\in(L^{\infty}\cap L^{1}\cap BV)(\R^{d}),v_{0}\in L^{\infty}(\R^{d})$ and let $u,v$ be two weak entropy solutions with respect to the initial conditions $u_{0},v_{0}$, the fluxes $f,g$ and forces $F,G$ respectively.
\begin{enumerate}
\item Suppose $(f,F)$ satisfies $(H2)$. Then 
\[
\begin{split} & \int_{B_{R}(x_{0})}|u(t,x)-v(t,x)|dx\\
 & \le e^{\kappa^{*}t}\int_{B_{R+Mt}(x_{0})}|u_{0}(x)-v_{0}(x)|dx\\
 & +\|\partial_{u}(f-g)\|_{L^{\infty}(\Sigma_{t}^{u}\cap(K_{R,M}(t,x_{0})\times\R))}\Bigg[\frac{e^{\kappa_{0}^{*}t}-e^{\kappa^{*}t}}{\kappa_{0}^{*}-\kappa^{*}}\TV(u_{0})\\
 & +C\int_{0}^{t}\frac{e^{\kappa_{0}^{*}(t-r)}-e^{\kappa^{*}(t-r)}}{\kappa_{0}^{*}-\kappa^{*}}\int_{\R^{d}}\|\nabla(F-\div f)(r,x,\cdot)\|_{L^{\infty}([-U_{r},U_{r}])}dxdr\Bigg]\\
 & +\int_{0}^{t}e^{\kappa^{*}(t-r)}\int_{B_{R+M(t-r)}(x_{0})}\|((F-G)-\div(f-g))(r,x\cdot)\|_{L^{\infty}([-V_{r},V_{r}])}dxdr,
\end{split}
\]
 for all \textup{$t\in[0,T],R>0,x_{0}\in\R^{d}$ and some constant $C=C(d)=\frac{\pi}{2}d$.}
\item Suppose $(f,F)$ satisfies $(H2^{*}),(H3)$. Then
\[
\begin{split} & \int_{B_{R}(x_{0})}|u(t,x)-v(t,x)|dx\\
 & \le e^{\kappa^{*}t}\int_{B_{R+Mt}(x_{0})}|u_{0}(x)-v_{0}(x)|dx\\
 & +\|\partial_{u}(f-g)\|_{L^{\infty}(\Sigma_{t}^{u}\cap(K_{R,M}(t,x_{0})\times\R))}\Bigg[\frac{e^{\kappa_{0}^{*}t}-e^{\kappa^{*}t}}{\kappa_{0}^{*}-\kappa^{*}}\TV(u_{0})\\
 & +C\int_{0}^{t}\frac{e^{\kappa_{0}^{*}(t-r)}-e^{\kappa^{*}(t-r)}}{\kappa_{0}^{*}-\kappa^{*}}\int_{B_{R+M(t-r)}(x_{0})}\|\nabla(F-\div f)(r,x,\cdot)\|_{L^{\infty}([-U_{r},U_{r}])}dxdr\Bigg]\\
 & +\int_{0}^{t}e^{\kappa^{*}(t-r)}\int_{B_{R+M(t-r)}(x_{0})}\|((F-G)-\div(f-g))(r,x\cdot)\|_{L^{\infty}([-V_{r},V_{r}])}dxdr,
\end{split}
\]
 for all \textup{$t\in[0,T],R>0,x_{0}\in\R^{d}$.}
\end{enumerate}
\end{thm}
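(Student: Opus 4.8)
The plan is to run the Kru\v{z}kov doubling-of-variables argument in the form developed in \cite{LM11}, but with test functions localized to the space-time cone $K_{R,M}(t,x_{0})$, so that finite speed of propagation is built in and every boundary contribution produced by the spatial cutoff carries a favourable sign. Throughout one treats $u$ (flux $f$, force $F$) as the ``$BV$ solution'' and $v$ (flux $g$, force $G$) as the reference, which is why the propagation speed is dictated by $g$, that is $M=\|\partial_{u}g\|_{\infty}$ as defined in the appendix.

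\textbf{Part (i).} Starting from the entropy inequalities for $u$ and $v$, I would double the space-time variables, test against $\zeta(\tfrac{t+s}{2},\tfrac{x+y}{2})$ times a standard product mollifier $\rho_{\varepsilon}(t-s)\rho_{\varepsilon}(x-y)$ with $\zeta$ a smooth approximation of $\mathbf 1_{K_{R,M}(t,x_{0})}$ mollified in time, and send $\varepsilon\to0$ to obtain a Kato-type inequality
\[
\frac{d}{dr}\int|u-v|\,\zeta(r,\cdot)\,dx\ \le\ \int|u-v|\big(\partial_{r}\zeta+M\,|\nabla\zeta|\big)\,dx+(\mathrm{I})+(\mathrm{II})+(\mathrm{III}),
\]
where the choice $M=\|\partial_{u}g\|_{\infty}$ makes the first bracket non-positive. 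Here $(\mathrm{I})$ collects the contributions Lipschitz in the unknown --- those controlled by $\|\partial_{u}F\|$ and $\|\partial_{u}\div(g-f)\|$ on $\Sigma_{T}^{u,v}$ --- so that $(\mathrm{I})\le\kappa^{*}\int|u-v|\zeta$ and a Gr\"onwall step produces the factor $e^{\kappa^{*}t}$. The term $(\mathrm{II})$ is where the spatial regularity of the flux enters: after an integration by parts in $x$ the flux-difference contribution splits into a piece carrying $\nabla u$, bounded by $\|\partial_{u}(f-g)\|_{L^{\infty}}\TV(u(r))$, and a piece carrying $\div(f-g)$ which is absorbed into $(\mathrm{III})$; inserting the bound on $\TV(u(r))$ from Theorem \ref{thm:BV-bound} and carrying out the Duhamel step, the time-convolution of $e^{\kappa_{0}^{*}r}$ with $e^{\kappa^{*}(t-r)}$ produces precisely the kernel $\tfrac{e^{\kappa_{0}^{*}t}-e^{\kappa^{*}t}}{\kappa_{0}^{*}-\kappa^{*}}$ (and its analogue under the remaining $r$-integral), the apparent singularity at $\kappa_{0}^{*}=\kappa^{*}$ being removable by continuity. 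Finally $(\mathrm{III})$ is the genuine mismatch of the effective sources $\|((F-G)-\div(f-g))(r,x,\cdot)\|_{L^{\infty}([-V_{r},V_{r}])}$, integrated over the shrinking balls $B_{R+M(t-r)}(x_{0})$ cut out by $\zeta$. Under $(H2)$ the $\TV$-bound of Theorem \ref{thm:BV-bound} is global, which is why in (i) the $\nabla(F-\div f)$ integral runs over all of $\R^{d}$.

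\textbf{Part (ii).} Now only $(H2^{*})$ and $(H3)$ are available, so Theorem \ref{thm:BV-bound} is not directly applicable; here I would argue by truncation, exploiting finite speed of propagation. Fix $t,R,x_{0}$, choose a smooth cutoff $\eta$ equal to $1$ on a neighbourhood of the projection of $K_{R,M}(t,x_{0})$ onto $\R^{d}$ and with compact support, and replace $(f,F)$ by $(\eta f,\ \eta F+\nabla\eta\cdot f)$, so that $F^{\eta}-\div f^{\eta}=\eta(F-\div f)$ is compactly supported in $x$ and the truncated data satisfy $(H1),(H2),(H3)$ ($(H3)$ being used to control $F-\div f$ on the bounded support of $\nabla\eta$); proceed likewise for $(g,G)$. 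Part (i) then applies to the entropy solutions $u^{\eta},v^{\eta}$ of the truncated problems, and applying the cone-localized Kato inequality to the pair $(u,u^{\eta})$, whose equations agree on $K_{R,M}(t,x_{0})$, gives $u^{\eta}\equiv u$ and, similarly, $v^{\eta}\equiv v$ on $K_{R,M}(t,x_{0})$ by finite speed of propagation. Since $\eta\equiv1$ there, all integrands coincide with the original ones on the relevant balls, and the $\TV$-term computed for the truncated data localizes to the cone because $\nabla(F^{\eta}-\div f^{\eta})$ is supported in $\supp\eta$ and equals $\nabla(F-\div f)$ inside the cone; this turns the global $\R^{d}$-integral of (i) into the $B_{R+M(t-r)}(x_{0})$-integral of (ii), yielding the claim.

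\textbf{Main obstacle.} The delicate step is the bookkeeping of the localized doubling: one must check that \emph{every} term produced by differentiating the space-time cutoff $\zeta$, including the time-mollification, is either sign-definite or absorbed by the $M|\nabla\zeta|$ term --- which is exactly what pins down $M=\|\partial_{u}g\|_{\infty}$ and forces a compatibility between the mollification scale and the slope of the cone --- and this is where the localization genuinely departs from \cite{LM11}. Secondary care points are the Duhamel computation producing the two-exponential kernel and, in part (ii), ensuring non-circularity: only part (i), which rests on $(H2)$ and on the existence statement of Proposition \ref{prop:general well posedness}(ii), is used, and the finite-propagation-speed statement for $u-u^{\eta}$ is read off directly from the entropy formulation rather than from any uniqueness result.
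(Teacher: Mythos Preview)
Your outline for part (i) matches the paper's proof: both follow the localized doubling argument of \cite{LM11}, the only modification being that the flux-difference term (the $K_2$ of \cite{LM11}) is estimated with $\|\partial_u(f-g)\|$ taken on the cone rather than globally, and that $(H4)$ is dispensed with because the source-difference term is already restricted to balls by the cutoff.

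For part (ii) your strategy differs from the paper's and, as written, has a gap. You truncate \emph{both} $(f,F)$ and $(g,G)$ and then argue that $u^{\eta}\equiv u$ and $v^{\eta}\equiv v$ on the cone. The second identification is the problem: the pair $(g,G)$ is only assumed to satisfy $(H1)$, so after truncation you do not get $(H3)$ (the bound on $\partial_u(\div g-G)$ is only on compact $u$-intervals, not on all of $\R$), and hence existence of $v^{\eta}$ via Proposition~\ref{prop:general well posedness}(ii) is not available. Even granting existence, you cannot invoke part (i) for the pair $(v,v^{\eta})$ because $v_0\in L^{\infty}$ only and part (i) requires the ``BV solution'' to have $u_0\in BV$; the ``similarly'' hides a genuinely separate finite-speed argument that you would have to supply.

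The paper avoids all of this by truncating \emph{only} $(f,F)$: with $\eta^{\varepsilon}$ a smooth cutoff satisfying $1_{K_{R,M}(t,x_0)}\le\eta^{\varepsilon}\le 1_{K_{R+\varepsilon,M}(t,x_0)}$, one sets $f^{\varepsilon}=\eta^{\varepsilon}f$, $F^{\varepsilon}=\eta^{\varepsilon}F+\nabla\eta^{\varepsilon}\cdot f$, notes $F^{\varepsilon}-\div f^{\varepsilon}=\eta^{\varepsilon}(F-\div f)$, and checks $(H1),(H2),(H3)$ for $(f^{\varepsilon},F^{\varepsilon})$ (here $(H3)$ for $(f,F)$ is genuinely used). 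One then applies part (i) twice, always with $u^{\varepsilon}$ in the BV slot: first to $(u^{\varepsilon},u)$, where all difference terms vanish on the cone and hence $u^{\varepsilon}=u$ on $B_R(x_0)$; then to $(u^{\varepsilon},v)$ directly, with the \emph{original} $(g,G)$ on the $v$-side. No $v^{\eta}$ is ever needed. Since $f^{\varepsilon}=f$ on $K_{R,M}(t,x_0)$ and $|F^{\varepsilon}-\div f^{\varepsilon}|\le|F-\div f|\,1_{K_{R+\varepsilon,M}}$, the global integrals from (i) localize to balls of radius $R+M(t-r)+\varepsilon$, and $\varepsilon\to0$ finishes. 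Your truncation of $(g,G)$ buys nothing and should simply be dropped.
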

\begin{proof}
(i): We are in the setting of \cite[Theorem 2.5]{LM11} except for Hypothesis \ref{hyp:H4} $(H4)$ which we do not assume. We essentially follow the same proof, except for the estimate on $K_{2}$ on page 752. We will therefore restrict to some comments on the modifications of the proof. In particular, we will employ the notations introduced in the proof of \cite[Theorem 2.5]{LM11}. We note that $\vp(r,x,s,y):=\Phi(r,x)\Psi(r-s,x-y)$, with $\Phi=\chi^{\ve}(r)\psi^{\t}(r,x)$ and $\Psi(r,x)=\nu^{\eta}(r)\mu^{\l}(x)$. Here $\chi^{\ve}(r)$, $\psi^{\t}(r,x)$ are appropriate approximations of $1_{[0,t]}$, $1_{K_{R,M}(T,x_{0})}$ and $\nu^{\eta},\mu^{\l}$ are standard Dirac sequences. In particular, we have
\begin{align*}
0 & \le\psi^{\t}(r,x)\le1_{K_{R+\t,M}(T,x_{0})}\\
0 & \le\chi^{\ve}(r)\le1_{[0,t+\ve]}
\end{align*}
and thus also 
\[
\supp\vp(\cdot,\cdot,s,y)\subseteq([0,t+\ve]\times\R^{d})\cap K_{R+\t,M}(T,x_{0}).
\]
Hence,
\begin{align*}
K_{2}\le & \int_{0}^{t+\ve+\eta}\int_{\R^{d}}\int_{\R_{+}}\|\partial_{u}(f-g)(r)\|_{L^{\infty}(\mcD\cap(B_{R+M(T-r)+\t}(x_{0})\times\R))}dr\\
 & \times\|\nabla u_{\b}(s,y)\|\nu(r-s)dydsdr\\
\le & \int_{0}^{t+\ve+\eta}\int_{\R_{+}}\|\partial_{u}(f-g)(r)\|_{L^{\infty}(\mcD\cap(B_{R+M(T-r)+\t}(x_{0})\times\R))}\\
 & \times\TV(\nabla u_{\b}(s))\nu(r-s)dsdr.
\end{align*}
All the other terms are estimated precisely as in \cite[Theorem 2.5]{LM11} and we may conclude the proof as in \cite[Theorem 2.5]{LM11}. Moreover, we note that the assumption Hypothesis \ref{hyp:H4} $(H4)$ supposed in \cite[Theorem 2.5]{LM11} is superfluous, since it is only required on balls $B_{R+M(T-r)}(x_{0})$ in the proof. On balls, however, it follows from the regularity assumptions on $F,G,f,g$ supposed in $(H1)$.

(ii): We now define a sequence of cut-off fluxes and sources: Let $\eta^{\ve}$ be a smooth cut-off function of $K_{R,M}(t,x_{0})$ satisfying
\[
1_{K_{R,M}(t,x_{0})}(r,x)\le\eta^{\ve}(r,x)\le1_{K_{R+\ve,M}(t,x_{0})}(r,x)
\]
and define 
\begin{align*}
f^{\ve}(r,x,u): & =f(r,x,u)\eta^{\ve}(r,x)\\
F^{\ve}(r,x,u): & =F(r,x,u)\eta^{\ve}(r,x)-f(r,x,u)\cdot\nabla\eta^{\ve}(r,x).
\end{align*}
Note 
\begin{equation}
F^{\ve}-\div f^{\ve}=(F-\div f)\eta^{\ve}\label{eq:cut-off_1}
\end{equation}
and thus
\begin{equation}
|F^{\ve}-\div f^{\ve}|\le|F-\div f|1_{K_{R+\ve,M}(t,x_{0})}.\label{eq:cut-off_2}
\end{equation}
Then $f^{\ve},F^{\ve}$ satisfy all the assumptions of case (i) as well as $(H3)$ with uniform bounds. By Proposition \ref{prop:general well posedness} (ii) there are unique weak entropy solutions $u^{\ve}$ to
\[
\begin{split}\partial_{t}u^{\ve}+\div f^{\ve}(t,x,u^{\ve}) & =F^{\ve}(t,x,u^{\ve})\\
u^{\ve}(0,x) & =u_{0}(x).
\end{split}
\]
The point of cutting-off $f,F$ is that now step (i) may be applied with $f,F$ replaced by $f^{\ve},F^{\ve}$. From (i) we then obtain (for $\ve>0$ small enough)
\[
\sup_{t\in[0,T]}\int_{B_{R}(x_{0})}|u^{\ve}(t,x)-u(t,x)|dx=0.
\]
From (i) we conclude 
\[
\begin{split} & \int_{B_{R}(x_{0})}|u(t,x)-v(t,x)|dx\\
 & =\int_{B_{R}(x_{0})}|u^{\ve}(t,x)-v(t,x)|dx\\
 & \le e^{\kappa^{*}t}\int_{B_{R+Mt}(x_{0})}|u_{0}(x)-v_{0}(x)|dx\\
 & +\|\partial_{u}(f^{\ve}-g)\|_{L^{\infty}(\Sigma_{t}^{u}\cap(K_{R,M}(t,x_{0})\times\R))}\Bigg[\frac{e^{\kappa_{0}^{*}t}-e^{\kappa^{*}t}}{\kappa_{0}^{*}-\kappa^{*}}\TV(u_{0})\\
 & +C\int_{0}^{t}\frac{e^{\kappa_{0}^{*}(t-r)}-e^{\kappa^{*}(t-r)}}{\kappa_{0}^{*}-\kappa^{*}}\int_{\R^{d}}\|\nabla(F^{\ve}-\div f^{\ve})(r,x,\cdot)\|_{L^{\infty}([-U_{r},U_{r}])}dxdr\Bigg]\\
 & +\int_{0}^{t}e^{\kappa^{*}(t-r)}\int_{B_{R+M(t-r)}(x_{0})}\|((F^{\ve}-G)-\div(f^{\ve}-g))(r,x\cdot)\|_{L^{\infty}([-V_{r},V_{r}])}dxdr.
\end{split}
\]
Since $f^{\ve}=f$ on $K_{R,M}(t,x_{0})$ and using \eqref{eq:cut-off_1}, \eqref{eq:cut-off_2} we obtain 
\[
\begin{split} & \int_{B_{R}(x_{0})}|u(t,x)-v(t,x)|dx\\
 & \le e^{\kappa^{*}t}\int_{B_{R+Mt}(x_{0})}|u_{0}(x)-v_{0}(x)|dx\\
 & +\|\partial_{u}(f-g)\|_{L^{\infty}(\Sigma_{t}^{u}\cap(K_{R,M}(t,x_{0})\times\R))}\Bigg[\frac{e^{\kappa_{0}^{*}t}-e^{\kappa^{*}t}}{\kappa_{0}^{*}-\kappa^{*}}\TV(u_{0})\\
 & +C\int_{0}^{t}\frac{e^{\kappa_{0}^{*}(t-r)}-e^{\kappa^{*}(t-r)}}{\kappa_{0}^{*}-\kappa^{*}}\int_{B_{R+M(t-r)+\ve}(x_{0})}\|\nabla(F-\div f)(r,x,\cdot)\|_{L^{\infty}([-U_{r},U_{r}])}dxdr\Bigg]\\
 & +\int_{0}^{t}e^{\kappa^{*}(t-r)}\int_{B_{R+M(t-r)}(x_{0})}\|((F-G)-\div(f-g))(r,x\cdot)\|_{L^{\infty}([-V_{r},V_{r}])}dxdr,
\end{split}
\]
Taking $\ve\to0$ implies the claim.
\end{proof}

\begin{cor}[Uniqueness of weak entropy solutions]
\label{cor:uniqueness_by_approx}Assume that $f,F$ satisfy $(H1)$, $(H2^{*})$ and that there is a weak entropy solution $u$ to \eqref{eqn:gen_SCL_app} obtained as the $L^{1}([0,T];L_{loc}^{1}(\R^{d}))$ limit of uniformly bounded weak entropy solutions $u^{\ve}$ to
\[
\begin{split}\partial_{t}u^{\ve}+\Div f^{\ve}(t,x,u^{\ve}) & =F^{\ve}(t,x,u^{\ve})\\
u^{\ve}(0,x) & =u_{0}^{\ve}(x)
\end{split}
\]
where $f^{\ve},F^{\ve}$ satisfy $(H1),(H2)$ and for \textup{all $R>0$ there is an $\ve>0$ such that} 
\begin{align*}
f^{\ve} & =f,\ F^{\ve}=F,\ u_{0}^{\ve}=u_{0}\quad\text{on }[0,T]\times B_{R}^{c}(0)\times\R.
\end{align*}
Then $u$ is the unique weak entropy solution to \eqref{eqn:gen_SCL_app}.\end{cor}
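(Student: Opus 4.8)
The plan is to compare an arbitrary competitor with the approximants $u^{\ve}$ on balls by means of the localized stability estimate Theorem~\ref{thm:loc_entropy_stability}(i), and then to let $\ve\to0$. First I would fix an arbitrary weak entropy solution $w$ of \eqref{eqn:gen_SCL_app} (right-continuous in $L_{loc}^{1}(\R^{d})$, with flux $f$, force $F$ and datum $u_{0}$); it suffices to prove $u=w$ a.e.\ on $[0,T]\times B_{\rho}(x_{0})$ for every $\rho>0$ and $x_{0}\in\R^{d}$. Let $\mcV<\infty$ be a common $L^{\infty}([0,T]\times\R^{d})$-bound for $w$ and for all the $u^{\ve}$ (available from the uniform bound in the hypothesis) and set $M:=\|\partial_{u}f\|_{L^{\infty}([0,T]\times\R^{d}\times[-\mcV,\mcV])}$, the relevant finite speed of propagation. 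Given $\rho$ and $x_{0}$, I would choose $R$ so large that the backward cone $K_{\rho,M}(T,x_{0})$ and its base ball $B_{\rho+MT}(x_{0})$ lie inside $[0,T]\times B_{R}(0)$, and then $\ve>0$ so small that $f^{\ve}=f$, $F^{\ve}=F$ and $u_{0}^{\ve}=u_{0}$ on $B_{R}(0)$; these identities then hold on $K_{\rho,M}(t,x_{0})$ and on $B_{\rho+Mt}(x_{0})$ for all $t\le T$.

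Next I would apply Theorem~\ref{thm:loc_entropy_stability}(i) with $u^{\ve}$ in the role of the first solution --- its flux/force $(f^{\ve},F^{\ve})$ satisfy $(H1),(H2)$ and its datum $u_{0}^{\ve}=u_{0}$ lies in $L^{\infty}\cap L^{1}\cap BV$ --- and $w$ in the role of the second --- flux/force $(f,F)$ satisfying $(H1)$, datum in $L^{\infty}$ --- so that all the hypotheses of part~(i) are fulfilled. Because $f^{\ve}=f$ and $F^{\ve}-\div f^{\ve}=F-\div f$ on $K_{\rho,M}(t,x_{0})$, and $u_{0}^{\ve}=u_{0}$ on $B_{\rho+Mt}(x_{0})$, every term on the right-hand side of the resulting inequality vanishes: the initial-data term is zero; the bracketed term carrying the prefactor $\|\partial_{u}(f^{\ve}-f)\|_{L^{\infty}(\Sigma_{t}^{u^{\ve}}\cap(K_{\rho,M}(t,x_{0})\times\R))}$ is zero, since that prefactor is zero while the bracket --- made of $\TV(u_{0})$ and $\int_{0}^{t}\!\int_{\R^{d}}\|\nabla(F^{\ve}-\div f^{\ve})(r,x,\cdot)\|_{L^{\infty}}\,dx\,dr$ --- is finite (the integral by $(H2)$ for $f^{\ve}$); and the last term, built from $(F^{\ve}-F)-\div(f^{\ve}-f)$ integrated over $B_{\rho+M(t-r)}(x_{0})$, is zero as well. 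Hence $\int_{B_{\rho}(x_{0})}|u^{\ve}(t,x)-w(t,x)|\,dx=0$ for all $t\in[0,T]$, i.e.\ $u^{\ve}=w$ a.e.\ on $[0,T]\times B_{\rho}(x_{0})$.

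Finally I would pass to the limit. The ball $B_{R}(0)$ on which the cut-off data coincide with the originals grows as $\ve\downarrow0$, so the identity $u^{\ve}=w$ on $[0,T]\times B_{\rho}(x_{0})$ holds for every sufficiently small $\ve$; combined with the convergence $u^{\ve}\to u$ in $L^{1}([0,T];L_{loc}^{1}(\R^{d}))$ this forces $u=w$ a.e.\ on $[0,T]\times B_{\rho}(x_{0})$, and since $\rho$ and $x_{0}$ are arbitrary, $u=w$ a.e.\ on $[0,T]\times\R^{d}$; as $w$ was an arbitrary weak entropy solution and $u$ is one, this yields the asserted uniqueness. The hard part is the vanishing of the right-hand side in the middle step: one has to keep track of the asymmetry in Theorem~\ref{thm:loc_entropy_stability} --- only the first solution is allowed to carry $(H2)$ and bounded variation, which dictates putting $u^{\ve}$, not $w$, in that slot --- and to use the finite-speed-of-propagation cone $K_{\rho,M}$ to see that the merely \emph{local} coincidence of $f^{\ve},F^{\ve},u_{0}^{\ve}$ with $f,F,u_{0}$ suffices to annihilate \emph{all} the error terms at once; the limiting argument is then routine, the only mild subtlety being that $\ve$ depends on the chosen ball, which is harmless in view of the local convergence $u^{\ve}\to u$.
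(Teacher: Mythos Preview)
Your proof is correct and follows essentially the same approach as the paper's: apply Theorem~\ref{thm:loc_entropy_stability}(i) with $u^{\ve}$ in the first slot (where $(H2)$ and $BV$ data are required) and an arbitrary weak entropy solution in the second, observe that all right-hand side terms vanish on any fixed cone once $\ve$ is small enough because the coefficients and data coincide there, and conclude via $u^{\ve}\to u$ in $L^{1}([0,T];L^{1}_{loc})$. Your write-up is more detailed---in particular you make explicit the asymmetry of Theorem~\ref{thm:loc_entropy_stability} and the $\ve$-independence of the propagation speed $M$---but the argument is the same.
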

\begin{proof}
Let $v$ be a weak entropy solution to \eqref{eqn:gen_SCL_app}. By Theorem \ref{thm:loc_entropy_stability} (applied with $f,F=f^{\ve},F^{\ve}$ and $g,G=f,F$) we have
\[
\begin{split} & \int_{B_{R}(x_{0})}|u^{\ve}(t,x)-v(t,x)|dx\\
 & \le e^{\kappa^{*}t}\int_{B_{R+Mt}(x_{0})}|u_{0}^{\ve}(x)-v_{0}(x)|dx\\
 & +\|\partial_{u}(f^{\ve}-f)\|_{L^{\infty}(K_{R,M}(t,x_{0})\times\R)}\Bigg[\frac{e^{\kappa_{0}^{*}t}-e^{\kappa^{*}t}}{\kappa_{0}^{*}-\kappa^{*}}\TV(u_{0}^{\ve})\\
 & +C\int_{0}^{t}\frac{e^{\kappa_{0}^{*}(t-r)}-e^{\kappa^{*}(t-r)}}{\kappa_{0}^{*}-\kappa^{*}}\int_{\R^{d}}\|\nabla(F^{\ve}-\div f^{\ve})(r,x,\cdot)\|_{L^{\infty}([-U_{r},U_{r}])}dxdr\Bigg]\\
 & +\int_{0}^{t}e^{\kappa^{*}(t-r)}\int_{B_{R+M(t-r)}(x_{0})}\|((F^{\ve}-F)-\div(f^{\ve}-f))(r,x\cdot)\|_{L^{\infty}([-V_{r},V_{r}])}dxdr.,
\end{split}
\]
with coefficients $\k_{0}^{*},\k^{*}$ possibly depending on $\ve$. Note that $M$, however, is independent of $\ve$ since $\sup_{\ve>0}\|u^{\ve}\|_{L^{\infty}([0,T]\times\R^{d})}\le C$. Choosing $\ve>0$ small enough we obtain
\[
\begin{split}\int_{0}^{T}\int_{B_{R}(x_{0})}|u^{\ve}(t,x)-v(t,x)|dxdt\le & 0.\end{split}
\]

\end{proof}
Condition $(H3)$ in Proposition \ref{prop:general well posedness} is required in order to obtain uniform bounds on the vanishing viscosity approximants used to construct weak entropy solutions. Since we will require uniform control on the $L^{\infty}$ norm of weak entropy solutions we note
\begin{lem}
\label{lem:linfty_bound}Assume that $f$, $F$ satisfy $(H1)$, $(H2^{*})$, $(H3)$, let $u_{0}\in(L^{\infty}\cap L^{1}\cap BV)(\R^{d})$, $u$ be the corresponding weak entropy solution to \eqref{eqn:gen_SCL_app} and define
\[
M:=\|(\div f-F)(\cdot,\cdot,0)\|_{L^{\infty}([0,T]\times\R^{d})}+\|\partial_{u}(\div f-F)\|_{L^{\infty}([0,T]\times\R^{d}\times\R)}.
\]
Then
\[
\|u\|_{L^{\infty}([0,T]\times\R^{d})}\le(\|u_{0}\|_{L^{\infty}(\R^{d})}+1)e^{2MT}.
\]
\end{lem}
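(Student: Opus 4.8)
The plan is to prove the bound first for the vanishing viscosity approximants and then pass to the limit, using that under $(H1),(H3)$ the weak entropy solution \emph{is} such a limit. The only algebraic input is a consequence of $(H3)$: by the mean value theorem in the third variable,
\[
|(F-\div f)(t,x,r)|\le\|(\div f-F)(\cdot,\cdot,0)\|_{L^{\infty}([0,T]\times\R^{d})}+\|\partial_{u}(\div f-F)\|_{L^{\infty}([0,T]\times\R^{d}\times\R)}\,|r|\le M(1+|r|)
\]
for all $t\in[0,T]$, $x\in\R^{d}$, $r\in\R$.

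By Proposition~\ref{prop:general well posedness}(ii) (which rests on \cite[Theorem~4]{K70}) the weak entropy solution $u$ of \eqref{eqn:gen_SCL_app} is obtained, along a subsequence, as the $L^{1}([0,T];L_{loc}^{1}(\R^{d}))$ and $dt\otimes dx$-a.e.\ limit of the classical solutions $u^{\ve}$ of the uniformly parabolic problems $\partial_{t}u^{\ve}+\Div f(t,x,u^{\ve})=\ve\D u^{\ve}+F(t,x,u^{\ve})$, $u^{\ve}(0)=u_{0}$; under the additional assumption $(H2^{*})$ this limit is moreover the unique weak entropy solution, hence coincides with the $u$ of the statement. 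So it suffices to bound $u^{\ve}$ uniformly in $\ve$. To this end I would compare $u^{\ve}$ with the spatially homogeneous barrier $\bar{u}(t):=(\|u_{0}\|_{L^{\infty}(\R^{d})}+1)e^{Mt}-1$, which satisfies $\bar{u}(0)=\|u_{0}\|_{L^{\infty}(\R^{d})}\ge u_{0}(x)$, $\bar{u}\ge0$, $\D\bar{u}=0$, and $\Div f(t,x,\bar{u})=(\div f)(t,x,\bar{u}(t))$ since the $\nabla\bar{u}$-term vanishes. Consequently
\[
\partial_{t}\bar{u}+\Div f(t,x,\bar{u})-\ve\D\bar{u}-F(t,x,\bar{u})=\dot{\bar{u}}+(\div f-F)(t,x,\bar{u})\ge M(1+\bar{u})-M(1+|\bar{u}|)=0,
\]
so $\bar{u}$ is a supersolution; symmetrically $-\bar{u}$ is a subsolution lying below $u_{0}$ at time $0$. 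The classical comparison principle for this quasilinear parabolic equation --- subtract the equations for $u^{\ve}$ and for the barrier, write the flux and source differences through the fundamental theorem of calculus (the resulting drift and zeroth order coefficient being bounded on $[0,T]\times\R^{d}$ by $(H1),(H3)$ and the boundedness of $u^{\ve}$), and apply Gronwall's lemma; cf.\ \cite{LSU67,K70} --- then gives $-\bar{u}\le u^{\ve}\le\bar{u}$ on $[0,T]\times\R^{d}$, i.e.
\[
\|u^{\ve}\|_{L^{\infty}([0,T]\times\R^{d})}\le\bar{u}(T)\le(\|u_{0}\|_{L^{\infty}(\R^{d})}+1)e^{MT}
\]
uniformly in $\ve$. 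Passing to the $dt\otimes dx$-a.e.\ limit $\ve\to0$ preserves this pointwise bound and yields $\|u\|_{L^{\infty}([0,T]\times\R^{d})}\le(\|u_{0}\|_{L^{\infty}(\R^{d})}+1)e^{MT}\le(\|u_{0}\|_{L^{\infty}(\R^{d})}+1)e^{2MT}$.

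The one delicate point is the comparison principle on the unbounded domain $\R^{d}$. Since $u^{\ve}$ is a priori bounded on $[0,T]\times\R^{d}$ by the construction in \cite{K70} and the barrier is spatially constant, one may either invoke the global parabolic maximum principle for bounded solutions directly, or localize via a standard exhaustion argument, controlling the contribution of large spheres using the finite propagation speed inherent in the estimate; this bookkeeping, rather than any conceptual difficulty, is the main obstacle. Note that the argument in fact produces the sharper constant $e^{MT}$, which a fortiori gives the asserted $e^{2MT}$.
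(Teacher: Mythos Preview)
Your proof is correct and follows essentially the same route as the paper: reduce to the vanishing viscosity approximants (the paper additionally spells out the cut-off and mollification steps in Kru\v{z}kov's construction, noting that $(H3)$ is preserved with the same $M$) and compare with spatially constant barriers governed by the bound $|(F-\div f)(t,x,r)|\le M(1+|r|)$. Your barrier $\bar u(t)=(\|u_0\|_\infty+1)e^{Mt}-1$ is in fact slightly sharper than the paper's $K(t)=(\|u_0\|_\infty+1)e^{2Mt}$, which is why you obtain $e^{MT}$ rather than $e^{2MT}$.
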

\begin{proof}
The weak entropy solution $u$ is constructed in \cite{K70} by first cutting-off $f,F$, then mollifying the coefficients and then applying a vanishing viscosity approximation. Since the conditions $(H1)$, $(H2^{*})$, $(H3)$ are preserved (with uniform bounds) under these cut-off and mollification procedures, it is enough to prove the claimed uniform bound on the level of the vanishing viscosity approximations
\begin{equation}
\begin{split}\partial_{t}u^{\ve}+\Div f(t,x,u^{\ve}) & =\ve\D u^{\ve}+F(t,x,u^{\ve})\\
u^{\ve}(0,x) & =u_{0}(x).
\end{split}
\label{eq:vanish_visc}
\end{equation}
Since comparison holds for \eqref{eq:vanish_visc} it is sufficient to construct appropriate sub- and supersolutions. For this we rewrite \eqref{eq:vanish_visc} in the form
\[
\partial_{t}u^{\ve}+\partial_{u}f(t,x,u^{\ve})\nabla u^{\ve}=\ve\D u^{\ve}+(F-\div f)(t,x,u^{\ve}).
\]
We set $M_{0}:=\|u_{0}\|_{\infty}$ and 
\[
K(t):=(M_{0}+1)e^{2Mt}.
\]
Then
\[
\partial_{u}f(t,x,K)\nabla K=\ve\D K=0
\]
and
\begin{align*}
(F-\div f)(t,x,K) & =(F-\div f)(t,x,K)-(F-\div f)(t,x,0)+(F-\div f)(t,x,0)\\
 & =\int_{0}^{K}\partial_{u}(F-\div f)(t,x,u)du+(F-\div f)(t,x,0)\\
 & \le(K+1)M.
\end{align*}
Since 
\begin{align*}
\partial_{t}K & =2M(M_{0}+1)e^{2Mt}\\
 & =M(M_{0}+1)e^{2Mt}+M(M_{0}+1)e^{2Mt}\\
 & \ge(K+1)M,
\end{align*}
we observe that $K$ is a supersolution to \eqref{eq:vanish_visc}. The construction of a subsolution proceeds analogously.
\end{proof}

\bibliographystyle{amsalpha.bst}
\bibliography{../../latex-refs/refs}

\begin{comment}
% expects file "refs.bib"

% \bibliography{refs}
\end{comment}

\end{document}